\newcommand\shorttitle{Determinantal structures in space inhomogeneous dynamics on interlacing arrays}
\newcommand\authors{Theodoros Assiotis}
\ifodd\value{page}
\authors
\shorttitle
\newtheorem{thm}{Theorem}[section]
\newtheorem{lem}[thm]{Lemma}
\newtheorem{defn}[thm]{Definition}
\newtheorem{rmk}[thm]{Remark}
\newtheorem{prop}[thm]{Proposition}
\title{\large \bf DETERMINANTAL STRUCTURES IN SPACE INHOMOGENEOUS DYNAMICS ON INTERLACING ARRAYS}
\author{\small THEODOROS ASSIOTIS}
\date{}
\begin{document}

\maketitle

\begin{abstract}
We introduce a space inhomogeneous generalization of the dynamics on interlacing arrays considered by Borodin and Ferrari in \cite{BorodinFerrari}. We show that for a certain class of initial conditions the point process associated to the dynamics has determinantal correlation functions and we calculate explicitly, in the form of a double contour integral, the correlation kernel for one of the most classical initial conditions, the densely packed. En route to proving this we obtain some results of independent interest on non-intersecting general pure-birth chains, that generalize the Charlier process, the discrete analogue of Dyson's Brownian motion. Finally, these dynamics provide a coupling between the inhomogeneous versions of the TAZRP and PushTASEP particle systems which appear as projections on the left and right edges of the array respectively.
\end{abstract}

\tableofcontents

\section{Introduction}

\subsection{Informal introduction}
The study of stochastic dynamics, in both discrete and continuous time, on interlacing arrays has seen an enormous amount of activity in the past two decades, see for example \cite{BorodinFerrari}, \cite{BorodinKuan}, \cite{BorodinOlshanski}, \cite{BorodinPetrov}, \cite{MacdonaldProcesses}, \cite{WarrenWindridge}, \cite{SchurDynamics}, \cite{StochasticVertex}, \cite{Warren} \cite{InterlacingDiffusions}, \cite{RandomGrowthKarlinMcGregor}, \cite{BorodinGorin}, \cite{BorodinGorinRains}, \cite{BorodinFerrariTilings}. These dynamics can equivalently be viewed as growth of random surfaces, see \cite{BorodinFerrari}, \cite{BorodinKuan}, \cite{RandomGrowthKarlinMcGregor} or as random fields of Young diagrams, see \cite{BufetovPetrov}, \cite{BufetovPetrovMucciconi}. Currently there are arguably three main approaches in constructing dynamics on interlacing arrays with some underlying integrability\footnote{By this rather vague term we mean that there exist useful explicit formulae for the expectations of at least some observables. See also the introductions of \cite{BufetovPetrov} and \cite{BufetovPetrovMucciconi} for a lengthier historical overview and comparison between the three approaches.}. 

The one that we will be concerned with in this contribution is due to Borodin and Ferrari\footnote{Thus the name "Borodin-Ferrari push-block dynamics" that we use throughout the paper.} \cite{BorodinFerrari} (see also the independent related work of Warren and Windridge \cite{WarrenWindridge} and Warren's Brownian analogue of the dynamics \cite{Warren}) based on some ideas from \cite{DiaconisFill}. This is the simpler out of the three approaches to describe (see Definition \ref{BFDynamics} for a precise description) and in some sense, see \cite{BufetovPetrov}, the one with "maximal noise". Many of the ideas and results from the important paper \cite{BorodinFerrari} have either directly generated or have been made use of in a very large body of work, see for example \cite{BorodinKuan}, \cite{MacdonaldProcesses}, \cite{StochasticVertex}, \cite{SchurDynamics}, \cite{BorodinOlshanski}, \cite{Duits}, \cite{BorodinPetrov}, \cite{qWhittakerProcesses}, \cite{Toninelli}, \cite{ChhitaFerrari}, \cite{BorodinGorin}, \cite{BorodinGorinRains}, \cite{BorodinFerrariTilings} for further developments and closely related problems. The other two approaches can be concisely described as follows: one of them (which is historically the first out of the three) is based on the combinatorial algorithm of the RSK correspondence, see \cite{JohanssonShapeFluctuations}, \cite{OConnellTams}, \cite{OConnellJPhys}, \cite{Toda} and the other, which has begun to develop very recently, is based on the Yang-Baxter equation, see \cite{BufetovPetrov}, \cite{BufetovPetrovMucciconi}.

Now, in the past few years, there has been considerable interest in constructing new integrable models in inhomogeneous space or adding spatial inhomogeneities, in a natural way, to existing models while preserving the integrability, see \cite{BorodinPetrovInhomogeneous}, \cite{KnizelPetrovSaenz}, \cite{Emrah}, \cite{EmrahThesis}, \cite{RandomGrowthKarlinMcGregor}. In this paper we do exactly that for the original (continuous time) dynamics of Borodin and Ferrari (see Definition \ref{BFDynamics}). We show that for a certain class of initial conditions the point process associated to the dynamics has determinantal correlation functions. We then calculate explicitly, in the form of a double contour integral, the correlation kernel for one of the most classical initial conditions, the densely packed. This allows one to address questions regarding asymptotics and it would be interesting to return to this in future work. Here, our focus is on developing the stochastic integrability aspects of the model.

Finally, the projections on the edges of the interlacing array give two Markovian interacting particle systems of independent interest, see Remark \ref{EdgeSystemsRemark} for more details. On the left edge we get the inhomogeneous TAZRP (totally asymmetric zero range process) or Boson particle system, see \cite{MacdonaldProcesses}, \cite{StochasticVertex}, \cite{qBoson} and on the right edge we get the inhomogeneous PushTASEP, see \cite{BorodinFerrari}, \cite{BorodinFerrariPushASEP}, \cite{BorodinPetrov}, which is also studied in detail in the independent work of Leonid Petrov \cite{PetrovInhomogeneousPushTASEP} which uses different methods\footnote{Petrov finds an integrable structure underlying PushTASEP by making a connection to the theory of Schur processes, see \cite{SchurMeasures}, \cite{SchurProcess}. He also performs some asymptotics. The overlap in terms of results and techniques between the two papers is minuscule.}.

In the next subsection we give the necessary background in order to introduce the model and state our main results precisely.
\subsection{Background and main result} 
We define the discrete Weyl chamber with non-negative coordinates:
\begin{align*}
\mathbb{W}^N=\{(x_1,\dots,x_N)\in \mathbb{Z}_+^N:x_1<\dots<x_N\},
\end{align*}
where $\mathbb{Z}_+=\{0,1,2,\dots\ \}$.

We think of the coordinates $x_i$ as positions of particles and will use this terminology throughout, see Figure \ref{GelfandTsetlinDynamics} for an illustration. We say that $y \in \mathbb{W}^N$ and $x \in \mathbb{W}^{N+1}$ interlace and write $y\prec x$ if:
\begin{align*}
 x_1 \le y_1 < x_2 \le \dots  <x_{N}\le y_N< x_{N+1}.
\end{align*}
We define the set of Gelfand-Tsetlin patterns (interlacing arrays) of length $N$ by\footnote{The definition of interlacing, and thus of Gelfand-Tsetlin patterns, is slightly different to the one used in \cite{BorodinFerrari}; more precisely the positions of inequalities $\le$ and strict inequalities $<$ are swapped. Clearly the two conventions are equivalent (after a simple shift).}:
\begin{align}
\mathsf{GT}_{N}=\big\{\left(x^{(1)},\dots,x^{(N)}\right): x^{(i)} \in \mathbb{W}^{i}, \ x^{(i)}\prec x^{(i+1)},\textnormal{ for } 1 \le i\le N-1 \big\}.
\end{align}
The basic data in this paper is a rate function 
\begin{align*}
\lambda:\mathbb{Z}_+\to (0,\infty)
\end{align*}
which we think of as the spatial inhomogeneity of the environment. It governs how fast or slow particles jump when at a certain position. We enforce the following assumption throughout the paper.
\begin{defn}\textsf{Assumption (UB)}. We assume that the rate function $\lambda :\mathbb{Z}_+\to (0,\infty)$ is uniformly bounded away from $0$ and $\infty$:
\begin{align}
0<s\overset{\textnormal{def}}{=}\inf_{x\ge 0}\lambda(x)\le \sup_{x\ge 0}\lambda(x)\overset{\textnormal{def}}{=}M <\infty.
\end{align}
\end{defn}

We now introduce the inhomogeneous space push-block dynamics in $\mathsf{GT}_N$. This is the continuous time Markov jump process in $\mathsf{GT}_N$ described as follows:

\begin{defn}\label{BFDynamics}\textsf{Borodin-Ferrari inhomogeneous space push-block dynamics}. Let $\lambda(\cdot)$ satisfy \textsf{(UB)}. Let $\mathsf{M}_N$ be the initial distribution (possibly deterministic) of particles on $\mathsf{GT}_{N}$. We now describe Markov dynamics in $\mathsf{GT}_N$ denoted by $\left(\mathsf{X}_N(t;\mathsf{M}_N);t\ge0\right)=\left(\left(\mathsf{X}^1(t),\dots,\mathsf{X}^N(t)\right);t\ge 0\right)$ where the projection on the $k^{th}$ level is given by $\left(\mathsf{X}^k(t);t\ge 0\right)=\left(\left(\mathsf{X}^{k}_1(t),\mathsf{X}^{k}_2(t),\dots,\mathsf{X}^{k}_k(t)\right);t\ge 0\right)$.

 Each particle has an independent exponential clock of rate $\lambda(\star)$ depending on its current position $\star \in \mathbb{Z}_+$ for jumping to the right by one to site $\star+1$. The particles interact as follows, see Figure \ref{GelfandTsetlinDynamics} for an illustration: if the clock of particle $\mathsf{X}_k^{n}$ rings first then it will attempt to jump to the right by one.
\begin{itemize}
\item (\textsf{Blocking}) In case $\mathsf{X}_k^{n-1}=\mathsf{X}_k^{n}$ the jump is blocked (since a move would break the interlacing; lower level particles can be thought of as heavier).
\item  (\textsf{Pushing}) Otherwise it moves by one to the right, possibly triggering instantaneously some pushing moves to maintain the interlacing. Namely, if the interlacing is no longer preserved with the particle labelled $\mathsf{X}_{k+1}^{n+1}$ then, $\mathsf{X}_{k+1}^{n+1}$ also moves instantaneously to the right by one and this pushing is propagated (instantaneously) to higher levels, if needed.
\end{itemize}

\end{defn}

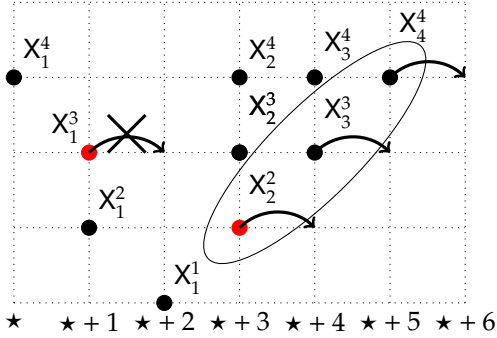
\begin{figure}
\captionsetup{singlelinecheck = false, justification=justified}
\begin{tikzpicture}
\draw[dotted] (0,0) grid (6,4);

\draw[fill] (2,0) circle [radius=0.1];
\node[above right] at (2,0) {$\mathsf{X}_1^{1}$};

\draw[fill] (1,1) circle [radius=0.1];
\node[above right] at (1,1) {$\mathsf{X}_1^{2}$};

\draw[fill,red] (3,1) circle [radius=0.1];
\node[above right] at (3,1.2) {$\mathsf{X}_2^{2}$};

\draw[fill] (4,2) circle [radius=0.1];
\node[above right] at (4,2.2) {$\mathsf{X}_3^{3}$};

\draw[fill] (5,3) circle [radius=0.1];
\node[above right] at (5,3.35) {$\mathsf{X}_4^{4}$};

\draw[fill] (3,2) circle [radius=0.1];
\node[above right] at (3,2.25) {$\mathsf{X}_2^{3}$};

\draw[fill] (3,2) circle [radius=0.1];
\node[above right] at (3,2.25) {$\mathsf{X}_2^{3}$};

\draw[fill] (4,3) circle [radius=0.1];
\node[above right] at (4,3.1) {$\mathsf{X}_3^{4}$};

\draw[fill] (3,3) circle [radius=0.1];
\node[above right] at (3,3) {$\mathsf{X}_2^{4}$};

\draw[fill] (0,3) circle [radius=0.1];
\node[above right] at (0,3) {$\mathsf{X}_1^{4}$};

\draw[fill,red] (1,2) circle [radius=0.1];
\node[above left] at (1,2) {$\mathsf{X}_1^{3}$};

\draw[rotate around={135:(4,2)}] (4,2) ellipse (0.6cm and 2cm);
\draw[->, very thick,] (3,1) to [out=45, in=135] (4,1);
\draw[->, very thick,] (4,2) to [out=45, in=135] (5,2);
\draw[->, very thick,] (5,3) to [out=45, in=135] (6,3);

\draw[->, very thick] (1,2) to [out=45, in=135] (2,2);
\draw[very thick] (1.75,2) to (1.25,2.5);
\draw[very thick] (1.25,2) to (1.75,2.5);

\node[below] at (0,0) {$\star$};
\node[below] at (1,0) {$\star+1$};
\node[below] at (2,0) {$\star+2$};
\node[below] at (3,0) {$\star+3$};
\node[below] at (4,0) {$\star+4$};
\node[below] at (5,0) {$\star+5$};
\node[below] at (6,0) {$\star+6$};

\end{tikzpicture}
\caption{A configuration of particles in $\mathsf{GT}_4$. If the clock of the particle labelled $\mathsf{X}_1^{3}$ rings, which happens at rate $\lambda(\star+1)$, then the move is blocked since interlacing with $\mathsf{X}_1^{2}$ would be violated. On the other hand, if the clock of the particle $\mathsf{X}_2^{2}$ rings, which happens at rate $\lambda(\star+3)$, then it jumps to the right by one and instantaneously pushes both $\mathsf{X}_3^{3}$ and $\mathsf{X}_4^{4}$ to the right by one as well, for otherwise the interlacing would break.}\label{GelfandTsetlinDynamics}
\end{figure}

\begin{rmk}\label{EdgeSystemsRemark} \textsf{Inhomogeneous Boson and PushTASEP}. It is easy to see that the particle systems at the left $\left(\left(\mathsf{X}^{1}_1(t),\mathsf{X}^{2}_1(t),\dots,\mathsf{X}^{N}_1(t)\right);t\ge 0\right)$ and right $\left(\left(\mathsf{X}^{1}_1(t),\mathsf{X}^{2}_2(t),\dots,\mathsf{X}^{N}_N(t)\right);t\ge 0\right)$ edge respectively in the $\mathsf{GT}_N$-valued dynamics of Definition \ref{BFDynamics} enjoy an autonomous Markovian evolution. 

The left edge process is called the inhomogeneous TAZRP (totally asymmetric zero range process) or Boson particle system, see \cite{MacdonaldProcesses}, \cite{StochasticVertex}, \cite{qBoson}. In particular, in \cite{qBoson} a contour integral expression (for a $q$-deformation of the model) is obtained for its transition probabilities. The fact that, as we shall also see in the sequel, the distribution of particles at fixed time $t\ge 0$ is a marginal of a determinantal point process (with explicit kernel) is essentially contained in the results of \cite{KnizelPetrovSaenz} (which makes use of different methods). 

The right edge particle system is called inhomogeneous PushTASEP, see \cite{BorodinFerrari}, \cite{BorodinFerrariPushASEP} and also \cite{BorodinPetrov} for a $q$-deformation of the homogeneous case. The fact that this particle system has an underlying determinantal structure is new, but is also obtained in the independent work of Petrov \cite{PetrovInhomogeneousPushTASEP} that uses different methods.

\end{rmk}

\begin{rmk}\label{LevelInhomogeneousRemark1} \textsf{A generalization of the model}. Borodin and Ferrari in fact considered a more general model where all particles at level $k$ jump at rate $\beta_k$ (independent of their position). Using the techniques developed in this paper we can study the following model that allows for level inhomogeneities as well \footnote{We believe (but do not have a rigorous argument) that this is the most general model of continuous time Borodin-Ferrari dynamics (namely the particle interactions being as in Definition \ref{BFDynamics}) with determinantal correlations and which can be treated with the methods developed here. It is plausible however that if one considers discrete time dynamics that there exists an even more general model in inhomogeneous space involving geometric or Bernoulli jumps, as is the case in the homogeneous setting, see \cite{BorodinFerrari}.}. Let $\{\alpha_i \}_{i\ge 1}$ be a sequence of numbers such that:
\begin{align*}
0<\inf_{i\ge 1}\inf_{x\ge0}\left(\alpha_i+\lambda(x)\right)\le \sup_{i\ge 1}\sup_{x\ge0}\left(\alpha_i+\lambda(x)\right)<\infty.
\end{align*}
The dynamics are as in Definition \ref{BFDynamics} with the modification that each of the particles at level $k$ jumps at rate $\alpha_k+\lambda(\star)$ depending on its position $\star$. Since our main motivation in this work is the introduction of spatial inhomogeneities we will only consider the level homogeneous case of Definition \ref{BFDynamics} in detail. However, in the sequence of Remarks \ref{LevelInhomogeneousRemark2}, \ref{LevelInhomogeneousRemark3}, \ref{LevelInhomogeneousRemark4}, \ref{LevelInhomogeneousRemark5} we indicate the essential modifications required at each stage of the argument to study the more general model.
\end{rmk}

Observe that, for any $n\le N$ the process described in Definition \ref{BFDynamics} restricted to $\mathsf{GT}_n$ is an autonomous Markov process. We consider the natural projections:
\begin{align*}
\pi_N^{N+1}:\mathsf{GT}_{N+1}\longrightarrow \mathsf{GT}_{N},\ \forall N\ge 1,
\end{align*}
forgetting the top row $x^{(N+1)}$ and we write:
\begin{align*}
\mathsf{GT}_{\infty}=\underset{\leftarrow}{\lim}\mathsf{GT}_N
\end{align*}
for the corresponding projective limit, consisting of infinite Gelfand-Tsetlin patterns. We say that $\{\mathsf{M}_N\}_{N\ge 1}$ is a consistent sequence of distributions on $\{\mathsf{GT}_N \}_{N\ge 1}$ if:
\begin{align*}
\left(\pi_N^{N+1}\right)_*\mathsf{M}_{N+1}=\mathsf{M}_N, \ \forall N\ge 1.
\end{align*}
Suppose we are given such a consistent sequence of distributions  $\{\mathsf{M}_N\}_{N\ge 1}$. Then, by construction since the projections on any sub-pattern are autonomous, the processes $\left(\mathsf{X}_N\left(t;\mathsf{M}_N\right);t\ge 0\right)_{N\ge 1}$ are consistent as well:
\begin{align*}
\left(\pi_N^{N+1}\right)_*\mathsf{Law}\left[\mathsf{X}_{N+1}(t;\mathsf{M}_{N+1})\right]=\mathsf{Law}\left[\mathsf{X}_N(t;\mathsf{M}_N)\right],  \ \ \forall t\ge 0, \ \forall N\ge 1,
\end{align*}
and we can correctly define $\left(\mathsf{X}_\infty\left(t;\{\mathsf{M}_N\}_{N\ge 1}\right);t\ge 0\right)$, the corresponding process on $\mathsf{GT}_{\infty}$.

Now, we will be mainly concerned with the so-called densely packed initial conditions $\{\mathsf{M}_N^{\mathsf{dp}}\}_{N\ge 1}$ defined as follows:
\begin{align*}
\mathsf{M}_N^{\mathsf{dp}}=\delta_{\left(0\prec (0,1)\prec (0,1,2)\prec \cdots \prec(0,1,\dots,N-1)\right)}.
\end{align*}
Clearly $\{\mathsf{M}_N^{\mathsf{dp}}\}_{N\ge 1}$ is a consistent sequence of distributions. We simply write $\left(\mathsf{X}_\infty(t);t\ge0\right)$ for the corresponding process on $\mathsf{GT}_{\infty}$. 

Observe that, $\mathsf{X}_\infty(t)$ for any $t\ge 0$ gives rise to a random point process on $\mathbb{N}\times \mathbb{Z}_+$ which we denote by $\mathsf{P}^t_{\infty}$. We will use the notation $z=(n,x)$ to denote the location of a particle, with $n$ being the level/height/vertical position while $x$ is the horizontal position. Finally, it will be convenient to introduce the following functions, which will play a key role in the subsequent analysis.

\begin{defn} For $x \in \mathbb{Z}_+$, we define:
\begin{align}
\psi_x(w)=\psi_x(w;\lambda)=\prod_{k=0}^{x}\frac{\lambda(k)}{\lambda(k)-w}, \ p_x(w)=p_x(w;\lambda)=\prod_{k=0}^{x-1}\frac{\lambda(k)-w}{\lambda(k)}, p_0(w)\equiv 1.
\end{align}
\end{defn}

\begin{rmk}
Clearly $\psi_x(w;\lambda)=1/ p_{x+1}(w;\lambda)$ but it is preferable to think of them as two distinct families of functions. Observe that $p_x(w)$ is a polynomial of degree $x$ and $p_x(0)=1$.
\end{rmk}

We have then arrived at our main result.

\begin{thm}\label{MainTheorem} Let $\lambda:\mathbb{Z}_+\to (0,\infty)$ satisfy $(\mathsf{UB})$. Consider the point process $\mathsf{P}^t_{\infty}$ on $\mathbb{N}\times \mathbb{Z}_+$ obtained from running the dynamics of Definition \ref{BFDynamics} for time $t\ge0$ starting from the densely packed initial condition. Then for all $t\ge 0$, $\mathsf{P}^t_{\infty}$ has determinantal correlation functions. Namely, for any $k\ge 1$ and distinct points $z_1=(n_1,x_1),\dots,z_k=(n_k,x_k)\in \mathbb{N}\times \mathbb{Z}_+$:
\begin{align}
\mathsf{P}^t_{\infty}\left(\textnormal{there exist particles at locations } z_1,\dots,z_k\right)=\det\left[\mathsf{K}_t(z_i,z_j)\right]_{i,j=1}^k,
\end{align}
where the correlation kernel $\mathsf{K}_{t}(\cdot,\cdot;\cdot,\cdot)$ is explicitly given by:
\begin{align}
\mathsf{K}_{t}\left(n_1,x_1;n_2,x_2\right)=&\frac{1}{\lambda(x_1)} \frac{1}{2\pi \i} \oint_{\mathsf{C_\lambda}}\psi_{x_1}(w)\frac{p_{x_2}(w)}{w^{n_2-n_1}}dw\mathbf{1}(n_2>n_1)\\
&-\frac{1}{\lambda(x_1)}\frac{1}{(2\pi \i)^2}\oint_{\mathsf{C}_\lambda}dw\oint_{\mathsf{C}_0}du\psi_{x_1}(w)p_{x_2}(u)e^{-t(w-u)}\frac{w^{n_1-n_2}}{u^{n_2}}\frac{w^{n_2}-u^{n_2}}{w-u}
\end{align}
where $\mathsf{C}_{\lambda}$ is a counter clockwise contour encircling $0$ and the points $\{\lambda(x) \}_{x\ge 0}$ while $\mathsf{C}_0$ is a small counter clockwise contour around $0$ as in Figure \ref{FigureContours}.
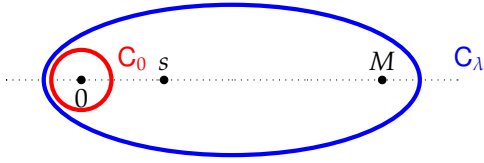
\begin{figure}
\captionsetup{singlelinecheck = false, justification=justified}
\begin{tikzpicture}
             \draw[dotted] (1,0) grid (7,0);
             \node[below] at (2,0) {$0$};
             \draw[fill] (2,0) circle [radius=0.05];
              \draw[ultra thick, red] (2,0) circle [radius=0.4];
              \node[above left,red] at (3,0) {$\mathsf{C}_{0}$};
              \node[above] at (3.1,0) {$s$};
              \draw[fill] (3.1,0) circle [radius=0.05];
              \node[above] at (6,0) {$M$};
              \draw[fill] (6,0) circle [radius=0.05];
              \node[above left,blue] at (7.5,0) {$\mathsf{C}_{\lambda}$};
             \draw[ultra thick,blue] (4,0) ellipse (2.5cm and 1cm);

 \end{tikzpicture}
 \caption{A possible choice of the contours $\mathsf{C}_0$ and $\mathsf{C}_{\lambda}$.}\label{FigureContours}
 \end{figure}
\end{thm}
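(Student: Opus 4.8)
The plan is to follow the by-now standard route for push-block type dynamics on interlacing arrays: (i) identify the dynamics on $\mathsf{GT}_N$ with the Diaconis–Fill intertwining construction applied to a single underlying one-dimensional chain, here the \emph{inhomogeneous pure-birth chain} with jump rate $\lambda(\cdot)$; (ii) deduce from this that the fixed-time distribution of $\mathsf{X}_N(t)$, pushed through the Lindström–Gessel–Viennot / Eynard–Mehta machinery, is a determinantal point process when the initial condition is nice; and (iii) compute the kernel for the densely packed start explicitly via a biorthogonalization, turning the finite $N$ answer into the stated double contour integral and then sending $N\to\infty$. Since the excerpt tells us the paper first develops results on non-intersecting pure-birth chains generalizing the Charlier process, I would lean on exactly those: the transition kernel of the chain with rate $\lambda$ has an eigenfunction expansion in terms of the functions $p_x(w)$ and $\psi_x(w)$ (note $p_x$ is the degree-$x$ polynomial with $p_x(\lambda(k))=0$ for $k<x$, and $e^{tw}$ are the time eigenfunctions), so that the Karlin–McGregor formula for $n$ non-colliding copies becomes, after a residue/contour manipulation, a product of single contour integrals around $\mathsf{C}_\lambda$.

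Concretely, the first step is to write down, following Borodin–Ferrari and Warren–Windridge, the Markov kernel on $\mathsf{GT}_N$ built from the one-level transition semigroup $P_t$ of the pure-birth chain and the interlacing "link" operators $\Lambda^{n+1}_n$; the push-block interaction in Definition \ref{BFDynamics} is precisely the one making $P_t^{(1)}\otimes\cdots$ intertwine consistently, i.e. $\Lambda^{n+1}_n P_t^{(n+1)} = P_t^{(n)}\Lambda^{n+1}_n$. Second, with the densely packed start $\delta_{(0,1,\dots,N-1)}$ at each level, the fixed-$t$ law on $\mathsf{GT}_N$ factorizes into a product of determinants (one from the non-colliding walks on the top level via Karlin–McGregor, one from the interlacing links written as determinants of indicator-type kernels), which is exactly the input for the Eynard–Mehta theorem: the correlation functions are determinantal with a kernel obtained by inverting a Gram-type matrix $M=[\phi_i(j)]$. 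Third, one diagonalizes this Gram matrix. Here the key computational input is that the relevant "$\phi$" functions and the relevant "time-evolved" functions both have clean contour-integral representations: the space variable $x$ enters through $\psi_x(w)$ or $p_x(w)$ (residues at the poles $\lambda(0),\dots,\lambda(x)$), and the level variable $n$ enters through powers $w^n$ (residue at $0$, reflecting the pure-birth / Charlier structure). Biorthogonality of $\{p_x\}$ against an appropriate measure supported at the $\lambda(k)$ is what makes $M^{-1}$ explicit; the $\mathbf 1(n_2>n_1)$ term is the "no-interaction" part of the Eynard–Mehta kernel and comes directly from the link operators.

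The hard part, I expect, is \textbf{Step 3 and the passage to $N=\infty$}: one must (a) correctly guess/verify the biorthogonal family and the measure against which $p_{x}$ are orthogonal — this is where Assumption (UB) is used, to control the location of the points $\{\lambda(k)\}$ inside $\mathsf{C}_\lambda$ and to get convergence of the products defining $\psi_x,p_x$ — and (b) show that the finite-$N$ kernel stabilizes as $N\to\infty$, uniformly enough to conclude determinantality of $\mathsf{P}^t_\infty$ on all of $\mathbb{N}\times\mathbb{Z}_+$; the densely packed initial condition is what makes the $N$-dependence drop out, since adding a new top level only appends already-occupied low sites. A secondary technical point is justifying the contour deformations that collapse the Karlin–McGregor multiple sum to the single integral over $\mathsf{C}_\lambda$ and that separate the two terms of $\mathsf{K}_t$; these are residue computations but require care about which poles ($0$ versus the $\lambda(k)$) are enclosed, and the prefactor $1/\lambda(x_1)$ is the telltale sign of the rate-weighting in the pure-birth generator. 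Once the kernel is in hand, checking it reproduces the known homogeneous Charlier-process kernel of \cite{BorodinFerrari} when $\lambda\equiv 1$ is a good consistency check and I would include it.
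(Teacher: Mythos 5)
Your high-level outline is the same as the paper's: pure-birth spectral expansion in terms of $p_x(w)$ and $\psi_x(w)$, intertwinings of Karlin--McGregor semigroups, Eynard--Mehta on a Gibbs measure, and a contour-integral biorthogonalization. But there is a genuine gap in how you get from Definition~\ref{BFDynamics} to the factorized (Gibbs) form that Eynard--Mehta consumes, and you misidentify the hard part.

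You write that ``the push-block interaction in Definition~\ref{BFDynamics} is precisely the one making $P_t^{(1)}\otimes\cdots$ intertwine consistently,'' and then jump to ``the fixed-$t$ law on $\mathsf{GT}_N$ factorizes into a product of determinants.'' That factorization is exactly what has to be \emph{proved}, and it does not follow from the one-level intertwining relation $\Lambda_{N}^{N+1}\mathcal{P}_t^{N+1}=\mathcal{P}_t^{N}\Lambda_{N}^{N+1}$ alone: one needs to show that the full multilevel evolved measure remains Gibbs (with explicit top marginal $\mathfrak{M}^N\mathfrak{P}_T^N$ and conditional laws $\mathfrak{L}_{k-1}^k$). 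The paper accomplishes this by constructing the two-level block-determinant transition kernel $\mathsf{U}_t^{N,N+1}$ (Definition~\ref{TwoLevelKM}), proving it is substochastic and solves the Kolmogorov equation for the two-level push-block dynamics, establishing the intertwinings $\Pi_{N}^{N+1}\mathcal{P}_t^N=\mathsf{U}_t^{N,N+1}\Pi_N^{N+1}$ and $\mathcal{P}_t^{N+1}\Lambda_N^{N+1}=\Lambda_N^{N+1}\mathsf{U}_t^{N,N+1}$, and then applying Rogers--Pitman to propagate the Gibbs property (Theorem~\ref{MarkovFunctionTheorem}, Proposition~\ref{ConsistentMultilevelProp}). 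The duality relation of Lemma~\ref{DualityLemma}, which replaces Siegmund duality from the homogeneous literature, is needed to even write down the $\mathsf{D}_t$ block. None of this is in your proposal, and it is the paper's main new construction; without it you have no justification that the time-$t$ law is of the Eynard--Mehta form.

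Two smaller points. First, the claim that ``$\{p_x\}$ are biorthogonal against a measure supported at the $\lambda(k)$'' is not what drives the computation; the actual biorthogonality (Lemma~\ref{AuxLemma4}) is a discrete sum over $x\ge 0$ of $\Psi_i^n(x)\Phi_j^n(x)$, and it is proved using the eigenfunction relation $\mathsf{L}_x p_x(u)=-u\,p_x(u)$ and the spectral expansion, not a measure on $\{\lambda(k)\}$. Second, the passage to $N=\infty$ is not the bottleneck: by construction the restriction of the dynamics to $\mathsf{GT}_n$ for $n\le N$ is autonomous, so the finite-$N$ kernel only depends on $n_1,n_2,x_1,x_2,t$ and is automatically consistent in $N$; there is no separate stabilization argument to run. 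The real technical work is in Section~2 (the block determinant and intertwinings) and in the contour estimates underlying Lemmas~\ref{AuxLemma1}--\ref{AuxLemma5}, in particular the telescoping identity used to establish (\ref{KeyEquality}) and the uniform bounds via the enlarged contour $\mathsf{C}_\lambda^{\ge R}$ that make the infinite sums converge.
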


\begin{rmk}
In the homogeneous case, $\lambda(\cdot)\equiv 1$, the correlation kernel in Theorem \ref{MainTheorem} above reduces to a kernel from \cite{BorodinFerrari} (there is a number of different kernels in \cite{BorodinFerrari} which give rise to equivalent determinantal point processes). To see this, it is most convenient to compare the expression for the kernel in Proposition 4.2 of \cite{BorodinFerrari} with the formulae from Section 3 herein (in particular see the expressions in display (4.4) in \cite{BorodinFerrari} and Lemmas \ref{AuxLemma1}, \ref{AuxLemma2} and display (\ref{PhiFunctionDefinition}) in this paper).
\end{rmk}

\subsection{Intermediate results and strategy of proof}

The proof of Theorem \ref{MainTheorem} essentially splits into two parts that we now elaborate on. Firstly showing that under certain initial conditions $\mathsf{M}_N$ on $\mathsf{GT}_N$ that we call Gibbs, of which the densely packed is a special case, the distribution of $\mathsf{X}_N\left(t;\mathsf{M}_N\right)$ for any $t\ge0$ is explicit (and again of Gibbs form), see Proposition \ref{ConsistentMultilevelProp}. By applying an extension of the famous Eynard-Mehta theorem \cite{EynardMehta} to interlacing particle systems, see \cite{BorodinRains}, \cite{BorodinFerrariPrahoferSasamoto}, it is then fairly standard, see Section 3.1, that under some (rather general) Gibbs initial conditions the point process associated to $\mathsf{X}_N\left(t;\mathsf{M}_N\right)$ has determinantal correlation functions (with a not yet explicit kernel).

A key ingredient in the argument for this first part is played by a remarkable block determinant kernel given in Definition \ref*{TwoLevelKM} that we call the two-level Karlin-McGregor formula (for the original, single level, Karlin-McGregor formula see \cite{KarlinMcGregorCoincidence}). This terminology is due to the fact that, as we see in Section 2 and in particular Theorem \ref{MarkovFunctionTheorem}, this provides a coupling between two Karlin-McGregor semigroups so that the corresponding processes interlace.

An instance of such a formula, in the context of Brownian motions, was first discovered by Warren in \cite{Warren}. Later it was understood that it can be further developed to include general one-dimensional diffusions and birth and death chains and this was achieved in \cite{InterlacingDiffusions} and \cite{RandomGrowthKarlinMcGregor} respectively. Part of our motivation for this paper was to enlarge the class of models for which such an underlying structure has been shown to exist\footnote{Currently this class has been shown to include essentially all examples of Borodin-Ferrari type dynamics in continuous time with determinantal correlations studied in the literature (both in continuous and discrete space, see \cite{InterlacingDiffusions} and \cite{RandomGrowthKarlinMcGregor} respectively). We expect such a formula to exist in the fully discrete setting (both time and space) as well and we plan to investigate this in future work. In this case, the Borodin-Ferrari dynamics give rise to shuffling algorithms for sampling random boxed plane partitions \cite{BorodinGorin},\cite{BorodinGorinRains} or domino tilings \cite{Nordenstam}, \cite{BorodinFerrariTilings}.}.

We note that, a crucial role in the previous works \cite{InterlacingDiffusions}, \cite{RandomGrowthKarlinMcGregor} was played by the notion of Siegmund duality, see \cite{Siegmund} (also \cite{KuanDuality1}, \cite{KuanDuality2} for other uses of duality in integrable probability) alongside with reversibility and their absence in the present setting is an important conceptual difference \footnote{In particular the statements of the results look different to the ones in \cite{InterlacingDiffusions} and \cite{RandomGrowthKarlinMcGregor}. More precisely, in \cite{InterlacingDiffusions} and \cite{RandomGrowthKarlinMcGregor} we obtain couplings between a Karlin-McGregor semigroup associated to a diffusion (or birth and death chain) and the one associated to (a Doob transform of) its Siegmund dual (which is in general different to the original process, except for a smaller sub-class of self-dual ones). In the present paper the couplings are between two Karlin-McGregor semigroup associated to a general pure-birth chain (and Doob transforms thereof), see Section 2 for more details.}. However, as it turns out an analogue of this duality suitable for our purposes does exist and is proven in Lemma \ref{DualityLemma}.

Moreover in Section 2 we prove some results on general non-intersecting pure-birth chains that generalize the Charlier process \cite{NonCollidingWalks}, \cite{BorodinFerrari}, the discrete analogue of Dyson's Brownian motion \cite{DysonBrownian}. In particular, we construct harmonic functions (and also more general eigenfunctions) that are given as determinants with explicit entries.

Finally, we should mention that all of these formulae have in some sense their origin in the study of coalescing stochastic flows of diffusions and Markov chains, see \cite{LeJan}. Recently a general abstract theory has been developed for constructing couplings between intertwined Markov processes based on random maps and coalescing flows, see \cite{Miclo}. It would be interesting to understand to what extent this is related to the present constructions and more generally to analogous couplings in integrable probability.

The second part of the proof involves the solution of a certain biorthogonalization problem which gives the explicit form of the correlation kernel $\mathsf{K}_t$ and is performed in Section 3. An important difference to the works \cite{BorodinFerrari}, \cite{BorodinKuan}, \cite{BorodinFerrariPrahoferSasamoto}, \cite{BorodinKuanPlancherel} where corresponding biorthogonalization problems were analysed is that the functions involved in the current problem are not translationally invariant in the spatial variable. This is where we make use of the functions $\{\psi_x \}_{x\in \mathbb{Z}_+}$ and $\{p_x\}_{x\in \mathbb{Z}_+}$ which arise in the spectral theory of a general pure-birth chain (see display (\ref{SpectralExpansion}) for the spectral expansion of the transition kernel of the chain in terms of them). These provide both intuition and also make most of the (otherwise quite tedious) computations rather neat, see in particular the sequence of Lemmas \ref{AuxLemma1}, \ref{AuxLemma2}, \ref{AuxLemma3}, \ref{AuxLemma4}, \ref{AuxLemma5} and their proofs.

\paragraph{Acknowledgements.} I would like to thank Maurice Duits, Patrik Ferrari and Jon Warren for some early discussions which motivated the problems studied in this paper. Moreover, I am very grateful to Leonid Petrov for sending me his preprint and for some interesting questions and remarks. I would also like to thank Alexei Borodin for some very interesting suggestions and pointers to the literature. Finally, I am very grateful to an anonymous referee for a careful reading of the paper and some very useful suggestions and remarks which have improved the presentation. The research described here was supported by ERC Advanced Grant  740900 (LogCorRM).

\section{Inhomogeneous space push-block dynamics}

\subsection{The one dimensional chain}
We define the forward and backwards discrete derivatives
\begin{align*}
\left[\nabla^+f\right](x)=f(x+1)-f(x) \ , \ \left[\nabla^-f\right](x)=f(x-1)-f(x).
\end{align*}
We define the following pure-birth chain which is the basic building block of our construction: this is a continuous time Markov chain on $\mathbb{Z}_+$ which when at site $x$ jumps to site $x+1$ with rate $\lambda(x)$. Its generator is then given by (the subscript indicates the variable on which it is acting):
\begin{align}
\mathsf{L}=\mathsf{L}_x^{\lambda}=\lambda(x)\nabla^+_x.
\end{align}
Due to \textsf{(UB)} non-explosiveness and thus existence and uniqueness of the pure-birth process is immediate (simply compare with a Poisson process with constant rate $M$). We write $\left(e^{t\mathsf{L}};t\ge 0\right)$ for the corresponding Markov semigroup and  abusing notation we also write $e^{t\mathsf{L}}(x,y)$ for its transition density, namely the probability a Markov chain with generator $\mathsf{L}$ goes from site $x$ to $y$ in time $t$. This is the unique solution to both the Kolmogorov backwards and forwards equations, see Section 2.6 of \cite{Norris}. The backwards equation, which we make use of here, reads as follows, for $t>0,x,y \in \mathbb{Z}_+$:
 \begin{align*}
\frac{d}{dt}u_t(x,y)&=\mathsf{L}_xu_t(x,y),\\ u_0(x,y)&=\mathbf{1}(x=y).
 \end{align*}
It is an elementary probabilistic argument that $e^{t\mathsf{L}}(x,y)$ is explicit, see Section 2 of \cite{qBoson} (or alternatively simply check that the expression below solves the Kolmogorov equation). We have the following spectral expansion for it, see display (2.1a) of \cite{qBoson}:
\begin{align}\label{SpectralExpansion}
e^{t\mathsf{L}}(x,y)=-\frac{1}{\lambda(y)}\frac{1}{2\pi \i} \oint_{\mathsf{C}_{\lambda}}\psi_y(w)p_x(w)e^{-tw}dw, \ t\ge 0, x,y\in \mathbb{Z}_+.
\end{align}

\begin{rmk}
Here we can simply pick any counter clockwise contour which encircles the points $\{\lambda(x)\}_{x\ge0}$ and not necessarily $0$ as well. The fact that $\mathsf{C}_{\lambda}$ encircles $0$ will be useful in the computation of the correlation kernel later on.
\end{rmk} 

Throughout the paper we use the notation $\mathbf{1}_{[\![ 0,y]\!]}(\cdot)$ for the indicator function of the set $\{0,1,2,\dots,y \}$. We have the following key relation for the transition density $e^{t\mathsf{L}}(x,y)$.

\begin{lem}\label{DualityLemma}
Let $x,y \in \mathbb{Z}_+$ and $t\ge 0$ we have:
\begin{align*}
e^{t\mathsf{L}}(x,y)=-\frac{\lambda(x)}{\lambda(y)}\nabla^+_xe^{t\mathsf{L}} \mathbf{1}_{[\![ 0,y]\!]}(x).
\end{align*}
\end{lem}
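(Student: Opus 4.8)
The plan is to verify the identity by comparing the spectral/contour-integral representations of both sides. Using the spectral expansion (\ref{SpectralExpansion}), the left-hand side is
\begin{align*}
e^{t\mathsf{L}}(x,y)=-\frac{1}{\lambda(y)}\frac{1}{2\pi\i}\oint_{\mathsf{C}_\lambda}\psi_y(w)p_x(w)e^{-tw}\,dw.
\end{align*}
For the right-hand side, first I would expand $e^{t\mathsf{L}}\mathbf{1}_{[\![0,y]\!]}(x)=\sum_{z=0}^{y}e^{t\mathsf{L}}(x,z)$, insert (\ref{SpectralExpansion}) for each term, and sum the geometric-type series $\sum_{z=0}^{y}\frac{1}{\lambda(z)}\psi_z(w)$ in closed form. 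Here I expect to use the telescoping identity that relates $\psi_z$ and $p_z$: since $\psi_z(w)=1/p_{z+1}(w)$ and $p_{z+1}(w)=p_z(w)\cdot\frac{\lambda(z)-w}{\lambda(z)}$, one gets $\frac{1}{\lambda(z)}\psi_z(w)=\frac{1}{\lambda(z)}\frac{1}{p_{z+1}(w)}$, and the partial sums of these should collapse to something proportional to $\psi_y(w)/w$ (up to a $w$-independent constant that drops out because $\oint_{\mathsf{C}_\lambda}p_x(w)e^{-tw}\,dw/w$ vanishes only if... — actually one must be careful, see below). Concretely I anticipate an identity of the shape $\sum_{z=0}^{y}\frac{1}{\lambda(z)}\psi_z(w)=\frac{1}{w}\big(\psi_y(w)-1\big)$ or similar, which one proves by induction on $y$ using the recursion $\psi_{z}(w)=\psi_{z-1}(w)\frac{\lambda(z)}{\lambda(z)-w}$.

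Next I would apply $\nabla^+_x$, which acts only through $p_x(w)$: using $p_{x+1}(w)-p_x(w)=p_x(w)\big(\frac{\lambda(x)-w}{\lambda(x)}-1\big)=-\frac{w}{\lambda(x)}p_x(w)$, so $[\nabla^+_x p_\bullet(w)](x)=-\frac{w}{\lambda(x)}p_x(w)$. Therefore
\begin{align*}
-\frac{\lambda(x)}{\lambda(y)}\nabla^+_x e^{t\mathsf{L}}\mathbf{1}_{[\![0,y]\!]}(x)
=-\frac{\lambda(x)}{\lambda(y)}\cdot\Big(-\frac{1}{2\pi\i}\Big)\oint_{\mathsf{C}_\lambda}\Big(\sum_{z=0}^{y}\tfrac{1}{\lambda(z)}\psi_z(w)\Big)\Big(-\frac{w}{\lambda(x)}\Big)p_x(w)e^{-tw}\,dw,
\end{align*}
and substituting the closed form for the inner sum I expect the factor $w$ to cancel the $1/w$, the $\lambda(x)$ factors to cancel, and the "$-1$" term in $\psi_y(w)-1$ to integrate to zero (since $\oint_{\mathsf{C}_\lambda}p_x(w)e^{-tw}\,dw=0$, the integrand being entire — this is the only analyticity input and it is immediate because $p_x$ is a polynomial and $e^{-tw}$ is entire). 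What remains is exactly $-\frac{1}{\lambda(y)}\frac{1}{2\pi\i}\oint_{\mathsf{C}_\lambda}\psi_y(w)p_x(w)e^{-tw}\,dw=e^{t\mathsf{L}}(x,y)$, as desired.

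The main obstacle I foresee is pinning down the exact closed form of the partial sum $\sum_{z=0}^{y}\frac{1}{\lambda(z)}\psi_z(w)$ and making sure the boundary/constant terms are handled correctly — in particular confirming that the stray constant contributes nothing after integration against $\mathsf{C}_\lambda$, and checking the $t=0$ and small-$y$ base cases of the induction by hand. An alternative, cleaner route that avoids contour manipulations entirely would be a direct probabilistic argument: interpret $e^{t\mathsf{L}}\mathbf{1}_{[\![0,y]\!]}(x)=\mathbb{P}_x(\text{chain is in }[\![0,y]\!]\text{ at time }t)=\mathbb{P}_x(\text{chain has made} \le y-x \text{ jumps by time }t)$ for $x\le y$, express this via the jump times (sums of exponentials of rates $\lambda(x),\lambda(x+1),\dots$), and differentiate the resulting telescoping sum in $x$; the factor $\lambda(x)/\lambda(y)$ should emerge from the rate of the $x$-th clock versus the $y$-th. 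I would present the contour-integral proof as primary since the spectral machinery is already set up, but mention the probabilistic interpretation as motivation for why such a Siegmund-type duality holds.
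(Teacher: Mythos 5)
Your proof is correct, but it takes a genuinely different route from the one the paper actually uses. The paper's own proof avoids the spectral expansion entirely: it defines $s_t(x,y)=-\frac{\lambda(x)}{\lambda(y)}\nabla^+_xe^{t\mathsf{L}}\mathbf{1}_{[\![0,y]\!]}(x)$, checks the $t=0$ initial condition from the elementary identity $\nabla^+_x\mathbf{1}(x\le y)=-\mathbf{1}(x=y)$, verifies that $\frac{d}{dt}s_t(x,y)=\mathsf{L}_xs_t(x,y)$ by commuting $\nabla^+_x$ with $\frac{d}{dt}$ and using the backward equation for $e^{t\mathsf{L}}$, and then concludes by uniqueness of solutions to the Kolmogorov backward equation. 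This is a one-paragraph argument requiring no contour manipulations at all. Your contour-integral route is precisely the alternative the author flags in the Remark immediately following the lemma (``It is also possible to prove Lemma~\ref{DualityLemma} using the explicit spectral expansion (\ref{SpectralExpansion}), the argument is similar to the one in the proof of Lemma~\ref{AuxLemma1}''), and you have executed it correctly: the telescoping identity $\sum_{z=0}^{y}\frac{1}{\lambda(z)}\psi_z(w)=\frac{1}{w}(\psi_y(w)-1)$ that you anticipate is exactly the finite-sum version of display (\ref{KeyEquality}) proved inside Lemma~\ref{AuxLemma1}, the computation $\nabla^+_x p_x(w)=-\frac{w}{\lambda(x)}p_x(w)$ is right (this is just the eigenfunction property of $p_\cdot(w)$ under $\mathsf{L}$, used again in the proof of Lemma~\ref{AuxLemma4}), and your observation that the stray $-1$ kills cleanly because $\oint_{\mathsf{C}_\lambda}p_x(w)e^{-tw}\,dw=0$ for an entire integrand is the correct way to close the gap you flagged. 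The paper's Kolmogorov-equation argument is shorter and more conceptual, and it does not presuppose the explicit form of $e^{t\mathsf{L}}(x,y)$; your spectral version is more computational but has the advantage of exhibiting the identity directly in the contour-integral calculus that Section~3 is built on, making it feel less like a miracle when the same manipulations reappear in Lemmas~\ref{AuxLemma1}--\ref{AuxLemma3}. Your closing probabilistic heuristic is a reasonable piece of motivation but is not developed; if you wanted to make it rigorous you would be re-deriving the backward equation anyway, which is what the paper's proof actually uses.
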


\begin{proof}
We let
\begin{align*}
s_t(x,y)=-\frac{\lambda(x)}{\lambda(y)}\nabla^+_xe^{t\mathsf{L}}\mathbf{1}_{[\![ 0,y]\!]}(x)
\end{align*}
and show that this solves the Kolmogorov backwards equation. By uniqueness the statement follows. The $t=0$ initial condition follows from:
\begin{align*}
\nabla_x^+\mathbf{1}(x\le y)=\mathbf{1}(x+1\le y)-\mathbf{1}(x\le y)=-\mathbf{1}(x=y).
\end{align*}
Finally,
\begin{align*}
\frac{d}{dt}s_t(x,y)=-\frac{\lambda(x)}{\lambda(y)}\nabla^+_x\frac{d}{dt}e^{t\mathsf{L}}\mathbf{1}_{[\![0,y]\!]}(x)=-\frac{\lambda(x)}{\lambda(y)}\nabla^+_x\lambda(x)\nabla^+_xe^{t\mathsf{L}}\mathbf{1}_{[\![ 0,y]\!]}(x)=\mathsf{L}_xs_t(x,y)
\end{align*}
as required.
\end{proof}

\begin{rmk}
It is also possible to prove Lemma \ref{DualityLemma} using the explicit spectral expansion (\ref{SpectralExpansion}), the argument is similar to the one in the proof of Lemma \ref{AuxLemma1}.
\end{rmk}

\subsection{Two level dynamics}
We begin with a classical definition due to Karlin and McGregor \cite{KarlinMcGregorCoincidence}.  
\begin{defn} The Karlin-McGregor sub-Markov semigroup on $\mathbb{W}^N$ associated to a pure-birth chain with generator $\mathsf{L}$ is defined by its transition density given by, for $t\ge 0, x,y \in \mathbb{W}^N$: 
\begin{align*}
\mathcal{P}_t^N\left(x,y\right)=\det\left(e^{t\mathsf{L}}(x_i,y_j)\right)_{i,j=1}^N.
\end{align*}
\end{defn}
This semigroup has the following probabilistic interpretation: it corresponds to $N$ independent copies of a chain with generator $\mathsf{L}$ killed when they intersect, see \cite{KarlinMcGregorCoincidence}, \cite{Karlin}. A conditioned upon non-intersection version of this semigroup will govern the dynamics on projections on single levels of our interlacing arrays, see Theorem \ref{MarkovFunctionTheorem} and Proposition \ref{ConsistentMultilevelProp} below.

We now move on to study two-level dynamics. We first consider the space of pairs $(y,x)$ which interlace:
\begin{align*}
\mathbb{W}^{N,N+1}&=\{(y,x)=(y_1,\dots,y_N,x_1,\dots,x_{N+1})\in \mathbb{Z}_+^{2N+1}: x_1 \le y_1 < x_2 \le \cdots \le y_N< x_{N+1}\}.
\end{align*}
We have the following key definition that we call the two-level Karlin-McGregor formula, since as we shall see in the sequel this provides a coupling for two Karlin-McGregor semigroups, so that the corresponding processes interlace.

\begin{defn}\label{TwoLevelKM}
For $(y,x),(y',x')\in \mathbb{W}^{N,N+1}$ and $t\ge0$, define $ \mathsf{U}_t^{N,N+1}\left[(y,x),(y',x')\right]$ by the following $(2N+1)\times (2N+1)$ block matrix determinant:
\begin{align}
 \mathsf{U}_t^{N,N+1}\left[(y,x),(y',x')\right]=\det\
  \begin{pmatrix}
 \mathsf{A}_t(x,x') & \mathsf{B}_t(x,y')\\
   \mathsf{C}_t(y,x') & \mathsf{D}_t(y,y') 
  \end{pmatrix},
\end{align}
where the matrices $\mathsf{A}_t,\mathsf{B}_t, \mathsf{C}_t, \mathsf{D}_t$ of sizes $(N+1)\times (N+1)$, $(N+1)\times N$, $N\times (N+1)$ and $N\times N$ respectively are given by:
\begin{align*}
\mathsf{A}_t(x,x')_{ij} &=-\nabla^-_{x'_j}e^{t\mathsf{L}} \mathbf{1}_{[\![ 0,x_j']\!]} (x_i)= e^{t\mathsf{L}}(x_i,x_j'),  \\
\mathsf{B}_t(x,y')_{ij}&=\lambda(y'_j)^{-1}(e^{t\mathsf{L}} \mathbf{1}_{[\![ 0,y_j']\!]}(x_i) -\mathbf{1}(j\ge i)),\\
\mathsf{C}_t(y,x')_{ij}&=\lambda(y_i)\nabla^+_{y_i}\nabla^-_{x'_j}e^{t\mathsf{L}} \mathbf{1}_{[\![ 0,x_j']\!]}(y_i),\\
\mathsf{D}_t(y,y')_{ij}&=-\frac{\lambda(y_i)}{\lambda(y'_j)}\nabla^+_{y_i}e^{t\mathsf{L}}\mathbf{1}_{[\![ 0,y_j']\!]}(y_i)=e^{t\mathsf{L}}(y_i,y_j').
\end{align*}
Observe that, the equivalence of the two representations for $\mathsf{A}_t$ is by definition while for $\mathsf{D}_t$ is due to Lemma \ref{DualityLemma}.
\end{defn}

We begin our study of $\mathsf{U}_t$ by proving some of its basic properties:

\begin{lem}\label{SubstochasticLemma}
For all $t\ge 0$ the kernel $\mathsf{U}_t^{N,N+1}$ satisfies: $\mathsf{U}_t^{N,N+1}\left[(y,x),(y',x')\right]\ge 0$, for $(y,x),(y',x')\in \mathbb{W}^{N,N+1}$ and $\mathsf{U}_t^{N,N+1}\mathbf{1}\le \mathbf{1}$.
\end{lem}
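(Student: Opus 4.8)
The plan is to establish the two assertions separately, non-negativity first and then sub-stochasticity, both by reducing the block-determinant kernel $\mathsf{U}_t^{N,N+1}$ to a more transparent object. First I would note that the bottom-right block $\mathsf{D}_t$ and the top-left block $\mathsf{A}_t$ are, by the identities recorded in Definition \ref{TwoLevelKM} (the second of which is Lemma \ref{DualityLemma}), simply the single-particle transition densities $e^{t\mathsf{L}}(y_i,y_j')$ and $e^{t\mathsf{L}}(x_i,x_j')$. This suggests interpreting the whole determinant as a Karlin--McGregor-type coincidence probability for $2N+1$ one-dimensional pure-birth walkers: the $N$ walkers carrying the $y$-coordinates and the $N+1$ walkers carrying the $x$-coordinates, run for time $t$, conditioned (via the Lindstr\"om--Gessel--Viennot / Karlin--McGregor lemma) to stay in a prescribed order. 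The off-diagonal blocks $\mathsf{B}_t$ and $\mathsf{C}_t$ (which involve $e^{t\mathsf{L}}\mathbf{1}_{[\![0,\cdot]\!]}$, i.e. the cumulative distribution of a single walker, and its discrete derivatives) are exactly what is produced when one walker is required to stay to a certain side of another, using $\nabla^-_{x'_j}e^{t\mathsf{L}}\mathbf{1}_{[\![0,x'_j]\!]}(x_i) = -e^{t\mathsf{L}}(x_i,x'_j)$ and telescoping. So the natural route is: exhibit $\mathsf{U}_t^{N,N+1}\big[(y,x),(y',x')\big]$ as $\mathbb{P}$ of an explicit event for a system of independent walkers started at $(y,x)$, namely that at time $t$ the $y$-walkers and $x$-walkers occupy positions $(y',x')$ and that along the way the interlacing-type ordering is never broken; non-negativity is then immediate, and $\mathsf{U}_t^{N,N+1}\mathbf{1}\le\mathbf{1}$ follows because summing a probability density of ``survival plus arrival'' over all terminal states gives the survival probability, which is at most one.

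Concretely, for the sub-stochasticity bound I would sum $\mathsf{U}_t^{N,N+1}\big[(y,x),(y',x')\big]$ over all $(y',x')\in\mathbb{W}^{N,N+1}$ and show the result equals the probability that $2N+1$ independent $\mathsf{L}$-walkers started from $(y,x)\in\mathbb{W}^{N,N+1}$ have not violated the interlacing ordering by time $t$. The cleanest way to organize this is to use the column operations built into the block structure: the entries of $\mathsf{B}_t$ and $\mathsf{C}_t$ are discrete antiderivatives (in the target variable $x'_j$ or $y'_j$) of transition densities, so performing the appropriate difference operations column by column inside the determinant turns $\mathsf{U}_t^{N,N+1}$ into a genuine Karlin--McGregor determinant $\det\big(e^{t\mathsf{L}}(\cdot_i,\cdot'_j)\big)$ for the $2N+1$ walkers with an order that, after the telescoping, becomes the strict ordering defining a Weyl chamber in $\mathbb{Z}^{2N+1}$. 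Then the Karlin--McGregor theorem (cf. \cite{KarlinMcGregorCoincidence}, \cite{Karlin}) gives that the sum over ordered terminal states is exactly the non-intersection probability of the corresponding ordered walkers, hence $\le 1$; and the same theorem, applied at the level of the unsummed determinant, gives the pointwise interpretation as a non-negative sub-probability density, yielding $\mathsf{U}_t^{N,N+1}\ge 0$.

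I expect the main obstacle to be purely bookkeeping: getting the combinatorics of the telescoping exactly right at the boundary of the Weyl chamber. The indicator terms $\mathbf{1}(j\ge i)$ subtracted inside $\mathsf{B}_t$, the mixed second differences $\nabla^+_{y_i}\nabla^-_{x'_j}$ appearing in $\mathsf{C}_t$, and the interlacing constraints $x_1\le y_1<x_2\le\cdots$ (with the asymmetric mixture of $\le$ and $<$) all have to match up so that after the column manipulations one lands precisely on a Karlin--McGregor determinant for walkers in a half-open Weyl chamber, with no leftover boundary terms and with the correct sign. The subtraction of $\mathbf{1}(j\ge i)$ is presumably there exactly to kill such a boundary term, so verifying that it does its job — i.e. that the resulting reduced determinant is literally $\mathcal{P}_t$ of an ordered system — is the delicate point. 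Once that identification is in place, both claims are immediate consequences of the probabilistic meaning of Karlin--McGregor kernels; for the present lemma it suffices to keep the argument at this structural level and defer the exact coupling statement (which is Theorem \ref{MarkovFunctionTheorem}) to later. If a fully self-contained check is wanted, one can alternatively verify $\mathsf{U}_t^{N,N+1}\ge 0$ directly from the contour-integral/spectral representation of $e^{t\mathsf{L}}$, but the probabilistic reduction is both shorter and more illuminating, so that is the approach I would present.
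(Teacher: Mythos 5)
Your proposal takes a genuinely different route from the paper, and I believe the route has a real gap at the positivity step.

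For non-negativity, you propose to manipulate the block determinant into a $(2N+1)\times(2N+1)$ Karlin--McGregor determinant $\det\bigl(e^{t\mathsf{L}}(\cdot_i,\cdot'_j)\bigr)$ for $2N+1$ independent walkers conditioned not to cross, and then invoke the KM theorem. This does not go through. The block $\mathsf{B}_t$ contains the entries $\lambda(y'_j)^{-1}\bigl(e^{t\mathsf{L}}\mathbf{1}_{[\![0,y'_j]\!]}(x_i)-\mathbf{1}(j\ge i)\bigr)$; the subtracted $\mathbf{1}(j\ge i)$ is not a discrete antiderivative of a transition density in either variable and cannot be eliminated by row or column operations, so the determinant never collapses into the form $\det\bigl(e^{t\mathsf{L}}(\cdot_i,\cdot'_j)\bigr)$. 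That indicator is the fingerprint of a \emph{coalescing} (not independent-and-killed) system: it is exactly what appears in the distribution function of a coalescing flow. Moreover, even setting the structural obstruction aside, the interlacing constraint mixes $\le$ and $<$, so $(y,x)\in\mathbb{W}^{N,N+1}$ does not lie in a strict $(2N+1)$-dimensional Weyl chamber; the KM theorem's non-negativity conclusion therefore cannot be applied off the shelf to a determinant indexed by all $2N+1$ points.

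What the paper actually does is quite different, and it sidesteps both obstructions. It introduces a system $\mathcal{S}_t(\cdot)$ of \emph{coalescing} $\mathsf{L}$-chains and uses the identity
\begin{align*}
\mathbb{P}\bigl(\mathcal{S}_{t}(z_i)\le z_i',\ 1\le i\le m\bigr)=\det\bigl(e^{t\mathsf{L}}\mathbf{1}_{[\![ 0,z_j']\!]}(z_i)-\mathbf{1}(i<j)\bigr)_{i,j=1}^m,
\end{align*}
then represents $\mathsf{U}_t^{N,N+1}\bigl[(y,x),(y',x')\bigr]$ as (a positive multiple of) the iterated signed discrete derivative $(-1)^{N}\nabla^+_{y_1}\cdots\nabla^+_{y_N}(-1)^{N+1}\nabla^-_{x'_1}\cdots\nabla^-_{x'_{N+1}}$ of $\mathbb{P}\bigl(\mathcal{S}_{t}(x_i)\le x_i',\ \mathcal{S}_{t}(y_j)\le y_j'\ \forall i,j\bigr)$. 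Positivity then comes from a monotone-coupling argument: the coalescing chains preserve order, so this event is monotone (increasing as the $x'_j$ increase, decreasing as the $y_i$ increase), and the iterated signed derivative of that probability is non-negative. No $(2N+1)$-walker KM picture is invoked anywhere.

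For the bound $\mathsf{U}_t^{N,N+1}\mathbf{1}\le\mathbf{1}$ your instinct to sum and recognize a KM sub-probability is in the right spirit, but the correct and more economical computation is to sum only over the $x'$-coordinates (each $x'_j$ ranging over $\{y'_{j-1}+1,\dots,y'_j\}$): this telescopes the $\mathsf{A}_t$ and $\mathsf{C}_t$ columns, and after elementary row/column operations the $(2N+1)\times(2N+1)$ determinant collapses to the $N\times N$ Karlin--McGregor kernel $\det\bigl(\mathsf{D}_t(y_i,y_j')\bigr)=\mathcal{P}_t^N(y,y')$. Since $\mathcal{P}_t^N$ is sub-Markov, summing further over $y'\in\mathbb{W}^N$ gives $\le 1$. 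This avoids ever needing the (nonexistent) $(2N+1)$-walker interpretation. In short: your identification of $\mathsf{A}_t$ and $\mathsf{D}_t$ as transition densities is right, and your intuition that the $\mathbf{1}(j\ge i)$ term is tied to boundary matching is in the right direction, but the conclusion you draw from it — that $\mathsf{U}_t^{N,N+1}$ is a big KM determinant — is the wrong one; that term is precisely what prevents the reduction, and the argument needs the coalescing-flow monotonicity instead.
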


\begin{proof}
We prove positivity first. A direct verification from Definition \ref{TwoLevelKM} appears to be hard (although it would be interesting to have one). Instead, we give a simple probabilistic argument. 

Let $\left((\mathcal{S}_t(x_1),\dots,\mathcal{S}_t(x_m));t\ge 0\right)$ be a system of $m$ independent chains with generator $\mathsf{L}$, starting from $(x_1,\dots,x_m) \in \mathbb{W}^m$, and which coalesce and move together once any two of them meet. We denote their law by $\mathbb{P}$. Let $z,z' \in \mathbb{W}^m$ and $t\ge 0$. Then, we have:
\begin{align}\label{CoalescingRepresentation}
\mathbb{P}\big(\mathcal{S}_{t}(z_i)\le z_i' \ , \ \textnormal{for} \ 1 \le i \le m\big)=\det \big(e^{t\mathsf{L}}\mathbf{1}_{[\![ 0,z_j']\!]}(z_i)-\textbf{1}(i<j)\big)_{i,j=1}^m.
\end{align}
The claim is a consequence of the Karlin-McGregor formula, see Proposition 2.5 in \cite{RandomGrowthKarlinMcGregor} for a proof in a completely analogous setting. Observe that (\ref{CoalescingRepresentation}) allows to give the following probabilistic representation for $\mathsf{U}_t^{N,N+1}$ (by applying discrete derivatives to the RHS of (\ref{CoalescingRepresentation}) we match with the expression from Definition \ref{TwoLevelKM}):
\begin{align*}
& \mathsf{U}_t^{N,N+1}[(y,x),(y',x')]=\\
&=\frac{\prod_{i=1}^{N}\lambda(y_i)}{\prod_{i=1}^{N}\lambda(y'_i)}(-1)^N\nabla^+_{y_1}\cdots\nabla^+_{y_N}(-1)^{N+1}\nabla^-_{x'_1}\cdots\nabla^-_{x'_{N+1}}\mathbb{P}\big(\mathcal{S}_{t}(x_i)\le x_i',\mathcal{S}_{t}(y_j)\le y_j' \ \ \textnormal{for all} \ \ i,j \big).
\end{align*}
Positivity of $\mathsf{U}_t^{N,N+1}$ is then a consequence of the fact that the events
\begin{align*}
\big\{\mathcal{S}_{t}(x_i)\le x_i',\mathcal{S}_{t}(y_j)\le y_j' \ \ \textnormal{for all} \ \ i,j  \big\}
\end{align*}
are increasing both as the variables $y_i$ decrease and as the variables $x'_j$ increase.

Finally, we need to prove that for any $(y,x) \in \mathbb{W}^{N,N+1}$ and $t\ge 0$:
\begin{align*}
\sum_{(y',x') \in \mathbb{W}^{N,N+1} }^{}\mathsf{U}_t^{N,N+1}\left[(y,x),(y',x')\right] \le 1.
\end{align*}
We claim that for any $(y,x) \in \mathbb{W}^{N,N+1}$ and $t\ge 0$:
\begin{align*}
\sum_{\{x': (y',x') \in \mathbb{W}^{N,N+1} \}}^{}\mathsf{U}_t^{N,N+1}\left[(y,x),(y',x')\right]=\det \left(\mathsf{D}_t(y_i,y_j')\right)_{i,j=1}^N=\mathcal{P}_t^N(y,y').
\end{align*}
Since $\left(\mathcal{P}_t^N;t\ge 0\right)$ is sub-Markov the statement of the proposition follows. Now in order to prove the claim we take the sum $\sum_{\{x': (y',x') \in \mathbb{W}^{N,N+1} \}}^{}$ in the explicit form of the kernel from Definition \ref{TwoLevelKM} and use multilinearity of the determinant. Then, the claim follows from the relations below:
\begin{align*}
\sum_{x_j'=y'_{j-1}+1}^{y'_j} \mathsf{A}_t\left(x,x'\right)_{ij}&=e^{t\mathsf{L}}\mathbf{1}_{[\![ 0,y_j']\!]}(x_i)-e^{t\mathsf{L}}\mathbf{1}_{[\![ 0,y_{j-1}']\!]}(x_i),\\
\sum_{x_j'=y'_{j-1}+1}^{y'_j}\mathsf{C}_{t}(y,x')_{ij} &=-\lambda(y_i)\nabla^+_{y_i} e^{t\mathsf{L}} \mathbf{1}_{[\![ 0,y_j']\!]}(y_i)+\lambda(y_{i})\nabla^+_{y_i} e^{t\mathsf{L}} \mathbf{1}_{[\![ 0,y_{j-1}']\!]}(y_i)
\end{align*}
and simple row-column operations.
\end{proof}

We now introduce the inhomogeneous two-level dynamics:

\begin{defn}\label{TwoLevelDynamics}
\textsf{Two level inhomogeneous push-block dynamics}. This is the continuous time Markov chain $\left(\left(Y(t),X(t)\right);t\ge 0\right)$ in $\mathbb{W}^{N,N+1}$, with possibly finite lifetime, described as follows. Each of the $2N+1$ particles evolves as an independent chain with generator $\mathsf{L}$ subject to the following interactions. The $Y$-particles evolve autonomously. When a potential move by the $X$-particles would break the interlacing it is blocked, see Figure \ref{figureXYinteraction2}. While if a potential move by the $Y$-particles would break the interlacing then the corresponding $X$-particle is pushed to the right by one, see Figure \ref{figureXYinteraction1}. The Markov chain is killed when two $Y$-particles collide, at the stopping time:
\begin{align*}
\tau=\inf\{t>0:\exists \ 1\le i<j \le N \ ,\textnormal{ such that } Y_i(t)=Y_j(t)\}.
\end{align*}
\end{defn}

\begin{figure}
\captionsetup{singlelinecheck = false, justification=justified}
\begin{tikzpicture}
\draw[dotted] (1,0) grid (4,1);
\draw[fill,red] (1,1) circle [radius=0.1];
\node[above left] at (1,1) {$x_{i}$};

\draw[fill] (1,0) circle [radius=0.1];
\node[above right] at (1,0) {$y_i$};

\draw[->, very thick] (1,1) to [out=60, in=120] (2,1);
\draw[very thick] (1.75,1) to (1.25,1.5);
\draw[very thick] (1.25,1) to (1.75,1.5);
\node[below] at (1,0) {$\star$};
\node[below] at (2,0) {$\star+1$};
\node[below] at (3,0) {$\star+2$};
\node[below] at (4,0) {$\star+3$};

\draw[->,very thick] (5,0.5) to (7,0.5);

\draw[dotted] (8,0) grid (11,1);
\draw[fill] (8,1) circle [radius=0.1];
\node[above right] at (8,1) {$x_{i}$};

\draw[fill] (8,0) circle [radius=0.1];
\node[above right] at (8,0) {$y_i$};

\node[below] at (8,0) {$\star$};
\node[below] at (9,0) {$\star+1$};
\node[below] at (10,0) {$\star+2$};
\node[below] at (11,0) {$\star+3$};

\end{tikzpicture}

\caption{($\sf{Blocking}$) A jump of $x_{i}$ is blocked by $y_{i}$ so that the interlacing is maintained. Here, the clock of $x_{i}$ rings with rate $\lambda(\star)$.}\label{figureXYinteraction2}
\end{figure}
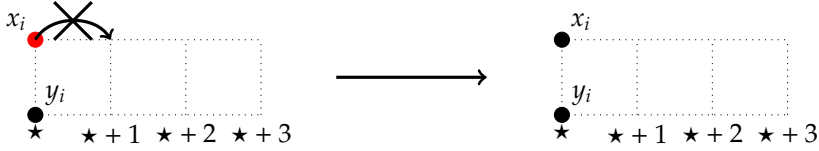

\begin{figure}
\captionsetup{singlelinecheck = false, justification=justified}
\begin{tikzpicture}
\draw[dotted] (1,0) grid (4,1);
\draw[fill] (2,1) circle [radius=0.1];
\node[above right] at (2,1) {$x_{i+1}$};

\draw[fill,red] (1,0) circle [radius=0.1];
\node[above right] at (1,0.15) {$y_i$};

\draw[rotate around={135:(1.5,0.5)}] (1.5,0.5) ellipse (0.6cm and 1cm);
\draw[->, very thick,] (1,0) to [out=60, in=120] (2,0);
\node[below] at (1,0) {$\star$};
\node[below] at (2,0) {$\star+1$};
\node[below] at (3,0) {$\star+2$};
\node[below] at (4,0) {$\star+3$};

\draw[->,very thick] (5,0.5) to (7,0.5);

\draw[dotted] (8,0) grid (11,1);
\draw[fill] (10,1) circle [radius=0.1];
\node[above right] at (10,1) {$x_{i+1}$};

\draw[fill] (9,0) circle [radius=0.1];
\node[above right] at (9,0) {$y_i$};

\node[below] at (8,0) {$\star$};
\node[below] at (9,0) {$\star+1$};
\node[below] at (10,0) {$\star+2$};
\node[below] at (11,0) {$\star+3$};

\end{tikzpicture}
\caption{($\sf{Pushing}$) A jump of $y_i$ induces a simultaneous jump (pushes) of $x_{i+1}$ to the right so that the interlacing is maintained. Here, the clock of $y_i$ rings with rate $\lambda(\star)$.}\label{figureXYinteraction1}
\end{figure}
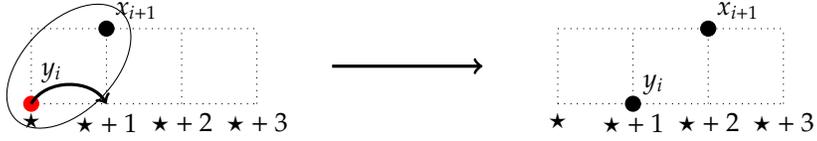

\begin{prop}
The block determinant kernel $\mathsf{U}_t^{N,N+1}$ forms the transition density for the dynamics in Definition \ref{TwoLevelDynamics}.
\end{prop}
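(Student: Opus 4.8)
The plan is to check that $t\mapsto\mathsf{U}_t^{N,N+1}$ solves the backward Kolmogorov equation of the chain in Definition \ref{TwoLevelDynamics} with initial value the identity kernel, and then to conclude by uniqueness. Write $\mathcal{A}$ for the generator of that chain, with a cemetery state adjoined to record the killing at $\tau$. By $(\mathsf{UB})$ the $2N+1$ particles have uniformly bounded jump rates, so $\mathcal{A}$ is a bounded operator and $\big(e^{t\mathcal{A}}\big)_{t\ge 0}$ is the unique bounded kernel-valued solution of $\frac{d}{dt}Q_t=\mathcal{A}Q_t$, $Q_0=\mathrm{Id}$. Since Lemma \ref{SubstochasticLemma} already shows that $\mathsf{U}_t^{N,N+1}$ is sub-stochastic (hence bounded) and that summing out the $x'$-variables leaves $\mathcal{P}_t^N$, the transition density of the autonomous $Y$-evolution, it suffices to establish $\mathsf{U}_0^{N,N+1}=\mathrm{Id}$ and $\frac{d}{dt}\mathsf{U}_t^{N,N+1}=\mathcal{A}\,\mathsf{U}_t^{N,N+1}$.

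For the evolution equation I would first differentiate the block determinant in $t$. Each entry of the matrix in Definition \ref{TwoLevelKM} is built from $e^{t\mathsf{L}}$ and is smooth in $t$; using $\frac{d}{dt}e^{t\mathsf{L}}h=\mathsf{L}e^{t\mathsf{L}}h$ and $\mathsf{L}_a=\lambda(a)\nabla^+_a$, the $t$-derivative of the row indexed by a given particle equals $\lambda(\cdot)$ times the $\nabla^+$-difference of that row in the particle's position. For the rows indexed by the $x_i$ this is immediate, every entry of $\mathsf{A}_t,\mathsf{B}_t$ being of the form $[e^{t\mathsf{L}}h](x_i)$ up to an $x_i$-independent constant; for the rows indexed by the $y_i$ one uses the first representations of $\mathsf{C}_t,\mathsf{D}_t$, all of whose entries carry the prefactor $\lambda(y_i)\nabla^+_{y_i}$ acting on some $[e^{t\mathsf{L}}h](y_i)$, and the same identity then follows by a short computation in the spirit of the proof of Lemma \ref{DualityLemma}. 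As $\mathsf{U}_t^{N,N+1}$ depends on each particle's position through exactly one row, multilinearity of the determinant yields
\begin{align*}
\frac{d}{dt}\mathsf{U}_t^{N,N+1}\big[(y,x),\,\cdot\,\big]=\sum_{i=1}^{N+1}\lambda(x_i)\Big(\mathsf{U}_t^{N,N+1}\big[(y,x^{i+}),\,\cdot\,\big]-\mathsf{U}_t^{N,N+1}\big[(y,x),\,\cdot\,\big]\Big)+\sum_{i=1}^{N}\lambda(y_i)\Big(\mathsf{U}_t^{N,N+1}\big[(y^{i+},x),\,\cdot\,\big]-\mathsf{U}_t^{N,N+1}\big[(y,x),\,\cdot\,\big]\Big),
\end{align*}
that is, $\frac{d}{dt}\mathsf{U}_t^{N,N+1}=\mathcal{A}^{\mathrm{free}}\mathsf{U}_t^{N,N+1}$, with $\mathcal{A}^{\mathrm{free}}$ the generator of $2N+1$ independent pure-birth chains and $x^{i+},y^{i+}$ denoting increment of the $i$-th coordinate by one.

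It then remains to upgrade $\mathcal{A}^{\mathrm{free}}$ to the interacting $\mathcal{A}$ at the boundary of $\mathbb{W}^{N,N+1}$, where blocking, pushing and killing switch on. The recurring mechanism is a degeneracy of rows coming from the fact that, by construction (and recording the two equivalent forms of $\mathsf{C}_t,\mathsf{D}_t$ via Lemma \ref{DualityLemma}), the $i$-th row of the $\mathsf{C}_t\mid\mathsf{D}_t$ block is $-\lambda(y_i)\nabla^+_{y_i}$ applied to exactly the functions occurring in the $i$-th row of the $\mathsf{A}_t\mid\mathsf{B}_t$ block, viewed as functions of position. If $x_i=y_i$ (blocked $x_i$-move), then $\mathsf{U}_t^{N,N+1}[(y,x^{i+}),\cdot]-\mathsf{U}_t^{N,N+1}[(y,x),\cdot]$ is, by multilinearity, a determinant whose $x_i$-row has become a $\nabla^+_{y_i}$-difference, hence proportional to the $y_i$-row; the determinant vanishes, and the blocked term disappears as it must. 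If $y_i+1=x_{i+1}$ (pushing), the same proportionality gives $\mathsf{U}_t^{N,N+1}[(y^{i+},x),\cdot]=\mathsf{U}_t^{N,N+1}[(y^{i+},x^{(i+1)+}),\cdot]$, turning the free term into the interacting one in which $y_i$ and $x_{i+1}$ jump simultaneously. If $y_i+1=y_{i+1}$ (killing), then $(y^{i+},x)$ has two equal $y$-coordinates, hence two identical rows, so $\mathsf{U}_t^{N,N+1}[(y^{i+},x),\cdot]=0$, i.e. the jump to the cemetery. Particles with no interaction partner ($x_{N+1}$, and $x_1$ unless $x_1=y_1$, and so on) require no correction, so $\mathcal{A}^{\mathrm{free}}\mathsf{U}_t^{N,N+1}=\mathcal{A}\mathsf{U}_t^{N,N+1}$ on $\mathbb{W}^{N,N+1}$. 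Finally $\mathsf{U}_0^{N,N+1}=\mathrm{Id}$ is a Lindstr\"om--Gessel--Viennot-type computation: at $t=0$ the four blocks become explicit matrices with entries $0,1$ or discrete first differences thereof, and the strict monotonicity of the coordinates forces only the identity matching to contribute, with value $\mathbf{1}\big((y,x)=(y',x')\big)$. Uniqueness then identifies $\mathsf{U}_t^{N,N+1}$ with $e^{t\mathcal{A}}$.

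I expect the real work to be organizational rather than conceptual: writing $\mathcal{A}$ out precisely and unambiguously — in particular pinning down the degenerate corner in which one $y_i$-jump would at once push $x_{i+1}$ and make $y_i$ collide with $y_{i+1}$ (the determinant vanishes there under either convention, so this is harmless but should be spelled out) — and then running the case analysis so that the proportional-rows and equal-rows phenomena account, one interaction at a time, for the whole difference $\mathcal{A}^{\mathrm{free}}-\mathcal{A}$. The $\mathsf{U}_0^{N,N+1}=\mathrm{Id}$ step, although routine, is the other place where the interlacing must be used with some care.
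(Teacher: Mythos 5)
Your proposal is correct and follows essentially the same route as the paper: verify the Kolmogorov backward equation (free generator away from the boundary, row operations turning free jumps into blocked/pushed/killed ones at the boundary, Dirichlet condition from identical rows when $Y$-coordinates coincide), check the $t=0$ identity, and conclude by uniqueness given that Lemma \ref{SubstochasticLemma} puts $\mathsf{U}_t^{N,N+1}$ in the sub-stochastic class. The only cosmetic differences are that you organize all boundary identities uniformly via the "$\mathsf{C}_t|\mathsf{D}_t$ row is $-\lambda(\cdot)\nabla^+$ of the $\mathsf{A}_t|\mathsf{B}_t$ row" proportionality rather than spelling out the $2\times 2$ row operations case by case, and that you phrase uniqueness in terms of bounded-operator semigroup theory rather than pointing to the generic argument the paper cites.
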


\begin{proof}
We show that $\mathsf{U}_t^{N,N+1}$ solves the Kolmogorov's backward equation corresponding to the dynamics in Definition \ref{TwoLevelDynamics}. Uniqueness in the class of substochastic matrices (which $\mathsf{U}_t^{N,N+1}$ is a member of by Lemma \ref{SubstochasticLemma})  follows by a generic argument presented in a completely analogous setting in Section 3 in \cite{RandomGrowthKarlinMcGregor}, see also \cite{BorodinOlshanski}.

First, observe that we have the $t=0$ initial condition: 
\begin{align}
 \mathsf{U}_0^{N,N+1}\left[(y,x),(y',x')\right]=\mathbf{1}\left((y,x)=(y',x')\right), \ (y,x), (y',x') \in \mathbb{W}^{N,N+1}.
\end{align}
This follows directly from the form of $ \mathsf{U}_t^{N,N+1}\left[(y,x),(y',x')\right]$, by noting that as $t \downarrow 0$, the diagonal entries converge to $\mathbf{1} \left(x_i=x_i'\right),\mathbf{1}\left(y_i=y_i'\right)$, while all other contributions to the determinant vanish.

Moreover, observe that we have the Dirichlet boundary conditions when two $Y$-coordinates coincide:
\begin{align*}
\mathsf{U}_t^{N,N+1}\left[(y,x),(y',x')\right]=0, \ y_i=y_{i+1}.
\end{align*}

Moving on, note that (here we are abusing notation slightly by using the same notation for both the matrices and their scalar entries) we have the following, for any $x,x'\in \mathbb{Z}_+$ fixed and $t>0$:
\begin{align}
\frac{d}{dt}\mathsf{A}_t(x,x')=\mathsf{L}_x\mathsf{A}_t(x,x'),
\frac{d}{dt}\mathsf{B}_t(x,x')=\mathsf{L}_x\mathsf{B}_t(x,x'),\label{diffeq1}\\
\frac{d}{dt}\mathsf{C}_t(x,x')=\mathsf{L}_x\mathsf{C}_t(x,x'),
\frac{d}{dt}\mathsf{D}_t(x,x')=\mathsf{L}_x\mathsf{D}_t(x,x')\label{diffeq2}.
\end{align}
To see the relation for $\mathsf{C}_t$ observe that:
\begin{align*}
\frac{d}{dt}\mathsf{C}_t(x,y)&=\lambda(x)\nabla^+_{x}\nabla^-_{y}\frac{d}{dt}e^{t\mathsf{L}} \mathbf{1}_{[\![ 0,y]\!]}(x)=\lambda(x)\nabla^+_{x}\nabla^-_{y}\lambda(x)\nabla_x^+e^{t\mathsf{L}} \mathbf{1}_{[\![ 0,y]\!]}(x)\\
&=\lambda(x)\nabla^+_{x}\lambda(x)\nabla_x^+\nabla^-_{y}e^{t\mathsf{L}} \mathbf{1}_{[\![ 0,y]\!]}(x)=\mathsf{L}_x\mathsf{C}_t(x,y).
\end{align*}

We define $\mathsf{int}\left(\mathbb{W}^{N,N+1}\right)$ to be the set of all pairs $(y,x)\in \mathbb{W}^{N,N+1}$ which when any of the $x$ or $y$ coordinates is increased by $1$ they still belong to $\mathbb{W}^{N,N+1}$; namely the pairs $(y,x)\in \mathbb{W}^{N,N+1}$ so that $(y,x)+(\mathsf{e}_i,0),(y,x)+(0,\mathsf{e}_i)\in \mathbb{W}^{N,N+1}$ with $\mathsf{e}_i$ being the unit vector in the $i$-th coordinate. Observe that in $\mathsf{int}\left(\mathbb{W}^{N,N+1}\right)$ each of coordinates evolves as an independent chain with generator $\mathsf{L}$ which do not interact. Then, by the multilinearity of the determinant and relations (\ref{diffeq1}) and (\ref{diffeq2}) we obtain:
\begin{align*}
\frac{d}{dt}\mathsf{U}_t^{N,N+1}\left[(y,x),(y',x')\right]=\left(\sum_{i=1}^{N+1}\mathsf{L}_{x_i}+\sum_{i=1}^{N}\mathsf{L}_{y_i}\right)\mathsf{U}_t^{N,N+1}\left[(y,x),(y',x')\right], \ t>0, (y,x) \in \mathsf{int}\left(\mathbb{W}^{N,N+1}\right).
\end{align*}
It remains to deal with the interactions. We will only consider one blocking and one pushing case, as all others are entirely analogous. First, the blocking case with $x_1=y_1=x$. In order to ease notation and also make the gist of the simple argument transparent we further restrict our attention to the rows containing $x_1,y_1$. In fact, it is not hard to see that it suffices to consider the $2\times 2$ matrix determinant given by, with $x',y'\in \mathbb{Z}$ fixed:
\begin{align*}
\det\
 \begin{pmatrix}
\mathsf{A}_t(x,x') & \mathsf{B}_t(x,y')\\
  \mathsf{C}_t(x,x') & \mathsf{D}_t(x,y') 
 \end{pmatrix}.
\end{align*}
By taking the $\frac{d}{dt}$-derivative of the determinant, we obtain using (\ref{diffeq1}) and (\ref{diffeq2}):

\begin{align*}
\frac{d}{dt}\det\
 \begin{pmatrix}
\mathsf{A}_t(x,x') & \mathsf{B}_t(x,y')\\
  \mathsf{C}_t(x,x') & \mathsf{D}_t(x,y') 
 \end{pmatrix}=\lambda(x)\left[\det\
  \begin{pmatrix}
 \mathsf{A}_t(x+1,x') & \mathsf{B}_t(x+1,y')\\
   \mathsf{C}_t(x,x') & \mathsf{D}_t(x,y') 
  \end{pmatrix}-\det\
   \begin{pmatrix}
 \mathsf{A}_t(x,x') & \mathsf{B}_t(x,y')\\
    \mathsf{C}_t(x,x') & \mathsf{D}_t(x,y') 
   \end{pmatrix}\right]\\
   +\lambda(x)\left[\det\
      \begin{pmatrix}
     \mathsf{A}_t(x,x') & \mathsf{B}_t(x,y')\\
       \mathsf{C}_t(x+1,x') & \mathsf{D}_t(x+1,y') 
      \end{pmatrix}-\det\
       \begin{pmatrix}
     \mathsf{A}_t(x,x') & \mathsf{B}_t(x,y')\\
        \mathsf{C}_t(x,x') & \mathsf{D}_t(x,y') 
       \end{pmatrix}\right].       
\end{align*}
On the other hand, what we would like to have according to the dynamics in Definition \ref{TwoLevelDynamics} is simply the following:
\begin{align*}
\frac{d}{dt}\det\
 \begin{pmatrix}
\mathsf{A}_t(x,x') & \mathsf{B}_t(x,y')\\
  \mathsf{C}_t(x,x') & \mathsf{D}_t(x,y') 
 \end{pmatrix}=
    \lambda(x)\left[\det\
      \begin{pmatrix}
     \mathsf{A}_t(x,x') & \mathsf{B}_t(x,y')\\
       \mathsf{C}_t(x+1,x') & \mathsf{D}_t(x+1,y') 
      \end{pmatrix}-\det\
       \begin{pmatrix}
      \mathsf{A}_t(x,x') & \mathsf{B}_t(x,y')\\
        \mathsf{C}_t(x,x') & \mathsf{D}_t(x,y') 
       \end{pmatrix}\right].
\end{align*}
We thus must show that:
\begin{align}\label{equality1}
\det\
  \begin{pmatrix}
 \mathsf{A}_t(x+1,x') & \mathsf{B}_t(x+1,y')\\
   \mathsf{C}_t(x,x') & \mathsf{D}_t(x,y') 
  \end{pmatrix}=\det\
   \begin{pmatrix}
  \mathsf{A}_t(x,x') & \mathsf{B}_t(x,y')\\
    \mathsf{C}_t(x,x') & \mathsf{D}_t(x,y') 
   \end{pmatrix},
\end{align}
which corresponds to particle $x_1$ being blocked when $x_1=y_1$ and $x_1$ tries to jump (see the configuration in Figure \ref{figureXYinteraction2}). 
In order to obtain (\ref{equality1}) we shall work on the RHS. We multiply the second row by $-\lambda(x)^{-1}$ and add it to the first row to obtain
\begin{align*}
\mathsf{A}_t(x,x')-\lambda(x)^{-1}\mathsf{C}_t(x,x')&=-\nabla^-_{x'}e^{t\mathsf{L} }\mathbf{1}_{[\![ 0,x']\!]}(x)-\nabla^-_{x'}e^{t\mathsf{L} }\mathbf{1}_{[\![ 0,x']\!]}(x+1)+\nabla^-_{x'}e^{t\mathsf{L} }\mathbf{1}_{[\![ 0,x']\!]}(x)\\
&=-\nabla^-_{x'}e^{t\mathsf{L} }\mathbf{1}_{[\![ 0,x']\!]}(x+1)
=\mathsf{A}_t(x+1,x'),
\end{align*}
and analogously for the second column, which then gives us the LHS of (\ref{equality1}) as desired.

Similarly, we consider a pushing move with $y_1=x,x_2=x+1$ and $x',y'\in \mathbb{Z}_+$ fixed (see the configuration in Figure \ref{figureXYinteraction1}):
\begin{align*}
\det\
 \begin{pmatrix}
\mathsf{A}_t(x+1,x') & \mathsf{B}_t(x+1,y')\\
  \mathsf{C}_t(x,x') & \mathsf{D}_t(x,y') 
 \end{pmatrix}.
\end{align*}
We calculate using the relations (\ref{diffeq1}) and (\ref{diffeq2}):
\begin{align*}
&\frac{d}{dt}\det\
 \begin{pmatrix}
\mathsf{A}_t(x+1,x') & \mathsf{B}_t(x+1,y')\\
  \mathsf{C}_t(x,x') & \mathsf{D}_t(x,y') 
 \end{pmatrix}\\&=\lambda(x+1)\left[\det\
  \begin{pmatrix}
 \mathsf{A}_t(x+2,x') & \mathsf{B}_t(x+2,y')\\
   \mathsf{C}_t(x,x') & \mathsf{D}_t(x,y') 
  \end{pmatrix}-\det\
   \begin{pmatrix}
 \mathsf{A}_t(x+1,x') & \mathsf{B}_t(x+1,y')\\
    \mathsf{C}_t(x,x') & \mathsf{D}_t(x,y') 
   \end{pmatrix}\right]\\
   &+\lambda(x)\left[\det\
      \begin{pmatrix}
     \mathsf{A}_t(x+1,x') & \mathsf{B}_t(x+1,y')\\
       \mathsf{C}_t(x+1,x') & \mathsf{D}_t(x+1,y') 
      \end{pmatrix}-\det\
       \begin{pmatrix}
     \mathsf{A}_t(x+1,x') & \mathsf{B}_t(x+1,y')\\
        \mathsf{C}_t(x,x') & \mathsf{D}_t(x,y') 
       \end{pmatrix}\right].       
\end{align*}
From the dynamics in Definition \ref{TwoLevelDynamics} we need to have the following:
\begin{align*}
&\frac{d}{dt}\det\
 \begin{pmatrix}
\mathsf{A}_t(x+1,x') & \mathsf{B}_t(x+1,y')\\
  \mathsf{C}_t(x,x') & \mathsf{D}_t(x,y') 
 \end{pmatrix}\\&=\lambda(x+1)\left[\det\
  \begin{pmatrix}
 \mathsf{A}_t(x+2,x') & \mathsf{B}_t(x+2,y')\\
   \mathsf{C}_t(x,x') & \mathsf{D}_t(x,y') 
  \end{pmatrix}-\det\
   \begin{pmatrix}
 \mathsf{A}_t(x+1,x') & \mathsf{B}_t(x+1,y')\\
    \mathsf{C}_t(x,x') & \mathsf{D}_t(x,y') 
   \end{pmatrix}\right]\\
   &+\lambda(x)\left[\det\
      \begin{pmatrix}
     \mathsf{A}_t(x+2,x') & \mathsf{B}_t(x+2,y')\\
       \mathsf{C}_t(x+1,x') & \mathsf{D}_t(x+1,y') 
      \end{pmatrix}-\det\
       \begin{pmatrix}
     \mathsf{A}_t(x+1,x') & \mathsf{B}_t(x+1,y')\\
        \mathsf{C}_t(x,x') & \mathsf{D}_t(x,y') 
       \end{pmatrix}\right].       
\end{align*}
Hence, we need to show:
\begin{align*}
\det\
  \begin{pmatrix}
 \mathsf{A}_t(x+2,x') & \mathsf{B}_t(x+2,y')\\
   \mathsf{C}_t(x+1,x') & \mathsf{D}_t(x+1,y') 
  \end{pmatrix}=\det\
   \begin{pmatrix}
  \mathsf{A}_t(x+1,x') & \mathsf{B}_t(x+1,y')\\
    \mathsf{C}_t(x+1,x') & \mathsf{D}_t(x+1,y') 
   \end{pmatrix},
\end{align*}
which follows from (\ref{equality1}) after relabelling $x \to x+1$.

\end{proof}

We now need a couple of definitions whose purpose will be clear shortly.

\begin{defn}
We define the positive kernel $\Lambda_{N}^{N+1}$ from $\mathbb{W}^{N+1}$ to $\mathbb{W}^N$ by its density (with respect to counting measure):
\begin{align*}
\Lambda_N^{N+1}\left(x,y\right)=\prod_{i=1}^{N}\frac{1}{\lambda(y_i)}\mathbf{1}\left(y\prec x\right).
\end{align*}
Abusing notation, we can also view $\Lambda_N^{N+1}$ as a kernel from $\mathbb{W}^{N+1}$ to $\mathbb{W}^{N,N+1}$, in which case we write $\Lambda_N^{N+1}(x,(y,z))=\Lambda_N^{N+1}(x,y)$. Observe that, this is supported on elements $(y,z)\in \mathbb{W}^{N,N+1}$ such that $z\equiv x$.
\end{defn}

\begin{defn}
For any $N\ge 1$ and $x \in \mathbb{W}^N$ define the functions $\mathfrak{h}_N\left(x\right)=\mathfrak{h}_N\left(x;\lambda\right)$ recursively by, $\mathfrak{h}_1(x)\equiv 1$ and
\begin{align*}
\mathfrak{h}_{N+1}(x)=\left[\Lambda_N^{N+1}\mathfrak{h}_N\right](x).
\end{align*}
\end{defn}

The functions $\mathfrak{h}_N$ can in fact be written as determinants whose entries are defined recursively:
\begin{lem}\label{DeterminantRepresentationHarmonic}
Let $N\ge 1$ and $x \in \mathbb{W}^N$. Then,
\begin{align*}
\mathfrak{h}_N(x;\lambda)=\det \left(\mathfrak{I}_{i-1}(x_j;\lambda)\right)_{i,j=1}^N,
\end{align*}
where the functions $\mathfrak{I}_i$ are defined by, for $x\in \mathbb{Z}_+$:
\begin{align}\label{RecursionHarmonicFunctions}
\mathfrak{I}_i(x;\lambda)=\sum_{y=0}^{x-1}\frac{1}{\lambda(y)}\mathfrak{I}_{i-1}(y;\lambda), \ \mathfrak{I}_0\left(x;\lambda\right)\equiv 1.
\end{align}
\end{lem}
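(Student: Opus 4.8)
The plan is to prove the determinantal formula for $\mathfrak{h}_N$ by induction on $N$, unwinding the recursion $\mathfrak{h}_{N+1} = \Lambda_N^{N+1}\mathfrak{h}_N$ and matching it against the determinantal ansatz $\det(\mathfrak{I}_{i-1}(x_j))_{i,j=1}^N$. The base case $N=1$ is immediate since $\mathfrak{h}_1 \equiv 1 = \mathfrak{I}_0$ and the $1\times 1$ determinant is just $\mathfrak{I}_0(x_1) = 1$.

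For the inductive step, suppose $\mathfrak{h}_N(y) = \det(\mathfrak{I}_{i-1}(y_j))_{i,j=1}^N$ for all $y \in \mathbb{W}^N$. Then for $x \in \mathbb{W}^{N+1}$,
\begin{align*}
\mathfrak{h}_{N+1}(x) = \sum_{y \prec x} \prod_{i=1}^{N}\frac{1}{\lambda(y_i)} \det\left(\mathfrak{I}_{i-1}(y_j)\right)_{i,j=1}^N.
\end{align*}
The interlacing constraint $y \prec x$ reads $x_1 \le y_1 < x_2 \le y_2 < \cdots < x_{N+1}$, so each $y_j$ ranges independently over $\{x_j, x_j+1, \dots, x_{j+1}-1\}$. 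Expanding the determinant by multilinearity in its columns, the sum factorizes over $j$, and one is led to evaluate, for each column index $j$ and each row index $i$,
\begin{align*}
\sum_{y_j = x_j}^{x_{j+1}-1} \frac{1}{\lambda(y_j)}\, \mathfrak{I}_{i-1}(y_j) = \mathfrak{I}_i(x_{j+1}) - \mathfrak{I}_i(x_j),
\end{align*}
which is exactly the defining recursion (\ref{RecursionHarmonicFunctions}) applied as a telescoping difference. Therefore $\mathfrak{h}_{N+1}(x) = \det\left(\mathfrak{I}_i(x_{j+1}) - \mathfrak{I}_i(x_j)\right)_{i,j=1}^N$, an $N\times N$ determinant in the "successive differences" form.

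The remaining, and main, step is a standard determinant manipulation: one must show this $N\times N$ difference-determinant equals the $(N+1)\times(N+1)$ determinant $\det(\mathfrak{I}_{i-1}(x_j))_{i,j=1}^{N+1}$. To see this, start from $\det(\mathfrak{I}_{i-1}(x_j))_{i,j=1}^{N+1}$: the first row is $(\mathfrak{I}_0(x_1), \dots, \mathfrak{I}_0(x_{N+1})) = (1,1,\dots,1)$. Performing the column operations $C_{j+1} \to C_{j+1} - C_j$ for $j = N, N-1, \dots, 1$ (in that order, so each operation uses the original $C_j$) kills all entries of the first row except the first, which stays $1$; expanding along the first row then yields $\det\left(\mathfrak{I}_i(x_{j+1}) - \mathfrak{I}_i(x_j)\right)_{i,j=1}^{N}$, with the row index shifted to $i = 1, \dots, N$. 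This matches the expression obtained above, completing the induction. The only thing to be careful about is the bookkeeping of row/column indices and the order of the column operations; I would state the column-operation identity as a small self-contained sublemma (or just cite it as elementary) to keep the argument clean.
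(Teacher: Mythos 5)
Your proof is correct and fills in precisely the details the paper leaves to the reader: the paper's proof is the one-liner "Direct computation by induction using multilinearity of the determinant," and your argument (induction, multilinearity to move the interlacing sum into the determinant columns, the telescoping identity $\sum_{y_j=x_j}^{x_{j+1}-1}\lambda(y_j)^{-1}\mathfrak{I}_{i-1}(y_j)=\mathfrak{I}_i(x_{j+1})-\mathfrak{I}_i(x_j)$, and the column operations to pass from the $N\times N$ difference determinant to the $(N+1)\times(N+1)$ determinant) is exactly that computation carried out. The bookkeeping of the column operation order and index shift is handled correctly.
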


\begin{proof}
Direct computation by induction using multinearity of the determinant.
\end{proof}

\begin{rmk}
Observe that for $\lambda(\cdot)\equiv \mathbf{1}$ we have, for $x \in \mathbb{W}^N$:
\begin{align*}
\mathfrak{h}_N\left(x;\mathbf{1}\right)=\frac{\prod_{1\le i<j \le N}^{}(x_j-x_i)}{\prod_{j=1}^{N}(j-1)!}.
\end{align*}
This is the harmonic function associated to $N$ independent Poisson processes (i.e. with $\mathsf{L}=\nabla^+$) killed when they intersect, see \cite{NonCollidingWalks}, \cite{OConnellJPhys}, \cite{OConnellTams}, \cite{BorodinFerrari}.
\end{rmk}

\begin{rmk}
Lemma \ref{DeterminantRepresentationHarmonic} implies that the sequence of functions $\{\mathfrak{I}_{i}(\cdot;\lambda)\}_{i\ge 1}$ forms a (discrete) extended complete Chebyshev system on $\mathbb{Z}_+$, see \cite{Karlin}. On the real line and under certain assumptions, such systems have been classified and are characterized through a recurrence like (\ref{RecursionHarmonicFunctions}) (with integrals instead of sums), see \cite{Karlin}.
\end{rmk}

\begin{rmk}
It is possible to express the entries of the determinant representation for $\mathfrak{h}_N(x;\lambda)$ in terms of contour integrals as we shall see in Section 3. This is essential in order to perform the computation of the correlation kernel.
\end{rmk}

Now, we let $\Pi_N^{N+1}$ be the operator induced by the projection on the $y$-coordinates. More precisely, for a function $f$ on $\mathbb{W}^N$, the function $\Pi_N^{N+1}f$ on $\mathbb{W}^{N,N+1}$ is defined by $\left[\Pi_N^{N+1}f\right](y,x)=f(y)$. Then, we have:

\begin{prop}\label{PropInter1} For $t \ge 0$, we have the following equalities of positive kernels,
\begin{align}
\left[\Pi_{N}^{N+1}\mathcal{P}_t^{N}\right]((y,x),y')&=\left[\mathsf{U}_t^{N,N+1}\Pi_{N}^{N+1}\right]((y,x),y'), \ (y,x)\in \mathbb{W}^{N,N+1},y'\in \mathbb{W}^{N+1}.
\end{align}
\end{prop}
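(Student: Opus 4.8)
The plan is to read off what the two operators actually do, identify both sides as kernels from $\mathbb{W}^{N,N+1}$ to $\mathbb{W}^N$, and thereby reduce the statement to a summation identity for the block kernel $\mathsf{U}_t^{N,N+1}$ that was, in essence, already established in the course of proving Lemma \ref{SubstochasticLemma}. Concretely, since $\Pi_N^{N+1}$ is the lift $[\Pi_N^{N+1}f](y,x)=f(y)$, as a kernel it has entries $\mathbf{1}(y=\cdot)$ and merely reads off the $Y$-argument, so
\[
\left[\Pi_N^{N+1}\mathcal{P}_t^N\right]\big((y,x),y'\big)=\mathcal{P}_t^N(y,y'),
\]
with no dependence on $x$. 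On the other side, composing $\mathsf{U}_t^{N,N+1}$ with $\Pi_N^{N+1}$ and again using that $\Pi_N^{N+1}$ only records the $Y$-coordinates,
\[
\left[\mathsf{U}_t^{N,N+1}\Pi_N^{N+1}\right]\big((y,x),y'\big)=\sum_{\{x'\,:\,(y',x')\in\mathbb{W}^{N,N+1}\}}\mathsf{U}_t^{N,N+1}\big[(y,x),(y',x')\big].
\]
Thus the proposition is exactly the assertion that this last sum equals $\mathcal{P}_t^N(y,y')$, which is precisely the displayed claim inside the proof of Lemma \ref{SubstochasticLemma}; one may simply invoke it.

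For a self-contained argument one redoes that computation. Expand $\mathsf{U}_t^{N,N+1}$ as the $(2N+1)\times(2N+1)$ block determinant of Definition \ref{TwoLevelKM}. With $y'$ fixed the constraint $(y',x')\in\mathbb{W}^{N,N+1}$ decouples into the ranges $x'_1\in\{0,\dots,y'_1\}$, $x'_j\in\{y'_{j-1}+1,\dots,y'_j\}$ for $2\le j\le N$, and $x'_{N+1}\in\{y'_N+1,y'_N+2,\dots\}$, and since $x'_j$ enters only the $j$-th column (those built from $\mathsf{A}_t$ and $\mathsf{C}_t$), the sum can be carried out column by column by multilinearity of the determinant. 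Each such sum telescopes the backward discrete derivative $\nabla^-_{x'_j}$ — using the conventions $y'_0=-1$, $y'_{N+1}=+\infty$ together with $e^{t\mathsf{L}}\mathbf{1}_{[\![ 0,-1]\!]}\equiv 0$ and $e^{t\mathsf{L}}\mathbf{1}_{[\![ 0,\infty]\!]}\equiv 1$, the latter by non-explosiveness under $(\mathsf{UB})$ — and produces in the $j$-th column the difference of two ``cumulative'' vectors whose $x$-entries have the form $e^{t\mathsf{L}}\mathbf{1}_{[\![ 0,y'_j]\!]}(x_i)$ and whose $y$-entries are proportional to entries of $\mathsf{D}_t$. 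A telescoping round of column operations replaces these by the cumulative vectors $v_1,\dots,v_{N+1}$; subtracting $\lambda(y'_j)$ times the $j$-th $\mathsf{B}_t/\mathsf{D}_t$ column from $v_j$ (for $1\le j\le N$) turns its $x$-part into $\mathbf{1}(j\ge i)$ and kills its $y$-part; a final differencing turns the first $N+1$ columns into the standard basis vectors $\mathsf{e}_1,\dots,\mathsf{e}_{N+1}$ supported on the $x$-rows. The determinant then collapses to the bottom-right $N\times N$ block $\det\big(\mathsf{D}_t(y_i,y'_j)\big)_{i,j=1}^N$, which equals $\det\big(e^{t\mathsf{L}}(y_i,y'_j)\big)_{i,j=1}^N=\mathcal{P}_t^N(y,y')$ by the second representation of $\mathsf{D}_t$ in Definition \ref{TwoLevelKM}, i.e.\ by Lemma \ref{DualityLemma}. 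Positivity of both sides is immediate (e.g.\ from Lemma \ref{SubstochasticLemma}, or from the probabilistic meaning of $\mathsf{U}_t$ and $\mathcal{P}_t^N$).

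There is no genuine obstacle here; the identity is really just a bookkeeping statement dressed up in operator notation. The only points that need care are the two boundary columns $j=1$ and $j=N+1$ — in particular verifying that the unbounded sum $\sum_{x'_{N+1}>y'_N}$ converges and telescopes correctly, which is exactly where non-explosiveness of the pure-birth chain under $(\mathsf{UB})$ enters — and checking that the indicated sequence of column operations is legitimate. Everything else is routine multilinear algebra, already implicit in the proof of Lemma \ref{SubstochasticLemma}.
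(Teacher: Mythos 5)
Your proof is correct and follows the paper's route exactly: the paper's proof of Proposition \ref{PropInter1} simply refers to the computation in the proof of Lemma \ref{SubstochasticLemma}, which is the very column-by-column summation over $x'$, the two telescoping identities, and the row-column operations you spell out, together with the boundary conventions $y'_0=-1$, $y'_{N+1}=+\infty$ and non-explosiveness for the last column. The one superfluous bit is the final differencing down to standard basis vectors — once the $y$-rows of the first $N+1$ columns vanish and the $x$-block is upper unitriangular, block-triangularity already collapses the determinant to $\det\bigl(\mathsf{D}_t(y_i,y'_j)\bigr)_{i,j=1}^N=\mathcal{P}_t^N(y,y')$.
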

\begin{proof}
This computation is implicit in the proof of Lemma \ref{SubstochasticLemma}.
\end{proof}

Similarly we have:

\begin{prop}\label{PropInter2}For $t\ge 0$, we have the equalities of positive kernels,
 \begin{align}
 \left[\mathcal{P}_t^{N+1}\Lambda_{N}^{N+1}\right](x,(y',x'))&=\left[\Lambda_{N}^{N+1}\mathsf{U}_t^{N,N+1}\right](x,(y',x')), \ \ x\in \mathbb{W}^{N+1},(y',x') \in \mathbb{W}^{N,N+1}\label{intermediateintertwining1}.
 \end{align}
 \end{prop}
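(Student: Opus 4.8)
The plan is to prove the intertwining \eqref{intermediateintertwining1} by the same circle of ideas used for Proposition \ref{PropInter1}, namely by exploiting the explicit block-determinant structure of $\mathsf{U}_t^{N,N+1}$ together with multilinearity of the determinant, but now summing over the \emph{lower} interlacing variables rather than over the upper ones. Concretely, fix $x\in\mathbb{W}^{N+1}$ and $(y',x')\in\mathbb{W}^{N,N+1}$. On the left-hand side,
\[
\left[\mathcal{P}_t^{N+1}\Lambda_N^{N+1}\right](x,(y',x'))=\mathbf{1}(x'\equiv x'?)\cdots
\]
wait — more carefully, $\mathcal{P}_t^{N+1}$ acts on the $\mathbb{W}^{N+1}$-variable and produces a point $z\in\mathbb{W}^{N+1}$, then $\Lambda_N^{N+1}$ maps $z$ to $(y',x')$ with density $\prod_{i=1}^N\lambda(y_i')^{-1}\mathbf{1}(y'\prec z)$ supported on $z\equiv x'$. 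Thus the LHS equals $\prod_{i=1}^N\lambda(y_i')^{-1}\,\mathbf{1}(y'\prec x')\det\!\big(e^{t\mathsf{L}}(x_i,x_j')\big)_{i,j=1}^{N+1}$. On the right-hand side, $\Lambda_N^{N+1}$ maps $x$ to a pair $(y,z)$ with $z\equiv x$ and weight $\prod_i\lambda(y_i)^{-1}\mathbf{1}(y\prec x)$, and then $\mathsf{U}_t^{N,N+1}$ is applied; so the RHS is $\prod_{i=1}^N\lambda(y_i)^{-1}$ times a sum over $y$ with $y\prec x$ of the block determinant $\mathsf{U}_t^{N,N+1}[(y,x),(y',x')]$.

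The key computational step is therefore to show
\[
\sum_{\{y:\,y\prec x\}}\frac{1}{\prod_{i=1}^N\lambda(y_i)}\,\mathsf{U}_t^{N,N+1}\big[(y,x),(y',x')\big]
=\frac{\mathbf{1}(y'\prec x')}{\prod_{i=1}^N\lambda(y_i')}\det\!\big(e^{t\mathsf{L}}(x_i,x_j')\big)_{i,j=1}^{N+1}.
\]
I would carry this out by first observing that the interlacing constraint $y\prec x$ factors as $y_i\in\{x_i,\dots,x_{i+1}-1\}$, so the sum over $y$ is a product of one-dimensional sums, one per row of the $\mathsf{C}_t$- and $\mathsf{D}_t$-blocks. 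Using multilinearity of the determinant in those $N$ rows, I push the sum $\sum_{y_i=x_i}^{x_{i+1}-1}\lambda(y_i)^{-1}(\cdot)$ inside. The relevant summation identity is a telescoping one: for the $\mathsf{D}_t$-entry, $\sum_{y_i=x_i}^{x_{i+1}-1}\lambda(y_i)^{-1}\cdot\lambda(y_i)\nabla^+_{y_i}g(y_i)=g(x_{i+1})-g(x_i)$ with $g(y_i)=-\lambda(y_j')^{-1}e^{t\mathsf{L}}\mathbf{1}_{[\![0,y_j']\!]}(y_i)$, which produces exactly $\lambda(y_j')^{-1}\big(e^{t\mathsf{L}}\mathbf{1}_{[\![0,y_j']\!]}(x_{i+1})-e^{t\mathsf{L}}\mathbf{1}_{[\![0,y_j']\!]}(x_i)\big)$; similarly for the $\mathsf{C}_t$-entry the factor $\lambda(y_i)^{-1}$ cancels the $\lambda(y_i)$ and telescopes $\nabla^+_{y_i}\nabla^-_{x_j'}e^{t\mathsf{L}}\mathbf{1}_{[\![0,x_j']\!]}(y_i)$ into a difference of $\mathsf{A}_t$-type entries. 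After this, each of the last $N$ rows of the summed block matrix becomes a difference of two consecutive rows of the $(N+1)\times(N+1)$ matrix whose $(i,j)$-entry is $-\nabla^-_{x_j'}e^{t\mathsf{L}}\mathbf{1}_{[\![0,x_j']\!]}(x_i)=e^{t\mathsf{L}}(x_i,x_j')$ in the $x'$-columns, and whose $y'$-columns are the original $\mathsf{B}_t$-rows. Then row–column operations — the same kind used at the end of the proof of Lemma \ref{SubstochasticLemma} and in establishing \eqref{equality1} — collapse the $\mathsf{B}_t$-columns and the telescoped rows, leaving precisely $\mathbf{1}(y'\prec x')\prod_i\lambda(y_i')^{-1}$ times $\det(e^{t\mathsf{L}}(x_i,x_j'))_{i,j=1}^{N+1}$, up to the overall sign $(-1)^N(-1)^{N+1}$ bookkeeping that already appears in the coalescing-representation formula in Lemma \ref{SubstochasticLemma}.

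The main obstacle is the combinatorial bookkeeping: tracking the signs and the row/column reorderings so that the summed block matrix genuinely reorganizes into a single $(N+1)\times(N+1)$ determinant, and checking that the boundary terms of the telescoping sums (the $\mathbf{1}(j\ge i)$ correction built into $\mathsf{B}_t$, and the contributions at $y_i=x_i$ versus $y_i=x_{i+1}-1$) conspire to reproduce the $y'$-block and enforce $\mathbf{1}(y'\prec x')$ rather than leaving spurious terms. As in Proposition \ref{PropInter1}, I expect this entire computation to be "implicit in the proof of Lemma \ref{SubstochasticLemma}" — the coalescing-random-walk representation
$\mathsf{U}_t^{N,N+1}[(y,x),(y',x')]=\tfrac{\prod_i\lambda(y_i)}{\prod_i\lambda(y_i')}(-1)^N\nabla^+_{y_1}\cdots\nabla^+_{y_N}(-1)^{N+1}\nabla^-_{x_1'}\cdots\nabla^-_{x_{N+1}'}\,\mathbb{P}(\cdots)$
is the natural device: summing $\prod_i\lambda(y_i)^{-1}$ against $\prod_i\lambda(y_i)\,\nabla^+_{y_i}$ over $y\prec x$ telescopes the finite differences in the $y$-variables and evaluates the coalescing-walk probability at the endpoints, which is exactly what reconstitutes $\mathcal{P}_t^{N+1}$ composed with the $\Lambda_N^{N+1}$ on the left. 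So in the write-up I would simply invoke that representation and perform the telescoping sum, rather than redo the row operations from scratch.
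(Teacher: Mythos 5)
Your proposal is correct and matches the paper's proof: sum over each $y_i\in\{x_i,\dots,x_{i+1}-1\}$ by multilinearity, telescope the $\nabla^+_{y_i}$-derivatives in the $\mathsf{C}_t$- and $\mathsf{D}_t$-rows to obtain exactly the two summation identities you wrote, and finish with row-column operations. The one small imprecision is your closing expectation that this is ``implicit in the proof of Lemma~\ref{SubstochasticLemma}''; that remark applies to Proposition~\ref{PropInter1} (which marginalizes over $x'$, the sum that lemma actually carries out), whereas Proposition~\ref{PropInter2} marginalizes over $y$ and so requires the separate short computation you have in fact supplied.
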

\begin{proof}
We take the sum $\sum_{\{y:(y,x)\in \mathbb{W}^{N,N+1} \}}^{}$ in the explicit form of the kernels and use multilinearity of the determinant. Then, the statement follows from the relations below 
 \begin{align*}
\sum_{y_i=x_i}^{x_{i+1}-1}\lambda(y_i)^{-1}\mathsf{C}_t(y,x')_{ij}&=\nabla^+_{x'_j} e^{t\mathsf{L}} \mathbf{1}_{[\![ 0,x_j']\!]}(x_{i+1})-\nabla^+_{x'_j} e^{t\mathsf{L} } \mathbf{1}_{[\![ 0,x_j']\!]}(x_{i}),\\
\sum_{y_i=x_i}^{x_{i+1}-1}\lambda(y_i)^{-1}\mathsf{D}_t(y,y')_{ij}&=-\lambda(y'_j)^{-1} e^{t\mathsf{L}} \mathbf{1}_{[\![ 0,y_j']\!]}(x_{i+1})+ \lambda(y'_j)^{-1}e^{t\mathsf{L}} \mathbf{1}_{[\![ 0,y_j']\!]}(x_{i})
 \end{align*}
 and simple row-column operations.
\end{proof}

Propositions \ref{PropInter1} and \ref{PropInter2} above readily imply the following two results:

\begin{prop}\label{KMIntertwining}
For $t\ge0$ we have:
\begin{align}
\left[\mathcal{P}_t^{N+1}\Lambda_N^{N+1}\right](x,y')=\left[\Lambda_N^{N+1}\mathcal{P}_t^N\right](x,y'), \ \ x\in \mathbb{W}^{N+1},y'\in \mathbb{W}^N.
\end{align}
\end{prop}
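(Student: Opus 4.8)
The plan is to derive Proposition~\ref{KMIntertwining} as an immediate corollary of Propositions~\ref{PropInter1} and~\ref{PropInter2}, exploiting the fact that $\Lambda_N^{N+1}$ is really two kernels in disguise: one from $\mathbb{W}^{N+1}$ to $\mathbb{W}^{N,N+1}$ and one from $\mathbb{W}^{N,N+1}$ to $\mathbb{W}^N$, related through the projection operator $\Pi_N^{N+1}$. Concretely, as noted in the definition of $\Lambda_N^{N+1}$, for $x\in\mathbb{W}^{N+1}$ the kernel $\Lambda_N^{N+1}(x,(y,z))$ is supported on pairs with $z\equiv x$ and equals $\Lambda_N^{N+1}(x,y)$; and since that value depends only on the $y$-coordinate, one has the factorization $\Lambda_N^{N+1}(x,y') = \big[\Lambda_N^{N+1}\Pi_N^{N+1}g\big]$-type identities, or more usefully, the adjoint statement that composing $\Lambda_N^{N+1}$ (as a kernel into $\mathbb{W}^{N,N+1}$) with $\Pi_N^{N+1}$ (as an operator pulling back functions on $\mathbb{W}^N$) recovers $\Lambda_N^{N+1}$ (as a kernel into $\mathbb{W}^N$).

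First I would fix $x\in\mathbb{W}^{N+1}$, $y'\in\mathbb{W}^N$, and a test function — or equivalently just track the kernel identity by summing against $\delta_{y'}$. Apply Proposition~\ref{PropInter2} with the second argument $(y',x')$ summed over all admissible $x'$ (i.e. all $x'$ with $(y',x')\in\mathbb{W}^{N,N+1}$); on the left-hand side $\big[\mathcal{P}_t^{N+1}\Lambda_N^{N+1}\big](x,(y',x'))$ this sum over $x'$ turns $\Lambda_N^{N+1}(\cdot,(y',x'))$ back into $\Lambda_N^{N+1}(\cdot,y')$ because $\Lambda_N^{N+1}$ as a kernel into $\mathbb{W}^{N,N+1}$ is supported on $z\equiv$ (the $\mathbb{W}^{N+1}$-argument of the intermediate variable) and carries the same weight $\prod 1/\lambda(y_i)$ regardless, so $\sum_{x'}\mathcal{P}_t^{N+1}\Lambda_N^{N+1}(x,(y',x')) = [\mathcal{P}_t^{N+1}\Lambda_N^{N+1}](x,y')$. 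On the right-hand side $\big[\Lambda_N^{N+1}\mathsf{U}_t^{N,N+1}\big](x,(y',x'))$, summing over $x'$ uses the marginalization identity established inside the proof of Lemma~\ref{SubstochasticLemma}, namely $\sum_{\{x':(y',x')\in\mathbb{W}^{N,N+1}\}}\mathsf{U}_t^{N,N+1}[(y,x),(y',x')] = \mathcal{P}_t^N(y,y')$; combined with $\Lambda_N^{N+1}$ depending only on the $y$-coordinate this yields $[\Lambda_N^{N+1}\mathcal{P}_t^N](x,y')$. Equating the two gives exactly the claim. (Alternatively, and more slickly, one composes Proposition~\ref{PropInter2} on the right with $\Pi_N^{N+1}$ and uses Proposition~\ref{PropInter1}: $\mathcal{P}_t^{N+1}\Lambda_N^{N+1}\Pi_N^{N+1} = \Lambda_N^{N+1}\mathsf{U}_t^{N,N+1}\Pi_N^{N+1} = \Lambda_N^{N+1}\Pi_N^{N+1}\mathcal{P}_t^N$, and then one observes $\Lambda_N^{N+1}\Pi_N^{N+1} = \Lambda_N^{N+1}$ in the appropriate sense, since $\Lambda_N^{N+1}$ is already supported on configurations determined by $x$ up to the $y$-part.)

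The only genuinely delicate point — the "main obstacle," though it is minor — is bookkeeping the three different roles of $\Lambda_N^{N+1}$ and checking that the support condition $z\equiv x$ makes the sum over $x'$ collapse cleanly without extra combinatorial factors; one must verify that for fixed $x\in\mathbb{W}^{N+1}$ there is exactly one $x'$ (namely $x'=x$) contributing to $\Lambda_N^{N+1}(x,(y',x'))$, which is precisely the content of the remark that $\Lambda_N^{N+1}$ viewed as a kernel into $\mathbb{W}^{N,N+1}$ is supported on $z\equiv x$. Everything else is formal manipulation of intertwinings, and given Propositions~\ref{PropInter1} and~\ref{PropInter2} the proof is essentially a one-line composition, which is presumably why the authors will state it as an immediate consequence.
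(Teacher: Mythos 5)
Your proposal is correct and matches the paper's own one-line argument: the paper simply states \emph{combine Propositions~\ref{PropInter1} and~\ref{PropInter2}, noting that $\Lambda_N^{N+1}\Pi_N^{N+1}\equiv\Lambda_N^{N+1}$}, which is precisely the ``slick'' compositional version you give at the end. Your main-body computation, explicitly summing out $x'$ and invoking the marginalization identity from the proof of Lemma~\ref{SubstochasticLemma}, is just an unpacking of the same identity $\Lambda_N^{N+1}\Pi_N^{N+1}=\Lambda_N^{N+1}$ applied inside the intertwining chain.
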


\begin{proof}
Combine Propositions \ref{PropInter1} and \ref{PropInter2}, noting that $\Lambda_N^{N+1}\Pi_N^{N+1}\equiv \Lambda_N^{N+1}$.
\end{proof}

\begin{prop}\label{HarmonicFunctionsProp}
The function $\mathfrak{h}_N(x)=\mathfrak{h}_N(x;\lambda)$ is a positive harmonic function for the semigroup $\left(\mathcal{P}_t^N;t\ge 0\right)$. Moreover, the function $\mathfrak{h}_{(N,N+1)}((y,x);\lambda)$ defined by $\mathfrak{h}_{(N,N+1)}((y,x);\lambda)=\mathfrak{h}_{N}(y;\lambda)$ is a positive harmonic function for the semigroup $\left(\mathsf{U}_t^{N,N+1};t\ge 0\right)$.
\end{prop}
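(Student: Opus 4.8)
The plan is to prove both assertions simultaneously by induction on $N$, using the intertwining relations of Propositions \ref{KMIntertwining} and \ref{PropInter1} together with the recursive definition $\mathfrak{h}_{N+1}=\Lambda_N^{N+1}\mathfrak{h}_N$. Throughout, ``harmonic'' is understood in the integrated sense $\mathcal{P}_t^N\mathfrak{h}_N=\mathfrak{h}_N$ for all $t\ge 0$ (and likewise for $\mathsf{U}_t^{N,N+1}$).

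First I would dispose of finiteness and strict positivity. Finiteness follows by induction: $\mathfrak{h}_1\equiv 1$, and $\mathfrak{h}_{N+1}(x)=\sum_{y\prec x}\prod_{i=1}^N\lambda(y_i)^{-1}\mathfrak{h}_N(y)$ is a finite sum of finite terms (one may alternatively invoke the determinant representation of Lemma \ref{DeterminantRepresentationHarmonic}). For strict positivity on $\mathbb{W}^N$: $\mathfrak{h}_1>0$, and since for every $x\in\mathbb{W}^{N+1}$ the set $\{y\in\mathbb{W}^N:y\prec x\}$ is non-empty and $\Lambda_N^{N+1}(x,\cdot)$ is strictly positive there, positivity of $\mathfrak{h}_{N+1}$ is inherited from that of $\mathfrak{h}_N$.

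For the harmonicity of $\mathfrak{h}_N$ with respect to $(\mathcal{P}_t^N)$, the base case $N=1$ is exactly $e^{t\mathsf{L}}\mathbf{1}=\mathbf{1}$, which holds because the pure-birth chain is non-explosive under $(\mathsf{UB})$ (compare with a Poisson process of rate $M$). For the inductive step, assuming $\mathcal{P}_t^N\mathfrak{h}_N=\mathfrak{h}_N$, I would apply Proposition \ref{KMIntertwining} after pairing both sides with $\mathfrak{h}_N$:
\[
\mathcal{P}_t^{N+1}\mathfrak{h}_{N+1}=\mathcal{P}_t^{N+1}\Lambda_N^{N+1}\mathfrak{h}_N=\Lambda_N^{N+1}\mathcal{P}_t^N\mathfrak{h}_N=\Lambda_N^{N+1}\mathfrak{h}_N=\mathfrak{h}_{N+1}.
\]
Every rearrangement of these nonnegative sums is legitimate by Tonelli's theorem; computing the chain from the right-hand end shows the common value equals $\mathfrak{h}_{N+1}(x)<\infty$, which retroactively justifies absolute convergence throughout.

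Finally, since $\mathfrak{h}_{(N,N+1)}=\Pi_N^{N+1}\mathfrak{h}_N$, I would invoke Proposition \ref{PropInter1} applied to the nonnegative function $\mathfrak{h}_N$: pairing the kernel identity with $\mathfrak{h}_N(y')$ (the interchange again controlled by Tonelli, with convergence ensured by $\sum_{x'}\mathsf{U}_t^{N,N+1}[(y,x),(y',x')]=\mathcal{P}_t^N(y,y')$ from the proof of Lemma \ref{SubstochasticLemma}) gives
\[
\mathsf{U}_t^{N,N+1}\mathfrak{h}_{(N,N+1)}=\mathsf{U}_t^{N,N+1}\Pi_N^{N+1}\mathfrak{h}_N=\Pi_N^{N+1}\mathcal{P}_t^N\mathfrak{h}_N=\Pi_N^{N+1}\mathfrak{h}_N=\mathfrak{h}_{(N,N+1)},
\]
using the first part in the penultimate step; positivity is immediate. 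I do not anticipate a genuine obstacle: the substantive content has been front-loaded into the intertwinings, and the only points needing care are the (routine) Fubini--Tonelli justifications and the non-explosiveness input for the base case.
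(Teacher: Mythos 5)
Your proof is correct and follows the paper's intended one-line strategy: inductively push harmonicity across the intertwining relations, using $\mathcal{P}_t^{N+1}\Lambda_N^{N+1}=\Lambda_N^{N+1}\mathcal{P}_t^N$ for the first claim, and then transferring from $\mathcal{P}_t^N$ to $\mathsf{U}_t^{N,N+1}$ for the second. One small but notable point: for the second claim you invoke Proposition \ref{PropInter1} (the $\Pi_N^{N+1}$ intertwining), whereas the paper's proof cites Proposition \ref{PropInter2} (the $\Lambda_N^{N+1}$ intertwining); your choice is the correct one, since $\Pi_N^{N+1}\mathcal{P}_t^N=\mathsf{U}_t^{N,N+1}\Pi_N^{N+1}$ applied to $\mathfrak{h}_N$ gives $\mathsf{U}_t^{N,N+1}\mathfrak{h}_{(N,N+1)}=\Pi_N^{N+1}\mathcal{P}_t^N\mathfrak{h}_N=\Pi_N^{N+1}\mathfrak{h}_N=\mathfrak{h}_{(N,N+1)}$ directly, while $\mathcal{P}_t^{N+1}\Lambda_N^{N+1}=\Lambda_N^{N+1}\mathsf{U}_t^{N,N+1}$ pushes harmonicity the wrong way (from $\mathsf{U}_t^{N,N+1}$ to $\mathcal{P}_t^{N+1}$) and would only yield $\Lambda_N^{N+1}\bigl(\mathsf{U}_t^{N,N+1}\mathfrak{h}_{(N,N+1)}-\mathfrak{h}_{(N,N+1)}\bigr)=0$ without closing the argument.
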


\begin{proof}
Inductively apply Propositions \ref{KMIntertwining} and \ref{PropInter2} respectively.
\end{proof}

In order to proceed we require a general abstract definition. For a possibly sub-Markov semigroup $\left(\mathsf{P}(t);t \ge 0\right)$ having a strictly positive eigenfunction $\mathsf{h}$ with eigenvalue $e^{\mathsf{c}t}$ (i.e. $\mathsf{P}(t)\mathsf{h}=e^{\mathsf{c}t}\mathsf{h}$) we define its Doob $h$-transform by $\left(e^{-\mathsf{c}t}\mathsf{h}^{-1}\circ \mathsf{P}(t)\circ \mathsf{h};t\ge 0\right)$. We note that this is an honest Markovian semigroup. Thus, Proposition \ref{HarmonicFunctionsProp} allows us to correctly define the Doob h-transformed versions of the semigroups and kernels above:
\begin{align}
\mathfrak{L}_{N}^{N+1}(x,y)&=\frac{\mathfrak{h}_N(y)}{\mathfrak{h}_{N+1}(x)}\prod_{n=1}^{N}\frac{1}{\lambda(y_i)}\mathbf{1}(y\prec x), \ x\in \mathbb{W}^{N+1},y \in \mathbb{W}^N,\label{TransformedKernel}\\
\mathfrak{P}^N_t(x,y)&=\frac{\mathfrak{h}_N(y)}{\mathfrak{h}_N(x)}\det\left(e^{t\mathsf{L}}(x_i,y_j)\right)_{i,j=1}^N, \ t\ge 0, x,y \in \mathbb{W}^N,\label{TransformedSemigroup}\\
\mathfrak{U}_t^{N,N+1}\left[(y,x),(y',x')\right]&=\frac{\mathfrak{h}_{(N,N+1)}(y',x')}{\mathfrak{h}_{(N,N+1)}(y,x)}\mathsf{U}_t^{N,N+1}\left[(y,x),(y',x')\right], \ t\ge 0, (y,x),(y',x')\in \mathbb{W}^{N,N+1}.
\end{align}

Note that, by their very definition, all of these are now Markovian. Moreover, as we have done previously, we can also view $\mathfrak{L}_N^{N+1}$ as a Markov kernel from $\mathbb{W}^N$ to $\mathbb{W}^{N,N+1}$.  We observe that for the distinguished special case $\lambda(\cdot)\equiv 1$,  $\left(\mathfrak{P}_t^{N};t\ge 0\right)$ is the semigroup of the well-known Charlier process, see \cite{NonCollidingWalks}, \cite{OConnellJPhys}, \cite{OConnellTams}, \cite{BorodinFerrari} the discrete analogue of Dyson's Brownian motion \cite{DysonBrownian}. With all these preliminaries in place we have:

\begin{prop}\label{DoobTransformedIntertwinings} For $t\ge 0$, we have the intertwining relations between Markov semigroups:
 \begin{align}
 \left[\Pi_{N}^{N+1}\mathfrak{P}_t^{N}\right]((y,x),y')&=\left[\mathfrak{U}_t^{N,N+1}\Pi_{N}^{N+1}\right]((y,x),y'), \ (y,x)\in \mathbb{W}^{N,N+1},y'\in \mathbb{W}^{N+1},\\
 \left[\mathfrak{P}_t^{N+1}\mathfrak{L}_{N}^{N+1}\right](x,(y',x'))&=\left[\mathfrak{L}_{N}^{N+1}\mathfrak{U}_t^{N,N+1}\right](x,(y',x')), \ \ x\in \mathbb{W}^{N+1},(y',x') \in \mathbb{W}^{N,N+1},\label{MainIntertwining}\\
 \left[\mathfrak{P}_t^{N+1}\mathfrak{L}_N^{N+1}\right](x,y')&=\left[\mathfrak{L}_N^{N+1}\mathfrak{P}_t^N\right](x,y'), \ \ x\in \mathbb{W}^{N+1},y'\in \mathbb{W}^N.\label{KMIntertwiningTransformed}
 \end{align}
\end{prop}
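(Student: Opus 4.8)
The plan is to deduce Proposition \ref{DoobTransformedIntertwinings} directly from the undoobed intertwinings already established, namely Propositions \ref{PropInter1}, \ref{PropInter2}, \ref{KMIntertwining}, by conjugating everything with the harmonic functions $\mathfrak{h}_N$ and $\mathfrak{h}_{(N,N+1)}$. The key observation is that the Doob $h$-transform is \emph{functorial} with respect to intertwinings: if $P\Lambda = \Lambda Q$ and $h$, $h'$ are harmonic for $P$, $Q$ respectively with $\Lambda h' = h$ (so that $h$ is ``transported'' by $\Lambda$ to $h'$), then the transformed kernels $\widehat P = h^{-1} P h$, $\widehat Q = (h')^{-1} Q h'$ and the transformed link $\widehat\Lambda(x,y) = h'(y) h(x)^{-1}\Lambda(x,y)$ again satisfy $\widehat P \widehat\Lambda = \widehat\Lambda \widehat Q$. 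This is a one-line computation: $\widehat P\widehat\Lambda = h^{-1}Ph \cdot h'^{-1}\Lambda h' \cdot$ \dots wait, more carefully, $(\widehat P\widehat\Lambda)(x,z) = \sum_{w,y} h(x)^{-1}P(x,w)h(w)\cdot h(w)^{-1}h'(z)\Lambda(w,z)$, hmm — the cleanest way is to note $\widehat\Lambda = h^{-1}\circ \Lambda \circ h'$ as an operator (diagonal multiplication), so $\widehat P\widehat\Lambda = h^{-1}Ph \, h^{-1}\Lambda h' = h^{-1}P\Lambda h' = h^{-1}\Lambda Q h' = h^{-1}\Lambda h' \, h'^{-1}Q h' = \widehat\Lambda\widehat Q$, where all the $h$'s on composite objects cancel telescopically.

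So the steps are as follows. First I would record the elementary operator identity just displayed, valid whenever the ``compatibility'' $\Lambda h' = h$ holds (and similarly when one intertwines $\mathsf{U}_t$ with $\mathcal P_t^N$ via the projection $\Pi_N^{N+1}$, where the relevant harmonic functions are $\mathfrak h_{(N,N+1)}$ and $\mathfrak h_N$ and the compatibility is the trivial $\mathfrak h_{(N,N+1)}(y,x) = \mathfrak h_N(y) = [\Pi_N^{N+1}\mathfrak h_N](y,x)$). Second, I would check the three compatibility relations that make the conjugation legal: (i) $\Lambda_N^{N+1}\mathfrak h_N = \mathfrak h_{N+1}$, which is precisely the defining recursion of $\mathfrak h_{N+1}$; (ii) for the $\Pi$-intertwining, $\Pi_N^{N+1}\mathfrak h_N = \mathfrak h_{(N,N+1)}$ by definition of $\mathfrak h_{(N,N+1)}$; and (iii) that $\mathfrak h_N$ is genuinely harmonic (eigenvalue $1$, i.e.\ $\mathsf{c}=0$) for $\mathcal P_t^N$ and $\mathfrak h_{(N,N+1)}$ for $\mathsf U_t^{N,N+1}$, which is exactly Proposition \ref{HarmonicFunctionsProp}; this is what guarantees $\mathfrak P_t^N$, $\mathfrak U_t^{N,N+1}$, $\mathfrak L_N^{N+1}$ are honest Markov kernels and that the conjugation produces no stray eigenvalue factors. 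Third, I would apply the operator identity of Step 1 to each of the three undoobed relations: conjugating Proposition \ref{PropInter1} by $\mathfrak h_{(N,N+1)}$ on the left and $\mathfrak h_{N+1}$ on the right (using compatibility (i) in the form $\Pi\mathfrak h = \mathfrak h$... actually here the link is $\Pi_N^{N+1}$ mapping $\mathbb W^{N,N+1}$-functions... I need the compatibility $\Pi_N^{N+1}$ carries the pair-harmonic function to... let me just say: using compatibility (ii)) yields the first display; conjugating Proposition \ref{PropInter2} yields \eqref{MainIntertwining}; and conjugating Proposition \ref{KMIntertwining} by $\mathfrak h_{N+1}$ and $\mathfrak h_N$, using compatibility (i), yields \eqref{KMIntertwiningTransformed}. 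Alternatively \eqref{KMIntertwiningTransformed} follows from the first two doobed relations exactly as Proposition \ref{KMIntertwining} followed from Propositions \ref{PropInter1}, \ref{PropInter2}, using $\mathfrak L_N^{N+1}\Pi_N^{N+1} = \mathfrak L_N^{N+1}$.

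The only genuine content beyond bookkeeping is making sure the conjugation is \emph{consistent across the three relations} — i.e.\ that the same harmonic function $\mathfrak h_{(N,N+1)}$ used to transform $\mathsf U_t$ is the one transported from $\mathfrak h_N$ via $\Pi_N^{N+1}$ \emph{and} from $\mathfrak h_{N+1}$ via $\Lambda_N^{N+1}$. The first is immediate from the definition $\mathfrak h_{(N,N+1)}(y,x) := \mathfrak h_N(y)$; the second requires $[\Lambda_N^{N+1}\mathfrak h_{(N,N+1)}](x) = \mathfrak h_{N+1}(x)$, and since $\Lambda_N^{N+1}(x,(y,z))$ is supported on $z\equiv x$ and equals $\Lambda_N^{N+1}(x,y)$, this reduces to $[\Lambda_N^{N+1}\mathfrak h_N](x) = \mathfrak h_{N+1}(x)$, again the defining recursion. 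So there is no real obstacle here; the ``hard part'' is purely notational — keeping straight which kernel lives on which state space and that the telescoping cancellation of $\mathfrak h$-factors goes through at the level of (possibly infinite but absolutely convergent, by \textsf{(UB)} and the finiteness of the interlacing sums) kernel compositions. I would therefore present the proof as: (1) the abstract conjugation lemma for intertwinings, stated in one line; (2) verification of the compatibilities, each a one-line reference to a definition or to Proposition \ref{HarmonicFunctionsProp}; (3) three applications, with \eqref{KMIntertwiningTransformed} obtained by composing the first two and using $\mathfrak L_N^{N+1}\Pi_N^{N+1}=\mathfrak L_N^{N+1}$, exactly paralleling the proof of Proposition \ref{KMIntertwining}.
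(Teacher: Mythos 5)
Your proposal is correct and is exactly the "straightforward consequence" argument the paper has in mind — the paper's proof consists of a single sentence citing Propositions \ref{PropInter1}, \ref{PropInter2} and \ref{KMIntertwining}, and your conjugation/telescoping calculation is the standard way of unpacking that. The abstract lemma you state (Doob transforms respect intertwinings whenever $\Lambda h' = h$), together with the compatibilities $\Lambda_N^{N+1}\mathfrak{h}_N = \mathfrak{h}_{N+1}$ (defining recursion) and $\Pi_N^{N+1}\mathfrak{h}_N = \mathfrak{h}_{(N,N+1)}$ (definition), is precisely what makes all the $\mathfrak{h}$-ratios cancel in the composed kernels, so nothing further is needed.
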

\begin{proof}
These relations are straightforward consequences of Propositions \ref{PropInter1}, \ref{PropInter2} and \ref{KMIntertwining} respectively.
\end{proof}

Observe that, the $h$-transform by $\mathfrak{h}_{(N,N+1)}(y,x)=\mathfrak{h}_N(y)$ conditions the $Y$-particles to never collide and the process with semigroup $\left(\mathfrak{U}_t^{N,N+1};t\ge0\right)$ has infinite lifetime. Under this change of measure the evolution of the $Y$-particles is autonomous with semigroup $\left(\mathfrak{P}_t^N;t\ge0\right)$, while the $X$-particles evolve as $N+1$ independent chains with generator $\mathsf{L}$ interacting with the $Y$-particles through the same push-block dynamics of Definition \ref{TwoLevelDynamics}. We now arrive at the main result of this section.

\begin{thm}\label{MarkovFunctionTheorem}
Consider a Markov process $\left(\left(Y(t),X(t)\right);t\ge0\right)$ in $\mathbb{W}^{N,N+1}$ with semigroup $\left(\mathfrak{U}_t^{N,N+1};t \ge 0\right)$. Let $\mathfrak{M}^{N+1}$ be a probability measure on $\mathbb{W}^{N+1}$. Assume $\left(\left(Y(t),X(t)\right);t\ge0\right)$ is initialized according to the probability measure with density $\mathfrak{M}^{N+1}(x)\mathfrak{L}_N^{N+1}(x,y)$ on $\mathbb{W}^{N,N+1}$. Then, the projection on the $X$-particles is distributed as a Markov process with semigroup $\left(\mathfrak{P}_t^{N+1};t \ge 0\right)$ and initial condition $\mathfrak{M}^{N+1}$. Moreover, for any fixed time $T\ge 0$, the conditional distribution of $\left(X(T),Y(T)\right)$ given $X(T)$ satisfies:
\begin{align}\label{ConditionalLaw}
\mathsf{Law}\left[\left(X(T),Y(T)\right)\big|X(T)\right]=\mathfrak{L}_N^{N+1}\left(X(T),\cdot\right).
\end{align}
\end{thm}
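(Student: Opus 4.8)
The plan is to recognize Theorem~\ref{MarkovFunctionTheorem} as an instance of the general \emph{Markov functions} / intertwining criterion (in the spirit of Rogers--Pitman and Diaconis--Fill), applied to the intertwining relation \eqref{MainIntertwining} between the Markov semigroups $\left(\mathfrak{P}_t^{N+1};t\ge0\right)$ on $\mathbb{W}^{N+1}$ and $\left(\mathfrak{U}_t^{N,N+1};t\ge0\right)$ on $\mathbb{W}^{N,N+1}$, with link kernel $\mathfrak{L}_N^{N+1}$. First I would record the two ingredients that are already in hand: the kernels $\mathfrak{P}_t^{N+1}$, $\mathfrak{U}_t^{N,N+1}$, $\mathfrak{L}_N^{N+1}$ are genuine Markov (stochastic) kernels by the Doob-transform discussion preceding the theorem, and they satisfy $\mathfrak{P}_t^{N+1}\mathfrak{L}_N^{N+1}=\mathfrak{L}_N^{N+1}\mathfrak{U}_t^{N,N+1}$ by Proposition~\ref{DoobTransformedIntertwinings}. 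I would also note the trivial but crucial compatibility that projecting $\mathfrak{L}_N^{N+1}(x,\cdot)$ onto the $X$-coordinate returns $\delta_x$, since $\mathfrak{L}_N^{N+1}$ is supported on pairs $(y,z)$ with $z\equiv x$; equivalently, if $\Pi^{X}$ denotes the projection $(y,x)\mapsto x$ then $\mathfrak{L}_N^{N+1}\,\Pi^{X}=\mathrm{Id}$ on $\mathbb{W}^{N+1}$.

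Then I would carry out the standard argument. Let $\nu_0(x,y)=\mathfrak{M}^{N+1}(x)\mathfrak{L}_N^{N+1}(x,y)$ be the initial law on $\mathbb{W}^{N,N+1}$, and let $\nu_t = \nu_0\,\mathfrak{U}_t^{N,N+1}$ be the law of $(Y(t),X(t))$. The claim to prove is the pair: (i) the $X$-marginal of $\nu_t$ equals $\mathfrak{M}^{N+1}\mathfrak{P}_t^{N+1}$, and more strongly the $X$-component is Markov with that semigroup, and (ii) the conditional law of $(X(t),Y(t))$ given $X(t)$ is $\mathfrak{L}_N^{N+1}(X(t),\cdot)$. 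Both follow by induction on time-increments from the single identity $\mathfrak{L}_N^{N+1}\,\mathfrak{U}_t^{N,N+1}=\mathfrak{P}_t^{N+1}\,\mathfrak{L}_N^{N+1}$: assuming the law at time $s$ is $\mu_s(x)\,\mathfrak{L}_N^{N+1}(x,\cdot)$ for some probability $\mu_s$ on $\mathbb{W}^{N+1}$, applying $\mathfrak{U}_{t}^{N,N+1}$ and using the intertwining gives that the law at time $s+t$ is $\left(\mu_s\mathfrak{P}_t^{N+1}\right)(x)\,\mathfrak{L}_N^{N+1}(x,\cdot)$, which has the same form with $\mu_{s+t}=\mu_s\mathfrak{P}_t^{N+1}$; unwinding, $\mu_t=\mathfrak{M}^{N+1}\mathfrak{P}_t^{N+1}$. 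Reading off the $X$-marginal via $\mathfrak{L}_N^{N+1}\Pi^{X}=\mathrm{Id}$ proves (i) at the level of one-dimensional marginals, and the displayed product form is exactly (ii) for a single fixed $T$. For the full Markov-process statement in (i) — that the $X$-process has the semigroup $\left(\mathfrak{P}_t^{N+1};t\ge0\right)$, not merely the right fixed-time marginals — I would apply the same computation to finite-dimensional distributions: condition successively at times $t_1<\dots<t_k$, at each step using that the conditional law of the full pair given the $X$-history is $\mathfrak{L}_N^{N+1}(X(t_i),\cdot)$ (which is a function of $X(t_i)$ only, i.e. the link is \enquote{deterministic on the observed coordinate}), then push forward by $\mathfrak{U}_{t_{i+1}-t_i}^{N,N+1}$ and intertwine. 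This is the classical Dynkin/Rogers--Pitman criterion and I would cite it rather than reprove it, pointing to the analogous arguments in \cite{RandomGrowthKarlinMcGregor}, \cite{InterlacingDiffusions}, \cite{Warren}, \cite{DiaconisFill}.

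Concretely, the steps in order: (1) verify $\mathfrak{L}_N^{N+1}$ is Markov and that $\mathfrak{L}_N^{N+1}\Pi^{X}=\mathrm{Id}$; (2) state the intertwining $\mathfrak{P}_t^{N+1}\mathfrak{L}_N^{N+1}=\mathfrak{L}_N^{N+1}\mathfrak{U}_t^{N,N+1}$ from Proposition~\ref{DoobTransformedIntertwinings}; (3) show by induction on a time-mesh that $\mathsf{Law}[(Y(t),X(t))]$ is of the form $\mu_t(x)\mathfrak{L}_N^{N+1}(x,y)$ with $\mu_t=\mathfrak{M}^{N+1}\mathfrak{P}_t^{N+1}$, which simultaneously yields \eqref{ConditionalLaw} at any fixed $T$; (4) upgrade to the finite-dimensional-distribution statement to conclude the $X$-process is Markov with semigroup $\left(\mathfrak{P}_t^{N+1};t\ge0\right)$, invoking the general Markov-functions theorem. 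The main obstacle is not any single computation — all the algebraic identities are supplied by the earlier propositions — but rather making the conditioning argument in step (4) clean: one must check that the \enquote{filtering} at each observation time genuinely collapses to $\mathfrak{L}_N^{N+1}(X(t_i),\cdot)$ and that no extra dependence on the $Y$-history sneaks in, i.e. that the hypotheses of the abstract intertwining-implies-Markov-function theorem are literally met (Markov links, the $\mathfrak{L}\Pi^{X}=\mathrm{Id}$ consistency, and the intertwining). I would handle this by quoting the precise form of that theorem from the references above, where exactly this setup — an interlacing two-level process with a deterministic-on-the-top-level link — is treated.
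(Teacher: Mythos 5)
Your proposal matches the paper's own argument: the paper likewise records the consistency $\mathfrak{L}_N^{N+1}\mathsf{S}=\mathrm{Id}$ (your $\mathfrak{L}_N^{N+1}\Pi^{X}=\mathrm{Id}$), invokes the intertwining (\ref{MainIntertwining}), and then cites Theorem~2 of Rogers--Pitman for the Markov-function statement and Remark~(ii) following it for the conditional-law identity (\ref{ConditionalLaw}). You additionally unfold the induction-on-time-increments argument that lives inside the Rogers--Pitman theorem, which is a faithful sketch of the cited result rather than a different route.
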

\begin{proof}
Let $\mathsf{S}$ be the operator induced by the projection on the $x$-coordinates:
\begin{align*}
\left[\mathsf{S}f\right](y,x)=\left[f\circ s\right](y,x), \ s(y,x)=x,
\end{align*}
(we do not indicate dependence on $N$). Observe that,
\begin{align*}
\mathfrak{L}_N^{N+1}\mathsf{S}=\textnormal{Id}, \ \textnormal{on} \ \mathbb{W}^{N+1}.
\end{align*}
Then, the first statement of the theorem, by virtue of the intertwining relation (\ref{MainIntertwining}), is an application of the theory of Markov functions due to Rogers and Pitman, see Theorem 2 in \cite{RogersPitman} (applied to the function $s$ above). Finally, for the conditional law statement (\ref{ConditionalLaw}) see Remark (ii) following Theorem 2 of \cite{RogersPitman}.
\end{proof}

\begin{rmk}\label{LevelInhomogeneousRemark2}
Assume we are in the setting of Remark \ref{LevelInhomogeneousRemark1}. Let $\mathsf{L}^1$ and $\mathsf{L}^2$ be two pure-birth chain generators:
\begin{align*}
\mathsf{L}^1_{x}=\left(\lambda(x)+\beta_1\right)\nabla_x^+, \ \mathsf{L}^2_{x}=\left(\lambda(x)+\beta_2\right)\nabla_x^+.
\end{align*}
We observe that the strictly positive eigenfunction $\mathsf{h}_{\beta_1}^{\beta_2}$ of $\mathsf{L}^2$ (with eigenvalue $\beta_1-\beta_2$) defined by:
\begin{align*}
\mathsf{h}_{\beta_1}^{\beta_2}(x)=p_x(\beta_2-\beta_1;\lambda(\cdot)+\alpha_2)=\prod_{l=0}^{x-1}\frac{\lambda(l)+\beta_1}{\lambda(l)+\beta_2}
\end{align*}
Doob h-transforms $\mathsf{L}^2$ to $\mathsf{L}^1$:
\begin{align*}
\left(\mathsf{h}_{\beta_1}^{\beta_2}\right)^{-1}\circ \mathsf{L}^2\circ \mathsf{h}_{\beta_1}^{\beta_2}+\left(\beta_2-\beta_1\right)\mathsf{I}=\mathsf{L}^1.
\end{align*}
We define, for $n\ge 1$:
\begin{align*}
\mathsf{\Lambda}_n^{(n+1,\alpha_{n+1})}(x,y)&=\prod_{i=1}^{n}\frac{1}{\lambda(y_i)+\alpha_{n+1}}\mathbf{1}(y\prec x),\\
\mathsf{P}_t^{(n,\alpha_n)}(x,y)&=\det \left(e^{t\left(\lambda(\cdot)\nabla_\cdot^++\alpha_n\right)}(x_i,y_j)\right)_{i,j=1}^n,\\
\mathsf{h}_{n+1}^{(\alpha_1,\dots,\alpha_{n+1})}(x_1,\dots,x_{n+1})&=\left[\mathsf{\Lambda}_n^{(n+1,\alpha_{n+1})}\prod_{i=1}^{n}\mathsf{h}_{\alpha_n}^{\alpha_{n+1}}(\cdot)\mathsf{h}_n^{(\alpha_1,\dots,\alpha_n)}(\cdot)\right](x_1,\dots,x_{n+1}), \ \mathsf{h}^{\alpha_1}_1\equiv 1.
\end{align*}
By an inductive argument, making use of Proposition \ref{KMIntertwining}, we can show that $\mathsf{h}_{n}^{(\alpha_1,\dots,\alpha_n)}$ is a strictly positive eigenfunction of $\left(\mathsf{P}_t^{(n,\alpha_n)};t\ge 0\right)$. Thus, we can consider the Doob h-transformed versions $\mathsf{P}_t^{(n,\alpha_n),\mathsf{h}_n^{(\alpha_1,\dots,\alpha_n)}}$ and $\mathsf{\Lambda}_n^{(n+1,\alpha_{n+1}),\mathsf{h}_n^{(\alpha_1,\dots,\alpha_n)}}$. Then, all of the results above have natural extensions involving these quantities (whose precise statements we omit) to the level inhomogeneous setting.
\end{rmk}

\subsection{Consistent multilevel dynamics}
We have the following multilevel extension of the results of the preceding subsection.
\begin{prop}\label{ConsistentMultilevelProp}
 Let $\left(\mathfrak{P}_{t}^{k};t \ge 0\right)$ and $\mathfrak{L}^{k}_{k-1}$ denote the semigroups and Markov kernels defined in (\ref{TransformedSemigroup}) and (\ref{TransformedKernel}) above and let $\mathfrak{M}^N(\cdot)$ be a probability measure on $\mathbb{W}^N$. Define the following Gibbs probability measure $\mathsf{M}_N$ on $\mathsf{GT}_N$ with density:
\begin{align}\label{GibbsTypeA}
\mathsf{M}_N(x^1,\dots,x^N)=\mathfrak{M}^N(x^N)\mathfrak{L}^{N}_{N-1}\left(x^N,x^{N-1}\right)\cdots \mathfrak{L}^{2}_{1}\left(x^2,x^{1}\right).
\end{align}
Consider the process $\left( \mathsf{X}_N\left(t;\mathsf{M}_N\right);t\ge0\right)=\left(\left( \mathsf{X}^1\left(t\right),\mathsf{X}^2\left(t\right),\dots,\mathsf{X}^N\left(t\right)\right);t\ge0\right)$ in Definition \ref{BFDynamics}. Then, for $1\le k \le N$ the projection  on the $k$-th level $\left(\mathsf{X}^{k}(t);t \ge 0\right)$ is distributed as a Markov process evolving according to $\left(\mathfrak{P}_t^{k};t \ge 0\right)$. Moreover, for any fixed $T\ge 0$, the law of $\left(\mathsf{X}^1(T),\dots, \mathsf{X}^{N}(T)\right)$ is given by the evolved Gibbs measure on $\mathsf{GT}_N$ :
\begin{align}\label{EvolvedGibbsTypeA}
\mathsf{Law}\left[\mathsf{X}_N\left(T;\mathsf{M}_N\right)\right]=\left[\mathfrak{M}^N\mathfrak{P}^{N}_T\right](\cdot)\mathfrak{L}^{N}_{N-1}\left(\cdot,\cdot\right)\cdots \mathfrak{L}^{2}_{1}\left(\cdot,\cdot\right).
\end{align}
\end{prop}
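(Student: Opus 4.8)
The plan is to prove this by induction on $N$, bootstrapping from the two-level result of Theorem \ref{MarkovFunctionTheorem} together with the consistency (autonomy of sub-patterns) built into Definition \ref{BFDynamics}. The base case $N=1$ is immediate since $\mathsf{M}_1 = \mathfrak{M}^1$ and $\mathsf{X}^1$ is a single chain with generator $\mathsf{L}$, which is exactly the Doob-transformed semigroup $\mathfrak{P}_t^1$ (the harmonic function $\mathfrak{h}_1$ being identically $1$).

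For the inductive step, suppose the statement holds for $\mathsf{GT}_N$, and consider the dynamics on $\mathsf{GT}_{N+1}$ started from the Gibbs measure $\mathsf{M}_{N+1}(x^1,\dots,x^{N+1}) = \mathfrak{M}^{N+1}(x^{N+1})\mathfrak{L}^{N+1}_N(x^{N+1},x^N)\cdots\mathfrak{L}^2_1(x^2,x^1)$. The first observation is that this measure factors as $\mathsf{M}_{N+1} = \big(\mathfrak{M}^{N+1}\otimes \mathfrak{L}^{N+1}_N\big)\cdot\big(\text{lower levels}\big)$, where conditionally on $x^{N+1}$ the lower $N$ levels are distributed according to the Gibbs measure $\mathsf{M}_N$ with top marginal $\mathfrak{M}^N(x^N) = [\mathfrak{M}^{N+1}\mathfrak{L}^{N+1}_N](x^N)$ after integrating out $x^{N+1}$; more precisely, the pair $(x^{N+1},x^N)$ is distributed as $\mathfrak{M}^{N+1}(x^{N+1})\mathfrak{L}^{N+1}_N(x^{N+1},x^N)$, which is exactly the initialization hypothesis of Theorem \ref{MarkovFunctionTheorem} for the two-level process on $\mathbb{W}^{N,N+1}$. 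The key structural input is that the $\mathsf{GT}_{N+1}$-dynamics, restricted to levels $N$ and $N+1$, is precisely the two-level push-block chain of Definition \ref{TwoLevelDynamics} (conditioned to have the $Y=x^{(N)}$-particles never collide — which holds almost surely here because the Gibbs structure and $\mathfrak{L}$-kernels live on the non-colliding Weyl chamber), and that the evolution of levels $1,\dots,N$ is an autonomous $\mathsf{GT}_N$-dynamics in its own right.

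The argument then runs as follows. First, applying Theorem \ref{MarkovFunctionTheorem} (with $N$ there being our $N$, and the initial law $\mathfrak{M}^{N+1}$): the projection on level $N+1$ evolves as a Markov process with semigroup $\mathfrak{P}_t^{N+1}$ and initial law $\mathfrak{M}^{N+1}$, and for each fixed $T$ the conditional law of $\mathsf{X}^{(N)}(T)$ given $\mathsf{X}^{(N+1)}(T)$ is $\mathfrak{L}^{N+1}_N(\mathsf{X}^{(N+1)}(T),\cdot)$. Next, I would argue that conditionally on the entire top-level trajectory $\big(\mathsf{X}^{(N+1)}(t)\big)_{t\le T}$ — or, what will turn out to suffice, conditionally on $\mathsf{X}^{(N)}(T)$ — the lower $N$ levels at time $T$ are distributed as a $\mathsf{GT}_N$-dynamics. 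The cleanest way is to note that the process $\big(\mathsf{X}^{(1)}(t),\dots,\mathsf{X}^{(N)}(t)\big)$ is itself an autonomous copy of the $\mathsf{GT}_N$-push-block dynamics started from $\mathsf{M}_N$ (with $\mathfrak{M}^N = \mathfrak{M}^{N+1}\mathfrak{L}^{N+1}_N$); apply the inductive hypothesis to it to conclude its law at time $T$ is $[\mathfrak{M}^N\mathfrak{P}_T^N](\cdot)\mathfrak{L}^N_{N-1}(\cdot,\cdot)\cdots\mathfrak{L}^2_1(\cdot,\cdot)$, and in particular the levels below $N$ are, conditionally on $\mathsf{X}^{(N)}(T) = x^N$, distributed as $\mathfrak{L}^N_{N-1}(x^N,\cdot)\cdots\mathfrak{L}^2_1(\cdot,\cdot)$. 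Combining the two conditional descriptions — $x^{N+1}\sim \mathfrak{M}^{N+1}\mathfrak{P}_T^{N+1}$, then $x^N\,|\,x^{N+1}\sim \mathfrak{L}^{N+1}_N(x^{N+1},\cdot)$, then $x^{N-1},\dots,x^1\,|\,x^N\sim \mathfrak{L}^N_{N-1}(x^N,\cdot)\cdots$ — yields exactly \eqref{EvolvedGibbsTypeA} for $N+1$. The per-level marginal statement for each $k\le N$ then follows from the inductive hypothesis applied to the autonomous $\mathsf{GT}_N$-subprocess, and for $k=N+1$ from Theorem \ref{MarkovFunctionTheorem}.

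The main obstacle — and the point requiring genuine care rather than bookkeeping — is justifying that the two conditional laws can be \emph{consistently pasted}: a priori, conditioning the lower levels on $\mathsf{X}^{(N)}(T)$ alone (rather than on its full path) might not be enough, and one must check that the Gibbs/Markov-function machinery of Rogers--Pitman applied at level $(N,N+1)$ is compatible with the same machinery applied to levels $(1,\dots,N)$. The resolution is the Markovian consistency of the whole construction: because at every fixed time the joint law is Gibbs (a product of $\mathfrak{L}$-kernels), the conditional law of the bottom $N-1$ levels given $(\mathsf{X}^{(N)}(T),\mathsf{X}^{(N+1)}(T))$ depends only on $\mathsf{X}^{(N)}(T)$, and Theorem \ref{MarkovFunctionTheorem}'s conditional-law conclusion \eqref{ConditionalLaw} is precisely what guarantees the $(N+1,N)$-piece is $\mathfrak{L}^{N+1}_N$ and independent, given $\mathsf{X}^{(N)}(T)$, of how the lower levels were generated. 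One should state this conditional-independence-at-fixed-time carefully (it is really a statement that the Gibbs property is preserved by the dynamics, proven level-by-level), after which the recursion closes and \eqref{EvolvedGibbsTypeA} drops out.
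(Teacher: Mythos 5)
Your argument follows essentially the same route as the paper's: induction on $N$, the two-level Markov-function result (Theorem \ref{MarkovFunctionTheorem}), and the autonomy of the bottom sub-pattern. One logical reordering is needed, however. To apply Theorem \ref{MarkovFunctionTheorem} to levels $(N,N+1)$ you must \emph{first} know that $\left(\mathsf{X}^N(t);t\ge0\right)$ evolves with the Doob-transformed semigroup $\left(\mathfrak{P}^N_t;t\ge0\right)$, so that the pair $\left(\mathsf{X}^N,\mathsf{X}^{N+1}\right)$ carries the semigroup $\mathfrak{U}_t^{N,N+1}$. This is supplied by the induction hypothesis applied to the autonomous bottom-$N$-level process, not by the parenthetical appeal to the Weyl chamber: $\mathfrak{U}_t^{N,N+1}$ is the Doob $h$-transform of $\mathsf{U}_t^{N,N+1}$ by $\mathfrak{h}_N$, which genuinely changes the jump rates, and the fact that the actual push-block dynamics at level $N$ reproduce the conditioned/transformed semigroup is precisely what the induction hypothesis asserts and what makes the two-level theorem applicable. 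The paper's proof does this in the right order — it first invokes the induction hypothesis to get that $\mathsf{X}^{N-1}$ is Markov with semigroup $\mathfrak{P}^{N-1}_t$, and only then applies the two-level theorem. Your discussion of the ``pasting'' point flags a genuine subtlety that the paper passes over quickly; as worded it is slightly circular (it presupposes that the evolved law is of Gibbs form), but the intended reading — establish the Gibbs form level by level using the conditional-law conclusion \eqref{ConditionalLaw} at each stage — is exactly what the recursion achieves.
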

\begin{proof}
The proof is by induction. For $N=2$, this is Theorem \ref{MarkovFunctionTheorem}. Assume the result is true for $N-1$ and we prove it for $N$. We first observe that the induced measure on $\mathsf{GT}_{N-1}$
\begin{align*}
\left(\pi_{N-1}^N\right)_*\mathsf{M}_N\left(x^1,\dots,x^{N-1}\right)=\left[\mathfrak{M}^N\mathfrak{L}^{N}_{N-1}\right]\left(x^{N-1}\right)\mathfrak{L}^{N-1}_{N-2}\left(x^{N-1},x^{N-2}\right)\cdots \mathfrak{L}^{2}_{1}\left(x^2,x^{1}\right)
\end{align*}
is again Gibbs. Then, from the induction hypothesis $\left(\mathsf{X}^{N-1}(t);t \ge 0\right)$ is a Markov process with semigroup $\left(\mathfrak{P}^{N-1}_t;t \ge 0\right)$. Moreover, the joint dynamics of $\left(\mathsf{X}^{N-1}(t),\mathsf{X}^N(t);t \ge 0\right)$ are those considered in Theorem \ref{MarkovFunctionTheorem} (with semigroup $\mathfrak{U}_t^{N-1,N}$) and thus by the aforementioned result, we obtain that $\left(\mathsf{X}^N(t);t \ge 0\right)$ is distributed as a Markov process with semigroup $\left(\mathfrak{P}^{N}_t;t \ge 0\right)$. Furthermore by the same theorem we have that, for fixed $T\ge0$, the conditional law of $\mathsf{X}^{N-1}(T)$ given $\mathsf{X}^{N}(T)$ is $\mathfrak{L}^{N}_{N-1}\left(\mathsf{X}^N(T),\cdot\right)$. Hence, since the distribution of $\mathsf{X}^N(T)$ has density $\left[\mathfrak{M}^N\mathfrak{P}^{N}_T\right](\cdot)$, we get by the induction hypothesis, that the fixed time $T\ge0$, distribution of $\left(\mathsf{X}^1(T),\dots, \mathsf{X}^{N}(T)\right)$ is given by (\ref{EvolvedGibbsTypeA}) as desired.
\end{proof}

We observe that the densely packed initial condition $\mathsf{M}_N^{\mathsf{dp}}$ is clearly Gibbs. We close this subsection with a couple of remarks on generalizations of this result.

\begin{rmk}
It is also possible, by a simple extension of the argument above, to consider the distribution of $\left(\mathsf{X}^1(T_1),\mathsf{X}^2(T_2),\dots, \mathsf{X}^{N}(T_N)\right)$ at distinct times $(T_1,\dots,T_N)$ satisfying $T_N\le T_{N-1}\le\cdots \le T_1$. This corresponds to space-like distributions in the language of growth models, see  \cite{BorodinFerrariPushASEP}, \cite{BorodinFerrari}, \cite{BorodinFerrariPrahoferSasamoto}.
\end{rmk}

\begin{rmk}\label{LevelInhomogeneousRemark3}
In the setting of the level inhomogeneous model described in Remark \ref{LevelInhomogeneousRemark1} (with the notations of Remark \ref{LevelInhomogeneousRemark2}) the statement of the corresponding proposition (and its proof) is completely analogous with $\mathfrak{P}_t^{k}$ replaced by $\mathsf{P}_t^{(k,\alpha_k),\mathsf{h}_k^{(\alpha_1,\dots,\alpha_k)}}$ and $\mathfrak{L}_{k-1}^k$ replaced by $\mathsf{\Lambda}_{k-1}^{(k,\alpha_k),\mathsf{h}_k^{(\alpha_1,\dots,\alpha_k)}}$.
\end{rmk}

\subsection{Inhomogeneous Gelfand-Tsetlin graph and Plancherel measure}
This subsection is independent to the rest of the paper and can be skipped. However, it provides some further insight into the constructions of the present work and how they fit into a wider framework. We begin with some notation. Let
\begin{align*}
\tilde{\mathbb{W}}^N=\{(x_1,\dots,x_N)\in \mathbb{Z}^N:x_1<\dots<x_N\}
\end{align*}
denote the discrete chamber without the non-negativity restriction. The definitions of interlacing in this setting and of $\tilde{\mathbb{W}}^{N,N+1}$ are also completely analogous (we simply drop non-negativity).
\begin{defn}
We consider a graded graph $\mathsf{\Gamma}=\mathsf{\Gamma}_{\lambda}$ with vertex set $\uplus_{N\ge 1}\tilde{\mathbb{W}}^N$. Two vertices $x\in \tilde{\mathbb{W}}^{N+1}$ and $y\in \tilde{\mathbb{W}}^N$ are connected by an edge if and only if they interlace. For all $N\ge 1$ we assign a weight/multiplicity, denoted by $\mathsf{mult}_{\lambda}(y,x)$, to each edge $(y,x) \in \tilde{\mathbb{W}}^{N,N+1}$, and more generally to all pairs $(y,x)\in \tilde{\mathbb{W}}^N\times\tilde{\mathbb{W}}^{N+1}$:
\begin{align*}
\mathsf{mult}_{\lambda}(y,x)=\prod_{i=1}^{N}\frac{1}{\lambda(y_i)}\mathbf{1}(y\prec x).
\end{align*}
\end{defn}
The distinguished case $\mathsf{\Gamma}_1$ with $\lambda(\cdot)\equiv 1$ is the Gelfand-Tsetlin graph\footnote{The Gelfand-Tsetlin graph vertex set is commonly defined in terms of signatures $\mathsf{Sign}_N=\{\nu=(\nu_1,\dots,\nu_N) \in \mathbb{Z}^N:\nu_1\ge \nu_2\ge \dots \ge \nu_N \}$ for which there is a corresponding notion of interlacing which then gives the edge set. The two definitions are equivalent since there is a natural bijection between $\mathsf{Sign}_N$ and $\mathbb{W}^N$ under which interlacing in terms of signatures becomes interlacing in terms of elements of the $\mathbb{W}^N$'s.}, see \cite{VershikKerov}, \cite{BorodinOlshanskiBoundary}. This describes the branching of irreducible representations of the chain of unitary groups, see \cite{VershikKerov}, \cite{BorodinOlshanskiBoundary}. We propose to call the more general case $\mathsf{\Gamma}_{\lambda}$ defined above the inhomogeneous Gelfand-Tsetlin graph (in fact it is a family of graphs, one for each function $\lambda$).

We now define the dimension $\mathsf{dim}^{\lambda}_N(x)$ of a vertex $x \in \tilde{\mathbb{W}}^N$, inductively by:
\begin{align*}
\mathsf{dim}^{\lambda}_{k+1}(x)=\sum_{y\prec x}\mathsf{dim}^{\lambda}_{k}(y)\mathsf{mult}_{\lambda}(y,x), \ x\in \tilde{\mathbb{W}}^{k+1}, y\in \tilde{\mathbb{W}}^k\ \textnormal{ with } \mathsf{dim}^{\lambda}_1(z)\equiv 1, z \in \mathbb{Z}.
\end{align*}
We can associate a family of Markov kernels $\{\Lambda_{N+1\to N}\}_{N\ge 1}$ from $\tilde{\mathbb{W}}^{N+1}$ to $\tilde{\mathbb{W}}^N$ to the graph $\mathsf{\Gamma}_{\lambda}$ given by:
\begin{align*}
\Lambda_{N+1\to N}(x,y)=\frac{\mathsf{dim}^{\lambda}_{N}(y)\mathsf{mult}_{\lambda}(y,x)}{\mathsf{dim}^{\lambda}_{N+1}(x)},\  x\in \tilde{\mathbb{W}}^{N+1}, y\in \tilde{\mathbb{W}}^N.
\end{align*}
Observe that, by the very definitions, when restricting to the positive chambers $\mathbb{W}^N$ (namely considering the subgraph $\mathsf{\Gamma}_{\lambda}^+=\uplus_{N\ge 1} \mathbb{W}^N$) we have:
\begin{align}\label{EqualityofKernels}
\mathsf{dim}^{\lambda}_{k+1}(x)&=\mathfrak{h}_N(x;\lambda), \ x \in \mathbb{W}^N,\nonumber \\ \Lambda_{N+1\to N}(x,y)&=\mathfrak{L}_N^{N+1}(x,y), \ x\in \mathbb{W}^{N+1},y \in \mathbb{W}^N.
\end{align}

We say that a sequence of probability $\{ \mu_N\}_{N\ge 1}$ on $\{\tilde{\mathbb{W}}^N \}_{N\ge 1}$ is consistent if:
\begin{align*}
\mu_{N+1}\Lambda_{N+1\to N}=\mu_N, \ \forall N\ge 1.
\end{align*}
The extremal points of the convex set of consistent probability measures form the boundary of the graph $\mathsf{\Gamma}_{\lambda}$. In the homogeneous case $\lambda(\cdot)\equiv 1$, the boundary of the Gelfand-Tsetlin graph $\mathsf{\Gamma}_1$ has been determined explicitly and is in bijection (see \cite{VershikKerov}, \cite{BorodinOlshanskiBoundary}, \cite{PetrovBoundary} for more details and precise statements) with the infinite dimensional space $\Omega$:
\begin{align*}
&\Omega=(\alpha^+,\alpha^-,\beta^+,\beta^-,\delta^+,\delta^-)\in \mathbb{R}^{4\infty+2},\\
&\alpha^{\pm}=\left(\alpha_1^{\pm}\ge \alpha_2^{\pm}\ge \dots \ge 0\right)\in \mathbb{R}^{\infty}, \ \beta^{\pm}=\left(\beta_1^{\pm}\ge\beta_2^{\pm}\ge\dots\ge 0\right)\in\mathbb{R}^{\infty}, \ \delta^{\pm} \in \mathbb{R},\\
&\sum_i^{\infty}\left(\alpha_i^{\pm}+\beta_i^{\pm}\right)\le \delta^{\pm}, \ \beta_1^++\beta_1^-\le 1,
\end{align*}
and we also write:
\begin{align*}
\gamma^{\pm}=\delta^{\pm}-\sum_{i=1}^{\infty}\left(\alpha_i^{\pm}+\beta_i^{\pm}\right)\ge 0.
\end{align*}
The extremal consistent sequence of probability measures $\big\{\mathcal{M}_{\gamma^+}^N \big\}_{N\ge 1}$ corresponding to $\gamma^+\ge0$ with all the other parameters on $\Omega$ identically equal to zero is called the Plancherel measure\footnote{More generally the measure where both parameters $(\gamma^+,\gamma^-)$ can be positive is also called Plancherel.} for the infinite dimensional unitary group, see \cite{BorodinKuanPlancherel}. The connection to the present paper is through the following, see \cite{BorodinFerrari}
\begin{align*}
\mathcal{M}_{\gamma^+}^N(\cdot)= \mathfrak{P}_{\gamma^+}^N\left((0,1,\dots,N-1),\cdot\right), \forall N\ge 1,
\end{align*}
where the right hand side is defined for $\lambda(\cdot)\equiv 1$. Now, due to observation (\ref{EqualityofKernels}) and the fact that $\mathfrak{P}_{\gamma^+}^N\left((0,1,\dots,N-1),\cdot\right)$ is supported on $\mathbb{W}^N$ the following is an immediate consequence of the intertwining relation (\ref{KMIntertwiningTransformed}) from Proposition \ref{DoobTransformedIntertwinings}:

\begin{prop}
Let the function $\lambda$ be fixed satisfying ($\mathsf{UB}$). Consider the graph $\mathsf{\Gamma}_{\lambda}$ and for all $N\ge 1$ the semigroups $\left(\mathfrak{P}_t^{N};t\ge 0\right)$ associated to the function $\lambda$. Then, for each $\gamma^+\ge 0$ the sequence of probability measures $\big\{\mathfrak{P}_{\gamma^+}^N\left((0,1,\dots,N-1),\cdot\right)\big\}_{N\ge 1}$ is consistent for $\mathsf{\Gamma}_{\lambda}$.
\end{prop}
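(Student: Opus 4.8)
The plan is to verify the consistency relation $\mu_{N+1}\Lambda_{N+1\to N}=\mu_N$ directly, for $\mu_N:=\mathfrak{P}_{\gamma^+}^N\left((0,1,\dots,N-1),\cdot\right)$, by specializing the intertwining \eqref{KMIntertwiningTransformed} of Proposition \ref{DoobTransformedIntertwinings} to the time $t=\gamma^+$ and evaluating it on the densely packed row. Write $v_N=(0,1,\dots,N-1)$ throughout.

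First I would record a support observation. Since the pure-birth chain with generator $\mathsf{L}$ never decreases and starts from the non-negative configuration $v_N$, and since the Doob-transformed Karlin--McGregor semigroup $\left(\mathfrak{P}_t^N;t\ge0\right)$ preserves $\mathbb{W}^N$, the measure $\mu_N$ is supported on $\mathbb{W}^N$. Moreover, for $x\in\mathbb{W}^{N+1}$ any $y$ with $y\prec x$ automatically satisfies $y_1\ge x_1\ge 0$, hence lies in $\mathbb{W}^N$. Combining these two facts with the identification \eqref{EqualityofKernels} of the graph's cotransition kernel with $\mathfrak{L}_N^{N+1}$ on the positive chambers, one gets $\mu_{N+1}\Lambda_{N+1\to N}=\mu_{N+1}\mathfrak{L}_N^{N+1}$.

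Next I would apply \eqref{KMIntertwiningTransformed} at time $\gamma^+$ and evaluate both sides at the row $x=v_{N+1}$:
\[
\mu_{N+1}\mathfrak{L}_N^{N+1}=\big[\mathfrak{P}_{\gamma^+}^{N+1}\mathfrak{L}_N^{N+1}\big](v_{N+1},\cdot)=\big[\mathfrak{L}_N^{N+1}\mathfrak{P}_{\gamma^+}^{N}\big](v_{N+1},\cdot).
\]
The interlacing inequalities $0\le y_1<1\le y_2<2\le\cdots$ force $y_i=i-1$ for every $i$, so $v_N$ is the unique element of $\mathbb{W}^N$ with $y\prec v_{N+1}$; as $\mathfrak{L}_N^{N+1}(v_{N+1},\cdot)$ is a probability measure supported on that single point it equals $\delta_{v_N}$. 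Hence the right-hand side is $\mathfrak{P}_{\gamma^+}^N(v_N,\cdot)=\mu_N$, which is precisely the desired consistency relation.

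I do not expect a genuine obstacle: the proposition is an immediate corollary of the intertwining already proved, and the only points needing (minor) care are the two bookkeeping steps above --- checking that $\mu_N$ lives on the positive chamber so the two kernel conventions in \eqref{EqualityofKernels} actually match, and noting that the densely packed top row has a unique interlacing predecessor, which collapses $\mathfrak{L}_N^{N+1}(v_{N+1},\cdot)$ to a point mass and thereby makes the right-hand side of the intertwining transparent.
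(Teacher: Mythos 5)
Your proposal is correct and takes essentially the same approach as the paper: the paper also invokes the support observation, the identification $\Lambda_{N+1\to N}=\mathfrak{L}_N^{N+1}$ on positive chambers from (\ref{EqualityofKernels}), and the intertwining (\ref{KMIntertwiningTransformed}), merely stating the conclusion as ``immediate'' where you have spelled out the evaluation at the densely packed row and the collapse of $\mathfrak{L}_N^{N+1}(v_{N+1},\cdot)$ to the point mass $\delta_{v_N}$.
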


Thus, the sequence $\big\{\mathfrak{P}_{\gamma^+}^N\left((0,1,\dots,N-1),\cdot\right)\big\}_{N\ge 1}$ can be viewed as the analogue of the Plancherel measure for the more general graphs $\mathsf{\Gamma}_{\lambda}$. It would be interesting to understand whether this sequence is actually extremal for $\mathsf{\Gamma}_{\lambda}$ for general $\lambda$. A more ambitious question would be whether there exists a complete classification of extremal consistent measures for $\Gamma_{\lambda}$, in analogy to the case of the Gelfand-Tsetlin graph $\mathsf{\Gamma}_1$.

\begin{rmk}
Analogous constructions exist for the level-inhomogeneous generalization of the Gelfand-Tsetlin graph c.f. Remarks \ref{LevelInhomogeneousRemark1}, \ref{LevelInhomogeneousRemark2}, \ref{LevelInhomogeneousRemark3}.
\end{rmk}

\section{Determinantal structure and computation of the kernel}

\subsection{Eynard-Mehta Theorem and determinantal correlations}

We will make use of one of the many variants of the famous Eynard-Mehta Theorem \cite{EynardMehta}, and in particular a generalization to measures on interlacing particle systems, see \cite{BorodinRains}, \cite{BorodinFerrariPrahoferSasamoto}. More precisely, we will use Lemma 3.4 of \cite{BorodinFerrariPrahoferSasamoto}. For the convenience of the reader and to set up some notation we reproduce it here:

\begin{prop}\label{EynardMehta}
Assume we have a (possibly signed) measure on $\{x_i^n, i=1,\dots, N, i=1,\dots,n\}$ given in the form:
\begin{align}
\frac{1}{Z_N}\prod_{n=1}^{N-1}\det\left[\phi_n(x_i^n,x_j^{n+1})\right]_{i,j=1}^{n+1}\det\left[\Psi_{N-i}^N\left(x_j^N\right)\right]_{i,j=1}^N,
\end{align}
where $x_{n+1}^{n}$ are some "virtual" variables, which we also denote by $\mathsf{virt}$, and $Z_N$ is a non-zero normalization constant. Then, the correlation functions are determinantal. To write down the kernel we need some notation. Define,
\begin{align*}
\phi^{(n_1,n_2)}(x,y)=\begin{cases}
\left(\phi_{n_1}* \cdots *\phi_{n_2}\right)(x,y), & n_1<n_2,\\
0, & n_1\ge n_2,
\end{cases}
\end{align*}
where $(a*b)(x,y)=\sum_{z \in \mathbb{Z}}^{}a(x,z)b(z,y)$. Also, define for $1\le n <N$:
\begin{align*}
\Psi_{n-j}^n(y)=\left(\phi^{(n,N)}*\Psi_{N-j}^N\right)(y), \ j=1,2,\dots, N.
\end{align*}
Set $\phi(x_1^0,x)=1$. Then, the functions
\begin{align*}
\big\{ (\phi_0*\phi^{(1,n)})(x_1^0,x),\dots,(\phi_{n-2}*\phi^{(n-1,n)})(x_{n-1}^{n-2},x),\phi_{n-1}(x_{n}^{n-1},x)\big\}
\end{align*}
are linearly independent and generate the $n$-dimensional space $V_n$. For each $1\le n \le N$ we define a set of functions $\{\Phi_j^n(x), j=0,\dots,n-1 \}$ determined by the following two properties:
\begin{itemize}
\item The functions $\{\Phi_j^n(x), j=0,\dots,n-1 \}$ span $V_n$.
\item For $1\le i,j \le n-1$ we have:
\begin{align*}
\sum_{x}^{}\Psi_i^n(x)\Phi_j^n(x)=\mathbf{1}(i=j).
\end{align*}
\end{itemize}
Finally, assume that $\phi_n(x_{n+1}^n,x)=c_n\Phi_0^{(n+1)}(x)$ for some $c_n\neq 0$, $n=1,\dots,N-1$. Then, the kernel takes the simple form:
\begin{align}
K(n_1,x_1;n_2,x_2)=-\phi^{(n_1,n_2)}(x_1,x_2)+\sum_{k=1}^{n_2}\Psi^{n_1}_{n_1-k}(x_1)\Phi_{n_2-k}^{n_2}(x_2).
\end{align}
\end{prop}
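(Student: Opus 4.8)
Proposition \ref{EynardMehta} is Lemma 3.4 of \cite{BorodinFerrariPrahoferSasamoto}, a multilevel incarnation of the Eynard--Mehta theorem \cite{EynardMehta} (see also \cite{BorodinRains}); the plan is to reproduce its proof, which proceeds in two stages. The first stage shows, by pure linear algebra, that any (signed) measure of the stated product-of-determinants shape has determinantal correlation functions, with a kernel written through the inverse of an auxiliary matrix. The workhorse is the Cauchy--Binet identity: summing a product of two determinants over a shared index set returns a single determinant. Applying this level by level --- to compute the normalisation $Z_N$, and to compute the $k$-point functions with $k$ particle positions frozen --- collapses the sums into determinants whose entries are the convolutions $\phi^{(n_1,n_2)}$ together with the pairings $\langle \Psi^n_{n-i},\rho^n_l\rangle=\sum_x\Psi^n_{n-i}(x)\rho^n_l(x)$, where $\rho^n_l$ is the $l$-th of the natural spanning functions of $V_n$ written out in the statement. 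One reads off in this way that $Z_N=\pm\det\mathsf{M}_N$ for an explicit matrix $\mathsf{M}_N$ of such pairings, that $\mathsf{M}_N$ is invertible exactly because $Z_N\neq 0$, and that the kernel has the (still implicit) shape
\begin{align*}
K(n_1,x_1;n_2,x_2)=-\phi^{(n_1,n_2)}(x_1,x_2)+\sum_{i}\sum_{l=1}^{n_2}\Psi^{n_1}_{n_1-i}(x_1)\big[\mathsf{M}_N^{-1}\big]_{il}\rho^{n_2}_l(x_2),
\end{align*}
where one uses $\phi^{(n,N)}*\Psi^N_{N-i}=\Psi^n_{n-i}$ to pull the level-$N$ data back to level $n_1$.

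The second stage recognises the inner combination $\sum_l[\mathsf{M}_N^{-1}]_{il}\rho^{n_2}_l$ as precisely the dual function $\Phi^{n_2}_{n_2-i}$, and this is exactly where the hypothesis $\phi_n(x^n_{n+1},x)=c_n\Phi^{(n+1)}_0(x)$ on the virtual variables is needed. Introducing a virtual particle at each level is the device that equalises the cardinalities of consecutive levels so that Cauchy--Binet can be run uniformly; the displayed compatibility condition then forces the ``virtual direction'' to remain aligned with the dual basis as one propagates upward, so that $\mathsf{M}_N$, on the block the kernel actually sees, becomes triangular --- indeed, once the constants $c_n$ are absorbed, the identity. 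Concretely one verifies by induction on $n$ that the functions $\Phi^n_j$ ($j=0,\dots,n-1$) fixed by the two stated properties --- with $\Phi^n_0$ pinned by the hypothesis itself as $\Phi^n_0=c_{n-1}^{-1}\phi_{n-1}(\mathsf{virt},\cdot)\in V_n$, and $\Phi^n_j$ for $j\ge 1$ determined by biorthogonality to $\Psi^n_1,\dots,\Psi^n_{n-1}$ inside the $n$-dimensional space $V_n$ --- satisfy $\Phi^n_{n-i}=\sum_l[\mathsf{M}_N^{-1}]_{il}\rho^n_l$. Inserting this into the displayed kernel collapses the double sum to $\sum_{k=1}^{n_2}\Psi^{n_1}_{n_1-k}(x_1)\Phi^{n_2}_{n_2-k}(x_2)$, which is the claim.

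The main obstacle is the virtual-variable bookkeeping: one must keep track, level by level, of which row of each $(n{+}1)\times(n{+}1)$ block is virtual, and check that the hypothesis makes those contributions telescope so that $\mathsf{M}_N$ really does reduce to the identity on the relevant block --- this is the precise point at which a ``generic'' product-of-determinants measure would leave an irreducible matrix inverse in the kernel. One must also confirm that the $\Phi^n_j$ exist and are unique, which rests on the linear independence of $\Psi^n_1,\dots,\Psi^n_{n-1}$ as functionals on $V_n$ (a consequence of $Z_N\neq 0$), and handle the boundary case $n_1\ge n_2$, where $\phi^{(n_1,n_2)}\equiv 0$ and the kernel is carried entirely by the biorthogonal sum, together with the formally-appearing degenerate terms $\Psi^n_{n-i}$ with $i>n$.
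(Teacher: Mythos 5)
The paper does not prove this proposition; it is quoted verbatim from Lemma 3.4 of \cite{BorodinFerrariPrahoferSasamoto}, cited ``for the convenience of the reader and to set up some notation,'' so there is no in-paper proof to compare against. Your reconstruction is a sensible high-level outline of the Eynard--Mehta/Borodin--Rains strategy that underlies the cited lemma, and you correctly locate the role of the hypothesis $\phi_n(\mathsf{virt},\cdot)=c_n\Phi_0^{(n+1)}(\cdot)$ as the device that eliminates the inverse Gram matrix from the kernel. Two points are worth flagging. First, your central claim --- that after absorbing the constants $c_n$ the Gram matrix $\mathsf{M}_N$ reduces to the identity on the relevant block --- is asserted, not demonstrated; this is exactly where the nontrivial bookkeeping lives, and the published proof defers it to the abstract determinantal-process formalism of \cite{BorodinRains} rather than running Cauchy--Binet level by level from scratch as you propose, so at the level of proof structure your route is a genuine (and more hands-on) variant rather than a reproduction. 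Second, your remark about existence and uniqueness of the $\Phi^n_j$ is apt: the biorthogonality condition as stated ranges only over $1\le i,j\le n-1$, which together with spanning $V_n$ leaves residual freedom; this is resolved precisely by the normalisation hypothesis on the virtual column, and your sketch should make that explicit rather than folding it into the word ``determined.''
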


From Proposition \ref{ConsistentMultilevelProp} we get that $\mathsf{Law}\left[\mathsf{X}_N(t;\mathsf{M}_{N}^{\mathsf{dp}})\right]$ for fixed time $t\ge 0$ is given by, where we use the notation $\triangle_N=(0,1,\dots,N-1)$:
\begin{align*}
\frac{\mathfrak{h}_N(x^N)}{\mathfrak{h}_N(\triangle_N)}\det\left(e^{t \mathsf{L}}(i-1,x^N_j)\right)_{i,j=1}^N \frac{\mathfrak{h}_{N-1}\left(x^{N-1}\right)}{\mathfrak{h}_{N}\left(x^{N}\right)}\prod_{j=1}^{N-1}\frac{1}{\lambda\left(x_j^{N-1}\right)}\mathbf{1}\left(x^{N-1}\prec x^N\right) \cdots \frac{\mathfrak{h}_1(x^1)}{\mathfrak{h}_2(x^2)}\frac{1}{\lambda(x^1)}\mathbf{1}\left(x^1\prec x^2\right).
\end{align*}
Using the spectral expansion (\ref{SpectralExpansion}) of $e^{t\mathsf{L}}(x,y)$ and row operations (recall that $p_x(w)$ is a polynomial of degree $x$ in $w$) we can rewrite the display above as follows, for a (different) non-zero constant $\mathsf{Z}_N$:
\begin{align*}
\frac{1}{\mathsf{Z}_N}\det\left(\Psi_{N-i}^N(x_j^N)\right)_{i,j=1}^N \prod_{j=1}^{N-1}\frac{1}{\lambda\left(x_j^{N-1}\right)}\mathbf{1}\left(x^{N-1}\prec x^N\right) \cdots \prod_{j=1}^{2}\frac{1}{\lambda\left(x_j^{2}\right)}\mathbf{1}\left(x^{2}\prec x^3\right)\frac{1}{\lambda(x^1)}\mathbf{1}\left(x^1\prec x^2\right),
\end{align*}
where the functions $\{ \Psi_{N-i}^N(\cdot)\}_{i=1}^N$ (we suppress dependence on the time variable $t$ since it is fixed) are given by:
\begin{align}
\Psi^N_{N-i}(x)=-\frac{1}{\lambda(x)}\frac{1}{2\pi \i} \oint_{\mathsf{C}_{\lambda}}\psi_x(w)w^{N-i}e^{-tw}dw.
\end{align}
Moreover, we note that it is possible (see for example \cite{Warren}) to write the indicator function for interlacing as a determinant, for $y \in \mathbb{W}^{n-1}, x \in \mathbb{W}^{n}$:
\begin{align*}
\mathbf{1}(y \prec x)=\det \left(\mathsf{f}_{ij}\right)_{i,j=1}^n, \ \textnormal{where} \  \mathsf{f}_{ij}=\begin{cases}
-\mathbf{1}\left(x_i>y_j\right), \ & j\le n-1,\\
1, \ & j=n.
\end{cases}
\end{align*}
Thus, we can write the measure above in a form that is within the scope of Proposition \ref{EynardMehta}:
\begin{align*}
\frac{1}{\mathsf{Z}_N}\det\left(\Psi_{N-i}^N(x_j^N)\right)_{i,j=1}^N \det\left[\phi_{N-1}\left(x_i^{N-1},x_j^N\right)\right]_{i,j=1}^N \cdots\det\left[\phi_{2}\left(x_i^{2},x_j^3\right)\right]_{i,j=1}^3 \det\left[\phi_{1}\left(x_i^{1},x_j^2\right)\right]_{i,j=1}^2
\end{align*}
with,
\begin{align}
\phi_k(y,x)\equiv\phi(y,x)= \begin{cases}
-\frac{1}{\lambda(y)}\mathbf{1}(x>y), &y \in \mathbb{Z}_+,\\
1, & y= \mathsf{virt},
\end{cases} \ \ 1\le k \le N-1.
\end{align}
In particular this implies determinantal correlations. The explicit computation of the kernel is performed in the next section.

\begin{rmk}
A completely analogous computation as the one above gives that starting from any deterministic initial condition $\mathfrak{M}^N(\cdot)=\delta_{(z_1,\dots,z_N)}(\cdot)$ for the top level the evolved Gibbs measure on $\mathsf{GT}_N$ has determinantal correlations. A possible choice (this is clearly not unique, as we can use linear combinations) of the functions $\Psi$ is as follows (the functions $\phi$ are as before):
\begin{align*}
\Psi_{N-i}^{N}(x)=e^{t\mathsf{L}}\left(z_i,x\right), \ i=1,\dots,N.
\end{align*}
The explicit computation of the correlation kernel is an interesting open problem.
\end{rmk}

\begin{rmk}\label{LevelInhomogeneousRemark4}
In the level inhomogeneous setting, analogous computations, making use of Remark \ref{LevelInhomogeneousRemark3} show the existence of determinantal correlations for Gibbs measures with deterministic initial conditions for the top level $\mathfrak{M}^N(\cdot)=\delta_{(z_1,\dots,z_N)}(\cdot)$. A possible choice of the functions $\Psi$ and $\phi_k$ is as follows:
\begin{align*}
\Psi_{N-i}^{N}(x)&=e^{t\left(\lambda(\cdot)\nabla^+_\cdot+\alpha_N\right)}\left(z_i,x\right),\ 1\le i \le N, \\
\phi_k(y,x)&=-\frac{1}{\lambda(y)+\alpha_{k+1}}\mathsf{h}_{\alpha_k}^{\alpha_{k+1}}(y)\mathbf{1}(y<x), \ 1 \le k \le N-1.
\end{align*}
\end{rmk}

\subsection{Computation of the correlation kernel}
Our aim now is to solve the biorthogonalization problem given in Proposition \ref{EynardMehta} and obtain concise contour integral expressions for the families of functions appearing therein. This is achieved in the following sequence of lemmas. Firstly in order to ease notation, since we are in the level homogeneous case, we define:
\begin{align*}
\phi^{(n_2-n_1)}(z_1,z_2)=\phi^{(n_1,n_2)}(z_1,z_2).
\end{align*}

\begin{lem}\label{AuxLemma1} For $1\le n \le N$, we have:
\begin{align}
\Psi^n_{n-j}(x)=-\frac{1}{\lambda(x)}\frac{1}{2\pi \i} \oint_{\mathsf{C}_{\lambda}}\psi_x(w)w^{n-j}e^{-tw}dw, \ j=1,2,\dots, N.
\end{align}
\end{lem}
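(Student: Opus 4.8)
The plan is to prove the claimed formula by downward induction on $n$, starting from $n = N$ where the identity holds by definition of $\Psi_{N-i}^N$ (recall from the displayed formula preceding the lemma that $\Psi_{N-i}^N(x) = -\frac{1}{\lambda(x)}\frac{1}{2\pi\i}\oint_{\mathsf{C}_\lambda}\psi_x(w)w^{N-i}e^{-tw}\,dw$). For the inductive step we use the defining recursion from Proposition \ref{EynardMehta}, namely $\Psi_{n-j}^n(y) = (\phi^{(n,N)} * \Psi_{N-j}^N)(y) = (\phi * \Psi_{n+1-j}^{n+1})(y)$ by the convolution/semigroup structure and the fact that all $\phi_k$ equal the same kernel $\phi$. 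So it suffices to show that applying $\phi$ to the right-hand side at level $n+1$ produces the right-hand side at level $n$: concretely, with $\phi(y,x) = -\frac{1}{\lambda(y)}\mathbf{1}(x > y)$ for $y \in \mathbb{Z}_+$, I would need
\begin{align*}
\sum_{x > y} \left(-\frac{1}{\lambda(y)}\right)\left(-\frac{1}{\lambda(x)}\right)\frac{1}{2\pi\i}\oint_{\mathsf{C}_\lambda}\psi_x(w)w^{\,n+1-j}e^{-tw}\,dw = -\frac{1}{\lambda(y)}\frac{1}{2\pi\i}\oint_{\mathsf{C}_\lambda}\psi_y(w)w^{\,n-j}e^{-tw}\,dw.
\end{align*}

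The core of the argument is therefore an identity about the functions $\psi_x$: summing $\frac{1}{\lambda(x)}\psi_x(w)$ over $x > y$ should collapse (as a geometric-type telescoping) to $\frac{1}{w}\psi_y(w)$, at least for $w$ on the contour $\mathsf{C}_\lambda$ where convergence and the residue bookkeeping work out. I would establish this by noting the pointwise recursion $\psi_{x}(w) = \frac{\lambda(x)}{\lambda(x)-w}\psi_{x-1}(w)$, which rearranges to $\frac{1}{\lambda(x)}\psi_x(w) = \frac{1}{w}\bigl(\psi_x(w) - \psi_{x-1}(w)\bigr)$; this is precisely a telescoping identity. Summing from $x = y+1$ to $x = \infty$ gives $\sum_{x>y}\frac{1}{\lambda(x)}\psi_x(w) = \frac{1}{w}\bigl(\lim_{x\to\infty}\psi_x(w) - \psi_y(w)\bigr)$, and one checks that $\psi_x(w) \to 0$ as $x \to \infty$ uniformly for $w$ on $\mathsf{C}_\lambda$, using \textsf{(UB)} (each factor $\frac{\lambda(k)}{\lambda(k)-w}$ has modulus bounded away from $1$ when $w$ is bounded away from the real segment $[s,M]$, so the infinite product decays geometrically). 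Hence the sum equals $-\frac{1}{w}\psi_y(w)$. Substituting back and exchanging the (absolutely convergent) sum with the contour integral then yields exactly the desired level-$n$ expression, with the extra $w^{-1}$ turning $w^{n+1-j}$ into $w^{n-j}$.

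The main obstacle I anticipate is the justification of interchanging the infinite sum over $x$ with the contour integral, together with the uniform decay of $\psi_x(w)$ on $\mathsf{C}_\lambda$; this is where \textsf{(UB)} and the specific geometry of the contour (bounded away from $\{\lambda(k)\}$ and from the real axis segment they lie in) are essential. Once that analytic point is secured, the remainder is the purely algebraic telescoping above, which is routine. An alternative, perhaps cleaner, route avoiding the infinite sum entirely is to verify the claimed formula by checking it solves the same defining relations as $\Psi_{n-j}^n$: one can directly confirm that the claimed right-hand side $g_{n-j}(x)$ satisfies $\sum_{x'}\phi(x, x') g_{n-j}(x') $ reproduces $g_{n+1-j}$ at the next level via the same telescoping, and that the $n=N$ case matches — this is really the same computation organized as a verification rather than a derivation. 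I would present the telescoping identity for $\psi_x$ as the key lemma and then the induction as a short formal consequence.
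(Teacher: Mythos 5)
Your overall strategy is the same as the paper's: reduce to a single convolution step $\phi * \Psi^{n+1}_{n+1-j} = \Psi^n_{n-j}$, interchange the sum over $x$ with the contour integral, and telescope $\sum_{x>y}\frac{\psi_x(w)}{\lambda(x)}$ down to $-\frac{\psi_y(w)}{w}$ using the recursion $\frac{1}{\lambda(x)}\psi_x(w) = \frac{1}{w}\bigl(\psi_x(w) - \psi_{x-1}(w)\bigr)$. This is exactly the paper's identity $\sum_{l=0}^{k}\frac{1}{\mathsf{a}_l}\prod_{i=0}^{l}\frac{\mathsf{a}_i}{\mathsf{a}_i-w}=-\frac{1}{w}\bigl(1-\prod_{l=0}^k\frac{\mathsf{a}_l}{\mathsf{a}_l-w}\bigr)$ with $\mathsf{a}_i=\lambda(y+i+1)$, just written without the relabelling.

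However, your justification of the sum/integral interchange has a genuine error. It is not true that $\bigl|\frac{\lambda(k)}{\lambda(k)-w}\bigr|$ is bounded away from $1$ whenever $w$ is bounded away from $[s,M]$: at $w = M+\varepsilon$ with $\lambda(k)=M$ the modulus is $M/\varepsilon \gg 1$, and near $w=0$ (which $\mathsf{C}_\lambda$ must approach, since it encircles the origin) the modulus is arbitrarily close to $1$. So $\psi_x(w)$ does \emph{not} decay geometrically, nor even stay bounded, uniformly on a contour $\mathsf{C}_\lambda$ drawn snugly around $[0,M]$ as in Figure \ref{FigureContours}, and the infinite sum cannot be interchanged with the integral there. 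The paper's proof handles this by first deforming $\mathsf{C}_\lambda$ to a much larger contour $\mathsf{C}_\lambda^{\ge R}$ (say a circle of radius $\gg M$) on which $\sup_{x}\bigl|\frac{\lambda(x)}{\lambda(x)-w}\bigr|\le 1/R<1$ uniformly by $(\mathsf{UB})$; the deformation is legal because the integrand is analytic away from a neighbourhood of $[0,M]$, and on the large contour the series telescopes with a geometrically vanishing remainder, justifying the interchange. Once you insert this explicit contour deformation in place of the incorrect claim about $\mathsf{C}_\lambda$ itself, your argument becomes the paper's.
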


\begin{lem}\label{AuxLemma2}
For $1\le k\le N$, we have:
\begin{align*}
\phi^{(k)}(y,x)=-\frac{1}{\lambda(y)} \frac{1}{2\pi \i} \oint_{\mathsf{C_\lambda}}\psi_y(w)\frac{p_x(w)}{w^k}dw, \ x,y \in \mathbb{Z}_+.
\end{align*}
\end{lem}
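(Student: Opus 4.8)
The goal is to compute the convolution $\phi^{(k)}(y,x) = (\phi * \cdots * \phi)(y,x)$ with $k$ factors, where $\phi(y,x) = -\frac{1}{\lambda(y)}\mathbf{1}(x>y)$ for $y \in \mathbb{Z}_+$. The plan is to proceed by induction on $k$. For the base case $k=1$, I would check directly that $-\frac{1}{\lambda(y)}\frac{1}{2\pi\i}\oint_{\mathsf{C}_\lambda}\psi_y(w)\frac{p_x(w)}{w}dw$ equals $-\frac{1}{\lambda(y)}\mathbf{1}(x>y)$. Since $\psi_y(w) = 1/p_{y+1}(w)$, the integrand is $\frac{p_x(w)}{w\, p_{y+1}(w)}$; when $x \le y$ the rational function $p_x(w)/p_{y+1}(w)$ is proper (degree of numerator strictly less than that of denominator) and the only pole inside $\mathsf{C}_\lambda$ not cancelled is $w=0$, where $p_x(0)/p_{y+1}(0) = 1$, giving residue $1$; when $x > y$ one must also account for... actually it is cleaner to observe the contour encircles $0$ and all $\{\lambda(k)\}$, so by residues at infinity / a direct partial-fractions argument one recovers exactly $\mathbf{1}(x>y)$. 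I would spell this out via the residue at $w=0$ together with the observation that $p_x(w)/p_{y+1}(w)$ has no pole at infinity when $x \le y$.

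For the inductive step, assuming the formula for $k$, I would compute
\begin{align*}
\phi^{(k+1)}(y,x) = \sum_{z \in \mathbb{Z}_+} \phi(y,z)\, \phi^{(k)}(z,x) = \sum_{z > y} \frac{1}{\lambda(y)} \cdot \frac{1}{\lambda(z)}\frac{1}{2\pi\i}\oint_{\mathsf{C}_\lambda}\psi_z(w)\frac{p_x(w)}{w^k}dw.
\end{align*}
The key computation is the sum over $z$: one needs $\sum_{z > y}\frac{1}{\lambda(z)}\psi_z(w)$. Here I would use the telescoping identity that follows from the definition of $\psi_z$ and $p_z$: since $\psi_z(w) = \prod_{k=0}^{z}\frac{\lambda(k)}{\lambda(k)-w}$, one checks that $\frac{1}{\lambda(z)}\psi_z(w) = \frac{1}{w}\left(\psi_z(w) - \psi_{z-1}(w)\right)$ — indeed $\psi_z - \psi_{z-1} = \psi_{z-1}\left(\frac{\lambda(z)}{\lambda(z)-w} - 1\right) = \psi_{z-1}\frac{w}{\lambda(z)-w} = \frac{w}{\lambda(z)}\psi_z$. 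Summing the telescoping series from $z = y+1$ to $\infty$ and using that $\psi_z(w) \to 0$ as $z \to \infty$ for $w$ on $\mathsf{C}_\lambda$ (this uses Assumption $(\mathsf{UB})$: the factors $\lambda(k)/(\lambda(k)-w)$ are eventually bounded by a constant less than $1$ in modulus once the contour is chosen appropriately, so the product decays geometrically) gives $\sum_{z>y}\frac{1}{\lambda(z)}\psi_z(w) = -\frac{1}{w}\psi_y(w)$. Substituting back yields $\phi^{(k+1)}(y,x) = -\frac{1}{\lambda(y)}\frac{1}{2\pi\i}\oint_{\mathsf{C}_\lambda}\psi_y(w)\frac{p_x(w)}{w^{k+1}}dw$, completing the induction.

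The main obstacle is justifying the interchange of summation and contour integration together with the convergence of the infinite sum $\sum_{z>y}\frac{1}{\lambda(z)}\psi_z(w)$ uniformly on the contour. This requires choosing $\mathsf{C}_\lambda$ carefully relative to the strip $[s,M]$ where the $\lambda(k)$ live, and then obtaining a geometric bound $|\psi_z(w)| \le C \rho^z$ with $\rho < 1$ uniformly for $w \in \mathsf{C}_\lambda$; this is exactly where $(\mathsf{UB})$ enters and is the only analytically nontrivial point. Everything else is the telescoping identity, which is an elementary manipulation of the defining products, plus the base case residue computation. One should also note that the sum over $z \in \mathbb{Z}_+$ in the Eynard–Mehta convolution is effectively over $z > y$ because of the indicator in $\phi$, and that $p_x(w)$ does not depend on $z$ so it pulls out of the sum, which is what makes the argument clean.
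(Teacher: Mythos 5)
Your proposal matches the paper's proof: induction on $k$, with the base case handled by a residue computation and the inductive step reduced to the geometric-type sum $\sum_{z>y}\psi_z(w)/\lambda(z) = -\psi_y(w)/w$, which the paper also establishes (as display (\ref{KeyEquality}), via essentially the same telescoping identity) on a large contour $\mathsf{C}_\lambda^{\ge R}$ chosen so that $|\lambda(x)/(\lambda(x)-w)|\le 1/R<1$ uniformly, exactly the role you assign to $(\mathsf{UB})$. One small caveat on your base-case write-up: your first stab says that for $x\le y$ the only uncancelled pole inside $\mathsf{C}_\lambda$ is $w=0$, which is not true (the points $\lambda(x),\dots,\lambda(y)$ remain as poles); the correct argument is the one you then switch to — for $x\le y$ the integrand is $O(w^{-2})$ at infinity so all residues inside $\mathsf{C}_\lambda$ sum to zero, while for $x>y$ the product $\psi_y p_x$ is a polynomial and the only pole is the simple one at $w=0$ with residue $1$.
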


\begin{lem}\label{AuxLemma3}
For $1\le k \le N$, we have:
\begin{align}
\phi^{(k)}(\mathsf{virt},x)=\frac{1}{2\pi \i} \oint_{\mathsf{C}_0}\frac{p_x(w)}{w^k}dw, \ x\in \mathbb{Z}_+.
\end{align}
\end{lem}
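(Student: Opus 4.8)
The plan is to compute $\phi^{(k)}(\mathsf{virt},x)$ directly from its definition as the $(k-1)$-fold convolution $\phi^{(k)}(\mathsf{virt},x) = (\phi_{n}\ast\phi^{(n+1,n+k)})(\mathsf{virt},x)$ with $\phi_n(\mathsf{virt},y)=1$, i.e.\ $\phi^{(k)}(\mathsf{virt},x) = \sum_{y\in\mathbb{Z}_+}\phi^{(k-1)}(y,x)$ where the inner sum is over the convolution of $k-1$ copies of the kernel $\phi(y,x) = -\frac{1}{\lambda(y)}\mathbf{1}(x>y)$. The cleanest route, however, is to piggyback on Lemma \ref{AuxLemma2}: there we already have the contour-integral form $\phi^{(k)}(y,x) = -\frac{1}{\lambda(y)}\frac{1}{2\pi\i}\oint_{\mathsf{C}_\lambda}\psi_y(w)\frac{p_x(w)}{w^k}\,dw$, so it suffices to show $\sum_{y\ge 0}\phi^{(k)}(y,x)$ — or more precisely the expression obtained by prepending one more $\phi$ factor of the virtual type — collapses to $\frac{1}{2\pi\i}\oint_{\mathsf{C}_0}\frac{p_x(w)}{w^k}\,dw$. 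Concretely I would write $\phi^{(k)}(\mathsf{virt},x) = \sum_{y\ge 0}\phi(\mathsf{virt},y)\,\phi^{(k-1)}(y,x) = \sum_{y\ge 0}\phi^{(k-1)}(y,x)$ and insert the Lemma \ref{AuxLemma2} integral representation for $\phi^{(k-1)}$ (valid for $k\ge 2$; the case $k=1$ is just $\phi^{(1)}(\mathsf{virt},x)=\phi(\mathsf{virt},x)=1$, matching $\frac{1}{2\pi\i}\oint_{\mathsf{C}_0}\frac{p_x(w)}{w}\,dw = p_x(0) = 1$).

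The key computational step is then the summation $\sum_{y\ge 0}\frac{1}{\lambda(y)}\psi_y(w)$. By definition $\frac{1}{\lambda(y)}\psi_y(w) = \frac{1}{\lambda(y)}\prod_{k=0}^{y}\frac{\lambda(k)}{\lambda(k)-w}$, and a telescoping identity shows $\frac{1}{\lambda(y)}\psi_y(w) = \frac{1}{w}\left(\psi_{y-1}(w) - \psi_y(w)\right)$ for $y\ge 1$ (with the $y=0$ term handled by $\frac{1}{\lambda(0)}\psi_0(w) = \frac{1}{\lambda(0)-w} = \frac{1}{w}(1-\psi_0(w))$ using $\psi_{-1}\equiv 1$). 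Hence $\sum_{y=0}^{Y}\frac{1}{\lambda(y)}\psi_y(w) = \frac{1}{w}\left(1-\psi_Y(w)\right)$, and since $w$ lies outside $\mathsf{C}_\lambda$... wait — rather, for the interchange of sum and integral to be legitimate and for the tail $\psi_Y(w)$ to vanish, I would deform or choose $w$ so that $|\lambda(k)-w| > |\lambda(k)|$ uniformly, which holds on a contour $\mathsf{C}_\lambda$ of large enough radius by Assumption (UB); then $\psi_Y(w)\to 0$ geometrically as $Y\to\infty$. This yields $\sum_{y\ge 0}\phi^{(k-1)}(y,x) = -\frac{1}{2\pi\i}\oint_{\mathsf{C}_\lambda}\frac{1}{w}\cdot\frac{p_x(w)}{w^{k-1}}\,dw = -\frac{1}{2\pi\i}\oint_{\mathsf{C}_\lambda}\frac{p_x(w)}{w^{k}}\,dw$.

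Finally I would convert the contour: on $\mathsf{C}_\lambda$ the integrand $\frac{p_x(w)}{w^k}$ has poles only at $w=0$ (since $p_x$ is a polynomial), so by the residue theorem the integral over $\mathsf{C}_\lambda$ equals the integral over the small contour $\mathsf{C}_0$ around $0$; the orientation/sign works out to give $\phi^{(k)}(\mathsf{virt},x) = \frac{1}{2\pi\i}\oint_{\mathsf{C}_0}\frac{p_x(w)}{w^k}\,dw$, as claimed. The main obstacle is the justification of interchanging the infinite sum over $y$ with the contour integral and the uniform geometric decay of $\psi_Y(w)$ on the contour; this is exactly where Assumption (UB) is used, by taking $\mathsf{C}_\lambda$ with radius exceeding, say, $2M$, so that $\sup_{w\in\mathsf{C}_\lambda}\sup_{k}\left|\frac{\lambda(k)}{\lambda(k)-w}\right| < 1$. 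Everything else is bookkeeping with the telescoping identity and a residue computation.
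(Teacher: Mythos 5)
Your approach is the same as the paper's: use Lemma \ref{AuxLemma2}, sum over $y\ge 0$ by telescoping the geometric-type series $\sum_y \psi_y(w)/\lambda(y)$ on a large contour $\mathsf{C}_\lambda^{\ge R}$ where Assumption (UB) gives uniform geometric decay of $\psi_Y$, and then deform to $\mathsf{C}_0$ since the only pole of $p_x(w)/w^k$ is at the origin. This is precisely what the paper does (it invokes the display labelled (\ref{KeyEquality}) established in the proof of Lemma \ref{AuxLemma1}).

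However, your telescoping identity carries a sign error that you then paper over. You wrote
$\frac{1}{\lambda(y)}\psi_y(w) = \frac{1}{w}\bigl(\psi_{y-1}(w)-\psi_y(w)\bigr)$, but in fact $\psi_{y-1}(w)-\psi_y(w)=\psi_{y-1}(w)\bigl(1-\tfrac{\lambda(y)}{\lambda(y)-w}\bigr)=-\tfrac{w}{\lambda(y)-w}\psi_{y-1}(w)=-\tfrac{w}{\lambda(y)}\psi_y(w)$, so the correct identity is $\frac{1}{\lambda(y)}\psi_y(w) = -\frac{1}{w}\bigl(\psi_{y-1}(w)-\psi_y(w)\bigr)$. (Your claimed check at $y=0$, namely $\frac{1}{\lambda(0)-w}=\frac{1}{w}(1-\psi_0(w))$, is already false: the right-hand side equals $-\frac{1}{\lambda(0)-w}$.) Consequently $\sum_{y\ge 0}\frac{\psi_y(w)}{\lambda(y)}=-\frac{1}{w}$, not $+\frac{1}{w}$, which is exactly (\ref{KeyEquality}) specialized to $y=-1$. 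With the corrected sign, $\sum_{y\ge 0}\phi^{(k-1)}(y,x)=\frac{1}{2\pi\i}\oint_{\mathsf{C}_\lambda^{\ge R}}\frac{p_x(w)}{w^k}\,dw$ directly, with no extra minus sign to absorb. Your remark that ``the orientation/sign works out'' when passing from $\mathsf{C}_\lambda$ to $\mathsf{C}_0$ is not available as a rescue: both contours are counter-clockwise and enclose the unique pole at $w=0$, so the residue theorem gives equality with no sign change. The argument is fine once the telescoping sign is corrected.
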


We define a family of functions $\Phi_{\cdot}^{\cdot}(\cdot)$ on $\mathbb{Z}_+$, for $1\le n \le N$ (again we suppress dependence on the variable $t$ since it is fixed):
\begin{align}\label{PhiFunctionDefinition}
\Phi^n_{n-j}(x)=\frac{1}{2\pi \i} \oint_{\mathsf{C}_0}\frac{p_x(u)}{e^{-tu}u^{n-j+1}}du, j=1,2,\dots,n.
\end{align}

\begin{lem}\label{AuxLemma4} The functions $\Phi$ are biorthogonal to the $\Psi$'s. More precisely, for any $1\le n \le N$
\begin{align*}
\sum_{x\ge 0}^{}\Psi_i^n(x)\Phi_j^n(x)=\mathbf{1}(i=j)
\end{align*}
for $1\le i,j \le n-1$.
\end{lem}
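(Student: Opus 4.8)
The plan is to compute the sum $\sum_{x\ge 0}\Psi_i^n(x)\Phi_j^n(x)$ directly by substituting the contour-integral expressions from Lemma \ref{AuxLemma1} and display (\ref{PhiFunctionDefinition}), interchanging the (absolutely convergent) sum over $x$ with the two contour integrals, and carrying out the $x$-summation first. After plugging in, the summand over $x$ is (up to constants) $\frac{1}{\lambda(x)}\psi_x(w)p_x(u)$, and the key identity to establish is a summation formula of the shape
\begin{align*}
\sum_{x\ge 0}\frac{1}{\lambda(x)}\psi_x(w)p_x(u)=\frac{\textnormal{(something simple)}}{w-u},
\end{align*}
valid for $w$ on $\mathsf{C}_\lambda$ and $u$ on the small contour $\mathsf{C}_0$ (so that $|u|<|w-\lambda(k)|$-type inequalities make the geometric-like series converge). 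Concretely, since $\psi_x(w)=\prod_{k=0}^x \frac{\lambda(k)}{\lambda(k)-w}$ and $p_x(u)=\prod_{k=0}^{x-1}\frac{\lambda(k)-u}{\lambda(k)}$, the ratio of consecutive terms telescopes nicely: writing $T_x=\frac{1}{\lambda(x)}\psi_x(w)p_x(u)$ one finds $T_x$ has a product form in which the quotient $\frac{\lambda(k)-u}{\lambda(k)-w}$ appears, and one should be able to recognize $\sum_x T_x$ as an Abel-type summation that collapses. I expect the clean statement to be $\sum_{x\ge 0}\frac{1}{\lambda(x)}\psi_x(w)p_x(u)=\frac{1}{w-u}$ (the $\lambda$-independence of the answer being the point), which one proves either by the telescoping/Abel trick or by induction on partial sums, checking that the partial sum up to $X$ equals $\frac{1}{w-u}\bigl(1-\psi_X(w)p_{X+1}(u)\cdot\textnormal{const}\bigr)$ and that the remainder vanishes as $X\to\infty$ because $|u|$ is small and $w$ stays bounded away from the $\lambda(k)$.

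Once the $x$-sum is done, we are left with a double contour integral
\begin{align*}
\sum_{x\ge 0}\Psi_i^n(x)\Phi_j^n(x)=\frac{1}{(2\pi\i)^2}\oint_{\mathsf{C}_\lambda}dw\oint_{\mathsf{C}_0}du\; e^{-t(w-u)}\,\frac{w^{n-i}}{u^{n-j+1}}\cdot\frac{1}{w-u},
\end{align*}
up to sign bookkeeping. Now I would evaluate this iterated integral. Since $\mathsf{C}_0$ is a small circle around $0$ and $\mathsf{C}_\lambda$ encircles it, for fixed $w\in\mathsf{C}_\lambda$ the point $w$ lies outside $\mathsf{C}_0$, so the only pole of the $u$-integrand inside $\mathsf{C}_0$ is at $u=0$. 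Computing that residue (a Cauchy-type coefficient extraction from $e^{tu}\frac{w^{n-i}}{w-u}\cdot u^{-(n-j+1)}$) gives a polynomial-times-$e^{-tw}$ expression in $w$ of degree at most $n-i$ in the relevant sense; then the $w$-integral over $\mathsf{C}_\lambda$ — which encircles $0$ — picks out the residue at $w=0$. The combined double-residue computation should produce a finite sum that telescopes to $\mathbf{1}(i=j)$ for $1\le i,j\le n-1$; the constraint $j\le n-1$ is exactly what guarantees $n-j+1\ge 2$ so that there genuinely is a pole at $u=0$ and the bookkeeping of powers of $w$ versus $u$ works out. An equivalent and perhaps cleaner route: expand $\frac{1}{w-u}=\sum_{m\ge 0}u^m/w^{m+1}$ (valid since $|u|<|w|$ on these contours), so the double integral factorizes as $\sum_{m\ge 0}\bigl(\frac{1}{2\pi\i}\oint_{\mathsf{C}_\lambda}w^{n-i-m-1}e^{-tw}dw\bigr)\bigl(\frac{1}{2\pi\i}\oint_{\mathsf{C}_0}u^{m-n+j-1}e^{tu}du\bigr)$, and each factor is a single Taylor/residue coefficient; matching them forces $m=n-j$ from the second factor (as $\mathsf{C}_0$ only sees $u=0$) and then the first factor becomes $\frac{1}{2\pi\i}\oint_{\mathsf{C}_\lambda}w^{j-i-1}e^{-tw}dw$, which is the residue at $w=0$ and equals $\mathbf{1}(i=j)$ precisely when $i\le j$ (and is zero when $i>j$ since $w^{j-i-1}$ is then analytic at $0$... wait, one must be careful: when $i>j$ the exponent $j-i-1\le -2$ gives a higher-order pole, but $e^{-tw}$ contributes and the residue is $\frac{(-t)^{i-j}}{(i-j)!}$, not obviously zero).

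This last point is the main obstacle I anticipate: the naive geometric expansion gives $\mathbf{1}(i=j)$ only on the "diagonal and above," and one must handle the case $i>j$ carefully — there the remaining $w$-integral $\frac{1}{2\pi\i}\oint_{\mathsf{C}_\lambda}w^{j-i-1}e^{-tw}dw$ is nonzero as a residue at $0$. The resolution must be that $\mathsf{C}_\lambda$ does \emph{not} in fact enclose $0$ for the purposes of this computation, or that there is a compensating contribution I am glossing over: indeed the correct summation formula for $\sum_x \frac{1}{\lambda(x)}\psi_x(w)p_x(u)$ is likely not simply $\frac{1}{w-u}$ but $\frac{1}{w-u}$ times a ratio that contributes an extra pole, OR the relevant identity is $\psi_x(w)=1/p_{x+1}(w)$ which must be used to rewrite one factor so that the $x$-sum telescopes to exactly $\frac{1}{w-u}$ with the pole genuinely only at $w=u$ and the contour $\mathsf{C}_\lambda$ to be taken \emph{not} around $0$ here (the Remark after (\ref{SpectralExpansion}) explicitly notes $\mathsf{C}_\lambda$ need only encircle the $\lambda(x)$'s). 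With $\mathsf{C}_\lambda$ chosen to encircle the $\lambda(x)$'s but \emph{not} $0$ and not $\mathsf{C}_0$, the $w$-integrand $\frac{w^{n-i}e^{-tw}}{u^{n-j+1}(w-u)}$ is analytic inside $\mathsf{C}_\lambda$ away from... hmm, it has no poles there at all, giving $0$ — so in fact the right move is to deform: the double integral equals (sum of residues), and pushing the $w$ contour appropriately converts it into the single-variable residue at $w=u$, after which the $u$-integral around $\mathsf{C}_0$ picks out $u=0$ and yields $\frac{1}{2\pi\i}\oint_{\mathsf{C}_0} u^{j-i-1}du=\mathbf{1}(i=j)$ on the nose. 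Getting these contour choices and deformations exactly right — tracking which contour encircles which, and justifying the interchange of summation and integration via the \textsf{(UB)} bounds — is where the real care is needed; the algebraic identities themselves are routine once the analytic setup is pinned down. I would model the argument closely on the analogous biorthogonality verification in \cite{BorodinFerrari}, adapting the translation-invariant kernels there to the present $\lambda$-dependent functions $\psi_x, p_x$.
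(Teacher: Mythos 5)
Your strategy — interchange the $x$-sum with both contour integrals, evaluate $\sum_{x\ge 0}\lambda(x)^{-1}\psi_x(w)p_x(u)$ in closed form (it telescopes to $1/(u-w)$, not $1/(w-u)$, though the sign is absorbed by the leading $-\lambda(x)^{-1}$ in $\Psi$), and then compute the resulting double contour integral — is a legitimate and genuinely different route from the paper's. The paper avoids the double integral entirely: noting that $p_l(w)$ is a polynomial of degree $l$ in $w$, it rewrites the needed identity as $\sum_{x\ge 0}e^{t\mathsf{L}}(l,x)\,p_x(u)=e^{-tu}p_l(u)$, which holds because $x\mapsto p_x(u)$ is an eigenfunction of $\mathsf{L}$ with eigenvalue $-u$; after this the biorthogonality collapses to the single residue $\frac{1}{2\pi\i}\oint_{\mathsf{C}_0}u^{i-j-1}du=\mathbf{1}(i=j)$. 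The interchange of sum and integral there is justified by a Weierstrass M-test using $(\mathsf{UB})$, which your sketch gestures at but does not carry out.

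The concrete gap in your execution lies in the step ``matching them forces $m=n-j$'' after expanding $\frac{1}{w-u}=\sum_m u^m/w^{m+1}$. This is false: because of the factor $e^{tu}$, the residue at $u=0$ of $u^{m-j-1}e^{tu}$ equals $t^{j-m}/(j-m)!$ for \emph{every} $m\le j$, not just $m=j$; symmetrically the $w$-residue of $w^{i-m-1}e^{-tw}$ is $(-t)^{m-i}/(m-i)!$ for every $m\ge i$. Summing over the full range $i\le m\le j$ and applying the binomial theorem gives $\frac{1}{(j-i)!}(t-t)^{j-i}=\mathbf{1}(i=j)$, and for $i>j$ the index range is simply empty, so the value is zero with no residual term. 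Your worry about a ``nonzero residue $(-t)^{i-j}/(i-j)!$'' when $i>j$ is an artefact of retaining only a single term of the $m$-sum; once all contributing $m$ are kept the cancellation is automatic. Consequently, your subsequent speculation that $\mathsf{C}_\lambda$ should not enclose $0$ is a red herring — it does enclose $0$, exactly as in the statement of Theorem \ref{MainTheorem}, and the computation closes with no change of contours.
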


\begin{lem}\label{AuxLemma5}
The functions $\{ \Phi_j^n(\cdot); j=0,\dots,n-1\}$ span the space (see Proposition \ref{EynardMehta}):
\begin{align}
V_n=span\big\{\phi^{(1)}(\mathsf{virt},\cdot),\phi^{(2)}(\mathsf{virt},\cdot),\dots,\phi^{(n)}(\mathsf{virt},\cdot) \big\},
\end{align}
for $1\le n \le N$.
\end{lem}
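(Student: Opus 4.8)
The plan is to show that each function $\Phi^n_{n-j}$, $j=1,\dots,n$, is an explicit \emph{triangular} (unit‑diagonal) combination of $\phi^{(1)}(\mathsf{virt},\cdot),\dots,\phi^{(n)}(\mathsf{virt},\cdot)$; the equality of spans then follows by pure linear algebra. First I would expand the exponential inside the contour integral in the definition (\ref{PhiFunctionDefinition}). Writing $e^{tu}=\sum_{l\ge 0}t^lu^l/l!$ and interchanging the sum with the integral — legitimate since $\mathsf{C}_0$ is compact and the series converges uniformly on it — one gets
\[
\Phi^n_{n-j}(x)=\sum_{l\ge 0}\frac{t^l}{l!}\,\frac{1}{2\pi \i}\oint_{\mathsf{C}_0}\frac{p_x(u)}{u^{\,n-j+1-l}}\,du .
\]
By Lemma \ref{AuxLemma3}, the $l$‑th integral equals $\phi^{(n-j+1-l)}(\mathsf{virt},x)$ whenever $n-j+1-l\ge 1$, and it vanishes when $n-j+1-l\le 0$, because then the integrand $p_x(u)\,u^{\,l-n+j-1}$ is a polynomial in $u$ with no pole at the origin. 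Hence the sum is finite and
\[
\Phi^n_{n-j}=\sum_{k=1}^{\,n-j+1}\frac{t^{\,n-j+1-k}}{(n-j+1-k)!}\,\phi^{(k)}(\mathsf{virt},\cdot),\qquad j=1,\dots,n .
\]
In particular every $\Phi^n_j$ with $0\le j\le n-1$ lies in $V_n=\mathrm{span}\{\phi^{(1)}(\mathsf{virt},\cdot),\dots,\phi^{(n)}(\mathsf{virt},\cdot)\}$.

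To conclude I would read off from the last display the matrix expressing $(\Phi^n_{0},\Phi^n_{1},\dots,\Phi^n_{n-1})$ in terms of $(\phi^{(1)}(\mathsf{virt},\cdot),\dots,\phi^{(n)}(\mathsf{virt},\cdot))$: it is lower triangular, since $\Phi^n_{n-j}$ only involves $\phi^{(k)}(\mathsf{virt},\cdot)$ with $k\le n-j+1$, and its diagonal entries are all $t^0/0!=1$ (the coefficient of $\phi^{(n-j+1)}(\mathsf{virt},\cdot)$ in $\Phi^n_{n-j}$). Being unipotent it is invertible, so the $n$ functions $\{\Phi^n_j:0\le j\le n-1\}$ span the same subspace as $\{\phi^{(k)}(\mathsf{virt},\cdot):1\le k\le n\}$, namely $V_n$. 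As a sanity check that $V_n$ really is $n$‑dimensional, one may restrict the $\phi^{(k)}(\mathsf{virt},\cdot)$ to $x\in\{0,\dots,n-1\}$ and note that, since $p_x$ has degree $x$ with $p_x(0)=1$, Lemma \ref{AuxLemma3} identifies $\phi^{(k)}(\mathsf{virt},x)$ with the coefficient $[w^{k-1}]p_x(w)$, so the matrix $\big([w^{k-1}]p_x(w)\big)_{1\le k\le n,\ 0\le x\le n-1}$ is triangular with nonzero diagonal.

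I do not anticipate a genuine obstacle here: the term‑by‑term integration and the vanishing of the negative‑index terms are routine, and the spanning statement reduces to invertibility of a unipotent triangular matrix. The only point that deserves an explicit sentence is the identification, in the level‑homogeneous case, of the abstract space $V_n$ of Proposition \ref{EynardMehta} with $\mathrm{span}\{\phi^{(k)}(\mathsf{virt},\cdot):1\le k\le n\}$: this is just the unwinding of the basis $\{(\phi_0*\phi^{(1,n)})(x_1^0,\cdot),\dots,\phi_{n-1}(x_n^{n-1},\cdot)\}$ using $\phi_k\equiv\phi$, $x_k^{k-1}=\mathsf{virt}$, and Lemma \ref{AuxLemma3}. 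Combined with the biorthogonality already proved in Lemma \ref{AuxLemma4}, this completes the verification that the $\Phi^n_j$ are exactly the functions singled out in Proposition \ref{EynardMehta}.
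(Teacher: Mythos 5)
Your proof is correct and is essentially the paper's argument in a mildly different dress: the paper reads off $\phi^{(k)}(\mathsf{virt},x)$ and $\Phi_j^n(x)$ via the Cauchy integral formula as $\frac{1}{(k-1)!}\frac{d^{k-1}}{dw^{k-1}}p_x(w)\big|_{w=0}$ and $\frac{1}{j!}\frac{d^{j}}{dw^{j}}\left(e^{tw}p_x(w)\right)\big|_{w=0}$ respectively, and then applies the Leibniz rule, whereas you expand $e^{tu}$ into its power series and integrate term by term against Lemma~\ref{AuxLemma3} — both yield the identical unipotent triangular change of basis $\Phi^n_{n-j}=\sum_{k=1}^{n-j+1}\frac{t^{n-j+1-k}}{(n-j+1-k)!}\,\phi^{(k)}(\mathsf{virt},\cdot)$. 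Your closing observations on the dimension of $V_n$ and its identification with the abstract span in Proposition~\ref{EynardMehta} are correct but not strictly needed for the lemma.
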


Finally, it is clear that we have the following.

\begin{lem}
For $n=1,\dots,N-1$:
\begin{align*}
\phi(\mathsf{virt},\cdot)=\Phi_0^{(n+1)}(\cdot)\equiv 1.
\end{align*}
\end{lem}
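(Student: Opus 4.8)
The plan is to verify the two asserted identities in sequence: first that $\phi(\mathsf{virt},\cdot)\equiv 1$, and then that $\Phi_0^{(n+1)}(\cdot)\equiv 1$ for $n=1,\dots,N-1$; together these give the hypothesis $\phi_n(x_{n+1}^n,x)=c_n\Phi_0^{(n+1)}(x)$ with $c_n=1$ needed to invoke Proposition \ref{EynardMehta}.

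For the first identity, I would simply recall the definition of $\phi_k$ from display just before Lemma \ref{AuxLemma1}: for $y=\mathsf{virt}$ one has $\phi_k(\mathsf{virt},x)=1$ by definition, independently of $x$. So $\phi(\mathsf{virt},\cdot)\equiv 1$ is immediate and requires no computation.

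For the second identity, I would take $j=n+1$ in the definition (\ref{PhiFunctionDefinition}) of the functions $\Phi$, which gives
\begin{align*}
\Phi_0^{(n+1)}(x)=\frac{1}{2\pi \i}\oint_{\mathsf{C}_0}\frac{p_x(u)}{e^{-tu}u}\,du=\frac{1}{2\pi \i}\oint_{\mathsf{C}_0}\frac{p_x(u)e^{tu}}{u}\,du.
\end{align*}
Now $p_x(u)$ is a polynomial in $u$ and $e^{tu}$ is entire, so $p_x(u)e^{tu}$ is holomorphic inside and on $\mathsf{C}_0$ (a small contour around $0$); hence the integral equals the value of $p_x(u)e^{tu}$ at $u=0$ by the residue theorem. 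Since $p_x(0)=1$ (noted in the remark after the definition of $p_x$) and $e^{t\cdot 0}=1$, we get $\Phi_0^{(n+1)}(x)=1$ for every $x\in\mathbb{Z}_+$, as claimed.

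There is really no main obstacle here — this lemma is genuinely a matter of reading off definitions and applying the residue theorem once, which is why the excerpt prefaces it with ``it is clear that.'' The only point requiring the slightest care is lining up the index convention in (\ref{PhiFunctionDefinition}) so that the subscript $0$ corresponds to $j=n+1$ and produces exactly the factor $u^{-(n+1-j+1)}=u^{-1}$, leaving a simple pole at the origin whose residue is $p_x(0)e^{0}=1$.
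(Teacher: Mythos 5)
Your proof is correct and is exactly the reasoning the paper leaves implicit when it writes ``it is clear that'': the first identity is the definition of $\phi_k(\mathsf{virt},\cdot)$, and the second follows from the case $n-j=0$ of display (\ref{PhiFunctionDefinition}), a single application of Cauchy's integral formula, and the normalization $p_x(0)=1$. Your bookkeeping on the indices (superscript $n+1$, $j=n+1$, giving $u^{-1}$ in the integrand) lines up correctly with the paper's conventions.
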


Assuming the auxiliary results above we first prove Theorem \ref{MainTheorem}. The proofs of these results are given afterwards.

\begin{proof}[Proof of Theorem \ref{MainTheorem}]
Making use of Proposition \ref{EynardMehta} (by virtue of the preceding auxiliary lemmas) we get that:
\begin{align*}
\mathsf{K}_t(n_1,x_1;n_2,x_2)=-\phi^{(n_1,n_2)}(x_1,x_2)+\sum_{k=1}^{n_2}\Psi^{n_1}_{n_1-k}(x_1)\Phi_{n_2-k}^{n_2}(x_2).
\end{align*}
The term $\phi^{(n_2-n_1)}(x_1,x_2)$ is given by, from Lemma \ref{AuxLemma2}:
\begin{align*}
\phi^{(n_2-n_1)}(x_1,x_2)=-\frac{1}{\lambda(x_1)} \frac{1}{2\pi \i} \oint_{\mathsf{C_\lambda}}\psi_{x_1}(w)\frac{p_{x_2}(w)}{w^{n_2-n_1}}dw\mathbf{1}(n_2>n_1).
\end{align*}
It then remains to simplify the sum:
\begin{align*}
\sum_{k=1}^{n_2}\Psi^{n_1}_{n_1-k}(x_1)\Phi_{n_2-k}^{n_2}(x_2)&=-\frac{1}{\lambda(x_1)}\sum_{k=1}^{n_2}\frac{1}{(2\pi \i)^2} \oint_{\mathsf{C}_\lambda}\psi_{x_1}(w)w^{n_1-k}e^{-tw}dw\oint_{\mathsf{C}_0}\frac{p_{x_2}(u)}{e^{-tu}u^{n_2-k+1}}du\\
&=-\frac{1}{\lambda(x_1)}\frac{1}{(2\pi \i)^2}\oint_{\mathsf{C}_\lambda}dw\oint_{\mathsf{C}_0}du\psi_{x_1}(w)p_{x_2}(u)e^{-t(w-u)}\sum_{k=1}^{n_2}\frac{w^{n_1-k}}{u^{n_2-k+1}}\\
&=-\frac{1}{\lambda(x_1)}\frac{1}{(2\pi \i)^2}\oint_{\mathsf{C}_\lambda}dw\oint_{\mathsf{C}_0}du\psi_{x_1}(w)p_{x_2}(u)e^{-t(w-u)}\frac{w^{n_1-n_2}}{u^{n_2}}\frac{w^{n_2}-u^{n_2}}{w-u}.
\end{align*}
\end{proof}

\begin{proof}[Proof of Lemma \ref{AuxLemma1}]
It suffices to show that:
\begin{align*}
\left(\phi*\Psi_{N-j}^N\right)(y)=\Psi_{N-1-j}^{N-1}(y), \ y \in \mathbb{Z}_+.
\end{align*}
It is in fact equivalent to prove that:
\begin{align}\label{AuxLem1Eq1}
\sum_{x\ge 0}\left(-\frac{1}{\lambda(y)}\right)\mathbf{1}\left(y< x\right)\left(-\frac{1}{\lambda(x)}\right)\frac{1}{2\pi \i} \oint_{\mathsf{C}_{\lambda}}\psi_x(w)w^{i-1}e^{-tw}dw=-\frac{1}{\lambda(y)}\frac{1}{2\pi \i} \oint_{\mathsf{C}_{\lambda}}\psi_y(w)w^{i-2}e^{-tw}dw.
\end{align}
For any $R>1$, we consider a counter clockwise contour $\mathsf{C}_{\lambda}^{\ge R}$ that contains $0$ and $\{\lambda(x)\}_{x\ge0}$ and for which the following uniform bound holds:
\begin{align}
\sup_{w \in \mathsf{C}_{\lambda}^{\ge R}}\sup_{x \in \mathbb{Z}_+}\bigg|\frac{\lambda(x)}{\lambda(x)-w}\bigg|\le \frac{1}{R}.
\end{align}
Such a contour exists because of assumption $(\mathsf{UB})$; we can simply take a very large circle.

Clearly, the left hand side of display (\ref{AuxLem1Eq1}) is equal to (since we can deform the contour $\mathsf{C}_{\lambda}$ to $\mathsf{C}_{\lambda}^{\ge R}$ without crossing any poles):
\begin{align*}
-\frac{1}{\lambda(y)}\frac{1}{2\pi \i}\sum_{x>y}\left(-\frac{1}{\lambda(x)}\right) \oint_{\mathsf{C}_{\lambda}^{\ge R}}\psi_x(w)w^{i-1}e^{-tw}dw.
\end{align*}
We now claim that uniformly for $w \in \mathsf{C}_{\lambda}^{\ge R}$:
\begin{align}\label{KeyEquality}
-\sum_{x>y}^{}\frac{\psi_x(w)}{\lambda(x)}=\frac{\psi_y(w)}{w}.
\end{align}
Assuming this, display (\ref{AuxLem1Eq1}) immediately follows and thus also the statement of the lemma.

Now, after a simple relabelling (more precisely by writing $\mathsf{a}_i=\lambda(y+i+1)$) in order to establish the claim it suffices to prove the following result. Let $\{\mathsf{a}_i \}_{i\ge 0}$ be a sequence of numbers in $[s,M]$ and let $\mathsf{C}_{\mathsf{a}}^{\ge R}$ be the contour defined above. Then, uniformly for $w \in \mathsf{C}_{\mathsf{a}}^{\ge R}$ we have:
\begin{align*}
\sum_{l=0}^{\infty}\frac{1}{\mathsf{a}_l}\prod_{i=0}^{l}\frac{\mathsf{a}_i}{\mathsf{a}_i-w}=-\frac{1}{w}.
\end{align*}
Observe that, if all the $\{\mathsf{a}_i \}_{i\ge 0}$ are equal this is just a geometric series. We claim that we have the following key identity for finite $k$:
\begin{align*}
\sum_{l=0}^{k}\frac{1}{\mathsf{a}_l}\prod_{i=0}^{l}\frac{\mathsf{a}_i}{\mathsf{a}_i-w}=-\frac{1}{w}\left(1-\prod_{l=0}^k\frac{\mathsf{a}_l}{\mathsf{a}_l-w}\right).
\end{align*}
This is a consequence (by induction) of the trivial to check equality:
\begin{align*}
-\frac{1}{w}\left(1-\prod_{l=0}^k\frac{\mathsf{a}_l}{\mathsf{a}_l-w}\right)+\frac{1}{\mathsf{a}_{k+1}}\prod_{l=0}^{k+1}\frac{\mathsf{a}_l}{\mathsf{a}_l-w}=-\frac{1}{w}\left(1-\prod_{l=0}^{k+1}\frac{\mathsf{a}_l}{\mathsf{a}_l-w}\right).
\end{align*}
Since the contour $\mathsf{C}_{\mathsf{a}}^{\ge R}$ was chosen so that for any $k\ge 1$:
\begin{align*}
\sup_{w \in \mathsf{C}_{\mathsf{a}}^{\ge R}}\prod_{l=0}^k\bigg|\frac{\mathsf{a}_l}{\mathsf{a}_l-w}\bigg|\le \frac{1}{R^{k+1}}
\end{align*}
with $R>1$, the result readily follows.
\end{proof}

\begin{proof}[Proof of Lemma \ref{AuxLemma2}]
We prove this by induction on $k$. For the base case $k=1$ it suffices to observe that:
\begin{align*}
\frac{1}{2\pi \i} \oint_{\mathsf{C}_{\lambda}}\frac{\psi_y(w)p_x(w)}{w}dw=\mathbf{1}(x>y), \ x,y \in \mathbb{Z}_+.
\end{align*}
For the inductive step, we first compute:
\begin{align*}
\phi^{(k+1)}(y,x)=-\frac{1}{\lambda(y)}\sum_{z>y}^{}-\frac{1}{\lambda(z)}\frac{1}{2\pi \i}\oint_{\mathsf{C}_{\lambda}}\frac{\psi_z(w)p_x(w)}{w^k}dw.
\end{align*}
We can then deform the contour $\mathsf{C}_{\lambda}$ to $\mathsf{C}_{\lambda}^{\ge R}$ as in the proof of Lemma \ref{AuxLemma1} and use (\ref{KeyEquality}) to conclude.
\end{proof}

\begin{proof}[Proof of Lemma \ref{AuxLemma3}]
We make use of Lemma \ref{AuxLemma2} and apply the same arguments as in the proof of Lemma \ref{AuxLemma1} to compute:
\begin{align*}
\phi^{(k)}(\mathsf{virt},x)&=\sum_{y\ge 0}^{}\phi^{(k-1)}(y,x)=\sum_{y\ge 0}^{}-\frac{1}{\lambda(y)} \frac{1}{2\pi \i} \oint_{\mathsf{C_\lambda}}\psi_y(w)\frac{p_x(w)}{w^{K-1}}dw\\
&=\sum_{y\ge 0}^{}-\frac{1}{\lambda(y)} \frac{1}{2\pi \i} \oint_{\mathsf{C}^{\ge R}_\lambda}\psi_y(w)\frac{p_x(w)}{w^{k-1}}dw=\frac{1}{2\pi \i}\oint_{\mathsf{C}^{\ge R}_\lambda}\frac{p_x(w)}{w^{k}}dw=\frac{1}{2\pi \i}\oint_{\mathsf{C}_0}\frac{p_x(w)}{w^{k}}dw.
\end{align*}
\end{proof}

\begin{proof}[Proof of Lemma \ref{AuxLemma4}]
We first write using the explicit expression:
\begin{align}\label{AuxLem4Eq1}
\sum_{x\ge 0}^{}\Psi_i^n(x)\Phi_j^n(x)=\sum_{x\ge 0}\left(-\frac{1}{\lambda(x)}\right)\frac{1}{2\pi \i} \oint_{\mathsf{C}_{\lambda}}\psi_x(w)w^{i}e^{-tw}dw\frac{1}{2\pi \i} \oint_{\mathsf{C}_0}\frac{p_x(u)}{e^{-tu}u^{j+1}}du.
\end{align}
We claim that for any $l\in \mathbb{Z}_+$, uniformly for $u \in \mathfrak{K}$, where $\mathfrak{K}$ is an arbitrary compact neighbourhood of the origin, we have:
\begin{align*}
\sum_{x\ge 0}^{}p_x(u)\left(-\frac{1}{\lambda(x)}\right)\frac{1}{2\pi \i}\oint_{\mathsf{C}_{\lambda}}\psi_x(w)w^le^{-tw}dw=e^{-tu}u^l.
\end{align*}
Then, (\ref{AuxLem4Eq1}) becomes
\begin{align*}
\frac{1}{2\pi \i}\oint_{\mathsf{C}_0}e^{tu}\frac{1}{u^{j+1}}e^{-tu}u^idu=\mathbf{1}(i=j).
\end{align*}
In order to establish the claim, it is equivalent (by taking finite linear combinations, since $p_x(\cdot)$ is a polynomial of degree $x$) to prove that for any $l\in \mathbb{Z}_+$, uniformly for $u\in \mathfrak{K}$ (an arbitrary compact neighbourhood of the origin):
\begin{align*}
\sum_{x\ge 0}^{}p_x(u)\left(-\frac{1}{\lambda(x)}\right)\frac{1}{2\pi \i}\oint_{\mathsf{C}_{\lambda}}\psi_x(w)p_l(w)e^{-tw}dw&=e^{-tu}p_l(u),\\
\sum_{x\ge 0} e^{t\mathsf{L}}(l,x)p_x(u)&=e^{-tu}p_l(u).
\end{align*}
Now, observe that for fixed $u$, the function $x\mapsto p_x(u)$ is an eigenfunction, with eigenvalue $-u$, of the generator $\mathsf{L}$:
\begin{align*}
\mathsf{L}_xp_x(u)=\lambda(x)\left[p_{x+1}(u)-p_x(u)\right]=p_x(u)\lambda(x)\left[\frac{\lambda(x)-u}{\lambda(x)}-1\right]=-up_x(u), \ x \in \mathbb{Z}_+.
\end{align*}
Thus, for $u$ fixed we have:
\begin{align*}
\left[e^{t\mathsf{L}}p_{\cdot}(u)\right](l)=e^{-tu}p_l(u), \ l \in \mathbb{Z}_+.
\end{align*}
Now, in order to show that the convergence is uniform for $u \in \mathfrak{K}$ we proceed as follows. We first estimate:
\begin{align*}
\sup_{u\in \mathfrak{K}}|p_x(u)|=\sup_{u\in \mathfrak{K}} \prod_{k=0}^{x-1}\bigg|1-\frac{u}{\lambda(k)}\bigg| \le \mathsf{const}(s,\mathfrak{K})^x, \ x \in \mathbb{Z}_+.
\end{align*}
On the other hand, making use of the spectral expansion (\ref{SpectralExpansion}), for any $l\in \mathbb{Z}_+$, we have the following bound, where $R$ can be picked arbitrarily large: 
\begin{align*}
|e^{t\mathsf{L}}(l,x)|= \bigg|-\frac{1}{\lambda(x)}\frac{1}{2\pi \i} \oint_{\mathsf{C}^{\ge R}_{\lambda}}\psi_x(w)p_l(w)e^{-tw}dw\bigg|\le C \frac{1}{R^{x}}, \ x \in \mathbb{Z}_+.
\end{align*}
Here $C$ denotes a generic constant independent of $x$ (we suppress dependence of $C$ on $l$). We pick $R$ large enough so that:
\begin{align*}
\eta\overset{\textnormal{def}}{=}\frac{\mathsf{const}(s,\mathfrak{K})}{R}<1.
\end{align*}
Then using the Weirstrass M-test, since for all $x \in \mathbb{Z}_+$
\begin{align*}
\sup_{u\in \mathfrak{K}}|e^{t\mathsf{L}}(l,x)p_x(u)|\le C \eta^x, \ \textnormal{where} \ \eta <1,
\end{align*}
we get that, for any $l \in \mathbb{Z}_+$:
\begin{align*}
\sum_{x\ge 0}^{}e^{t\mathsf{L}}(l,x)p_x(u)=e^{-tu}p_l(u), \ \textnormal{ uniformly for } u \textnormal{ on compacts } \mathfrak{K},
\end{align*}
as required.
\end{proof}

\begin{proof}[Proof of Lemma \ref{AuxLemma5}]
Let $1\le n \le N$. Using the Cauchy integral formula we see that, for $1\le k \le n$:
\begin{align*}
\phi^{(k)}(\mathsf{virt},x)=\frac{1}{(k-1)!}\frac{d^{k-1}}{dw^{k-1}}p_x(w)\bigg|_{w=0}.
\end{align*}
On the other hand, again using the Cauchy integral formula, we have for $j=0,\dots,n-1$:
\begin{align*}
\Phi_j^n(x)=\frac{1}{j!}\frac{d^{j}}{dw^{j}}\left(e^{tw}p_x(w)\right)\bigg|_{w=0}=\sum_{i=0}^{j}c_i^j\frac{d^{i}}{dw^{i}}p_x(w)\bigg|_{w=0},
\end{align*}
with $c_j^j\neq 0$. Thus, it is immediate that
\begin{align*}
span\bigg\{\Phi_0^n(\cdot),\Phi_1^n(\cdot),\dots,\Phi_{n-1}^n(\cdot)\bigg\}=V_n,
\end{align*}
as desired.
\end{proof}

\begin{rmk}\label{LevelInhomogeneousRemark5}
The computation of the correlation kernel in the level inhomogeneous setting is more complicated and notationally cumbersome and we do not pursue the details here. We simply record the key ingredient for computing the iterated convolutions as contour integrals. This is the analogue of (and in fact follows from) display (\ref{KeyEquality}): 
\begin{align*}
\sum_{z>y}^{}\frac{1}{\lambda(z)+\alpha_{n+1}}\mathsf{h}_{\alpha_n}^{\alpha_{n+1}}(z)\psi_z\left(w;\lambda(\cdot)+\alpha_{n+1}\right)&=\sum_{z>y}^{}\frac{1}{\lambda(z)+\alpha_{n}}\prod_{l=0}^{z}\frac{\lambda(l)+\alpha_n}{\lambda(l)+\alpha_n-\left(w+\alpha_n-\alpha_{n+1}\right)}\\
&=-\frac{1}{w+\alpha_n-\alpha_{n+1}}\psi_y(w+\alpha_n-\alpha_{n+1};\lambda(\cdot)+\alpha_n),
\end{align*}
holding uniformly for $w$ on some large contour $\mathsf{C}_{\lambda,\alpha_n,\alpha_{n+1}}^{\ge R}$.
\end{rmk}

\bigskip
\noindent
{\sc Mathematical Institute, University of Oxford, Oxford, OX2 6GG, UK.}\newline
\href{mailto:theo.assiotis@maths.ox.ac.uk}{\small theo.assiotis@maths.ox.ac.uk}

\end{document}